\newcommand{\Ch}{\mathrm{Ch}}
\newcommand{\CuP}{\mathop{\mbox{\large $\cup$}}}
\newcommand{\ssum}{\mathop{\textstyle \sum}}
\newcommand{\mcl}{\mathrm{cl}}
\newcommand{\A}{\mathcal{A}}
\newcommand{\B}{\mathcal{B}}
\newcommand{\rr}{\mathrm{r}}
\newtheorem{theorem}{Theorem}[section]
\newtheorem{lemma}[theorem]{Lemma}
\newtheorem{cor}[theorem]{Corollary}
\newtheorem{quest}[theorem]{Question}
\theoremstyle{definition}
\newtheorem{definition}[theorem]{Definition}
\newtheorem{example}[theorem]{Example}
\theoremstyle{remark}
\numberwithin{equation}{section}
\begin{document}
\title[]
{Multiplicatively spectrum-preserving and norm-preserving 
maps between invertible groups of 
commutative Banach algebras}

\author[O.~Hatori]{Osamu~Hatori}
\address{Department of Mathematics, Faculty of Science, 
Niigata University, Niigata 950-2181 Japan}
\curraddr{}
\email{hatori@math.sc.niigata-u.ac.jp}
\thanks{The authors were partly 
supported by the Grants-in-Aid for Scientific 
Research, The 
Ministry of Education, Science, Sports and Culture, Japan.}

\author[T.~Miura]{Takeshi Miura}
\address{Department of Basic Technology,
Applied Mathematics and Phisics, 
Yamagata University, Yonezawa 992-8510 Japan}
\curraddr{}
\email{miura@yz.yamagata-u.ac.jp}

\author[H.~Takagi]{Hiroyuki Takagi}
\address{Department of Mathematical Sciences,
Faculty of Science, Shinshu University, Matsumoto 390-8621 Japan}
\curraddr{}
\email{takagi@math.shinshu-u.ac.jp}

\subjclass[2000]{46J10,47B48}
\keywords{Banach algebras, isomorphisms, spectrum-preserving maps, 
norm-preserving maps}
\date{}

\begin{abstract}
Let $A$ and $B$ be unital semisimple commutative Banach algebras and 
$T$ a map from the invertible group $A^{-1}$ onto $B^{-1}$. 
Linearity and multiplicativity of the map are not assumed. 
We consider the hypotheses on $T$: 
(1) $\sigma (TfTg)=\sigma (fg)$; 
(2) $\sigma_{\pi}(TfTg-\alpha)\cap \sigma_{\pi}(fg-\alpha )\ne \emptyset$; 
(3) $\rr (TfTg-\alpha )=\rr(fg-\alpha )$ hold for 
some non-zero complex number $\alpha$ and for every $f, g\in A^{-1}$, where 
$\sigma (\cdot )$ (resp. $\sigma_{\pi}(\cdot )$) denotes 
the (resp. peripheral) spectrum and $\rr(\cdot)$ denotes the spectral 
radius. Under each of the hypotheses we show representations 
for $T$ and under additional 
assumptions we show that $T$ is extended to an 
algebra isomorphism. In particular, 
if $T$ is a surjective group homomorphism 
such that $T$ preserves the spectrum or $T$ is a surjective isometry 
with respect to the spectral radius, then 
$T$ is extended to an algebra isomorphism.
Similar results holds for maps from $A$ onto $B$.
\end{abstract}

\maketitle

\section{Introduction}
Recently spectrum-preserving maps on Banach algebras 
which are not assumed to be linear are studied by several authors 
including
\cite{hmt1,hmt2,h,l,lt,m1,ps,ra,rr1,rr2}. 
In this paper we mainly consider 
maps which are defined on the invertible groups of commutative Banach 
algebras. 

For unital Banach algebras the groups of all invertible elements 
can be 
isomorphic 
as groups  to each other 
while these Banach algebras are not algebraically 
isomorphic to each other. Let  $C([0,1])$ be the Banach algebra of 
all continuous complex-valued functions on the closed unit interval. 
Then the group $C([0,1])^{-1}$ of all invertible elements of $C([0,1])$ 
is isomorphic as a group to the group of all non-zero complex numbers, 
the invertible group of the one dimensional Banach algebra ${\mathbb C}$; 
the complex number field. We show a proof for a convenience. Let 
$\{z_{\alpha}\}$ be a Hamel basis for the linear space ${\mathbb C}$ 
over the rational number field. We may assume that $\pi\in \{z_{\alpha}\}$. 
Since the cardinal number of $C([0,1])$ is the continuum ${\mathfrak c}$, 
the cardinal number of a Hamel basis for $C([0,1])$ over the rational 
number field is also the continuum. It follows that there is a 
linear isomorphism $S$ (over the rational number field) from ${\mathbb C}$ 
onto $C([0,1])$ such that $S(\pi)=\pi$. Put the map $T$ from $\exp{\mathbb C}$ 
onto $\exp(C([0,1]))$ as $T(\exp(z))=\exp(S(z))$: $T$ is well-defined 
since $S(\pi)=\pi$. By the definition $T$ is 
multiplicative. Since the equality 
$\exp (C([0,1]))=C([0,1])^{-1}$ holds (by a theorem of Arens and Royden 
\cite[Corollary III. 7. 4]{ga}, for example), we see that
$T$ is a group isomorphism from $\{z\in {\mathbb C}:
z\ne0\}$ onto $C([0,1])^{-1}$. 
In the same way there are various pairs of 
unital commutative Banach algebras which are not isomorphic as algebras 
while the invertible groups are isomorphic as groups. 
An interesting example of a group isomorphism between 
invertible groups of two non-isomorphic commutative $C^*$-algebras 
which is also a homeomorphism with respect to the relative topology 
induced by the norms on the algebras is presented in the monograph of 
\. Zelazko \cite[Remark 1. 7. 8]{ze}.

In spite of the above Hochwald \cite{h} proved that if a group homomorphism 
from the group of all invertible matrices in 
 $M_n$ of all $n \times n$ matrices into itself preserves 
the spectrum, then it is extended to an algebra automorphism on 
$M_n$. 
We show a type of a theorem of Hochwald for the case of unital semisimple 
commutative Banach algebras (cf. Corollary \ref{commutative}). 
We also show that a group homomorphism between the invertible 
groups of unital semisimple Banach algebra is extended to a real-algebra 
isomorphisms between underlying algebras if $T$ is isometric with respect to 
the spectral radius (cf. Corollary \ref{cor3.3}).
In this paper we consider not only group homomorphisms with additional 
topological properties between 
invertible groups but also multiplicatively (resp. peripheral) 
spectrum-preserving maps and norm-preserving maps. We say that a map $T$ 
between Banach algebras 
is 
multiplicatively spectrum-preserving if 
\[
\sigma(TfTg)=\sigma (fg)
\]
holds for every pair $f$ and $g$ in the domain of $T$, where $\sigma (\cdot)$ 
denotes the spectrum. The peripheral spectrum $\{z\in \sigma (f):
|z|=\rr(f)\}$ is denoted by 
$\sigma _{\pi}(f)$, where $\rr (f)$ is the spectral radius for $f$. 
We say that $T$ is multiplicatively peripheral spectrum-preserving 
if 
\[
\sigma_{\pi}(TfTg)=\sigma _{\pi}(fg)
\]
holds for every pair $f$ and $g$ in the domain of $T$, and
$T$ is multiplicatively norm-preserving if 
\[
\|TfTg\|=\|fg\|
\]
holds for every pair $f$ and $g$ in the domain of $T$ 
for a certain norm $\|\cdot \|$ including the spectral radius. 
The study of multiplicatively 
spectrum-preserving maps between Banach algebras was initiated by 
Moln\'ar \cite{m1} and he characterized algebra isomorphisms in terms of 
multiplicative spectrum-preservingness. Rao and Roy \cite{rr1} and Hatori, 
Miura and Takagi \cite{hmt1} generalized the theorem of Moln\'ar 
for uniform algebras. Hatori, Miura and Takagi \cite{hmt2}
also generalizes for unital semisimple commutative Banach algebras. 
Luttman and Tonev \cite{lt} introduced multiplicatively peripheral 
spectrum-preserving maps and generalizes results of Rao and Roy \cite{rr1} and 
Hatori, Miura and Takagi \cite{hmt1} in the case of uniform algebras.
Lambert, Luttman and Tonev \cite{l} considered the maps with much weaker 
conditions such as mutiplicatively norm-preservingsess or 
weakly peripherally-multiplicativity.

After some preliminaries in the next section, we study  multiplicatively 
norm-preserving maps between the invertible groups of 
commutative Banach algebras in section three. We show that 
commutative $C^*$-algebras are algebraically isomorphic if 
there exists a multiplicatively norm-preserving surjection, in 
particular, a norm-preserving group isomorphism between the invertible 
groups of the algebras. 

In section four we consider multiplicatively 
spectrum-preserving maps, in particular, peripheral spectrum-preserving 
group isomorphisms between invertible groups of uniform algebras
and show that they are extended to algebra isomorphisms 
between underling  algebras.

In section five we consider non-symmetric multiplicatively 
norm-preserving maps and we consider non-symmetric multiplicatively 
(resp. peripheral) spectrum-preserving maps in section six. 
We say that a map $T$ is non-symmetric multiplicatively norm-preserving 
if 
\[
\|TfTg-\alpha\|=\|fg-\alpha\|
\]
holds for every pair $f$ and $g$ in the domain of $T$ and some 
non-zero complex number $\alpha$.
We say that $T$ is non-symmetric multiplicatively spectrum-preserving 
if 
\[
\sigma (TfTg-\alpha )=\sigma (fg-\alpha )
\]
holds for every pair $f$ and $g$ in the domain of $T$ and some 
non-zero complex number $\alpha$. We say that $T$ is non-symmetric 
multiplicatively peripheral spectrum-preserving 
if 
\[
\sigma_{\pi}(TfTg-\alpha )=\sigma_{\pi}(fg-\alpha )
\]
holds for every pair $f$ and $g$ in the domain of $T$ and some 
non-zero complex number $\alpha$.

In the last section 
we consider maps between commutative Banach algebras which are 
multiplicatively norm-preserving, and show a generalization of a 
theorem of Luttman and Tonev \cite{lt}. We also study non-symmetric 
multiplicatively norm-preserving maps between commutative Banach algebras.

\section{preliminaries}
Suppose that $A$ is a unital commutative Banach algebra. 
We call the group of all invertible elements in $A$ the invertible 
group of $A$. The invertible group of $A$ is denoted by $A^{-1}$. 
The spectrum of $f\in A$ is denoted by $\sigma (f)$. The peripheral 
spectrum $\sigma_{\pi}(f)$ is the set $\{z\in \sigma (f):|z|=
r(f)\}$, where $r(f)$ denotes the spectral radius 
of $f\in A$. The maximal ideal 
space of $A$ is a compact Hausdorff space and 
is denoted by $M_A$. Then by Gelfand theory, 
the equality 
$\sigma (f)=\hat f(M_A)$ holds, 
where $\hat f$ is the Gelfand transform of $f$.
For a compact Hausdorff space $X$, the algebra of all complex-valued 
continuous functions on $X$ is denoted by $C(X)$.
We denote $\mcl A$ the uniform closure of the Gelfand transform $\hat A$ 
of 
$A$ in $C(M_A)$. 
Then $\mcl A$ is a uniform algebra on $M_A$. 
In this paper we denote the Gelfand transform of $f$ in $A$ also by 
$f$; omitting $\hat{\cdot}$, if $A$ is semisimple.

Let ${\mathcal A}$ be a uniform algebra on a compact Hausdorff space $X$, 
that is, ${\mathcal A}$ is a uniformly closed subalgebra of $C(X)$ 
which contains constants and separates the points of $X$.
For a subset $K$ of $X$ the supremum norm on $K$ is denoted by 
$\|g\|_{\infty (K)}$ for $g\in \A$. 
A function $f\in {\mathcal A}$ is said to be a peaking function if
$\sigma_{\pi}(f)=\{1\}$. 
Since the spectral radius and the supremum norm 
on $X$ coincide for uniform algebras, we see by a simple calculation 
that  $\sigma_{\pi}(g)=
\{z\in g(X):|z|=\|g\|_{\infty (X)}\}$ for every $g\in \A$. 
We denote the set of all peaking function by 
$P_{\A}$ and $P_{\A}(x)=\{f\in P_{\A}:f(x)=1\}$ for $x\in X$. 
We also denote 
\[
P_{\A}^0=\{f\in P_{\A}:0\not\in \sigma (f)\}
\]
and 
\[
P_{\A}^0(x)=\{f\in P_{\A}(x):0\not\in \sigma (f)\}
\]
for $x\in X$.
For a closed subset $K$ of $X$, we say that $K$ is a peak set 
if there is a peaking function $f\in \A$ such that $K=f^{-1}(1)$. 
If a peak set  is a singleton, then the unique 
element of the set is said to be a 
peak point.
An intersection of peak sets is said to be a $p$-set. If a $p$-set is a 
singleton, then the unique element of the set is said to be a $p$-point. 
The Choquet boundary for ${\mathcal A}$ is denoted by $\Ch ({\mathcal A})$. 
Note that $\Ch (\A)$ consists of all $p$-points. 
Note that for every $f\in \A$, $\sigma_{\pi}(f)\subset 
f(\Ch (\A))$ holds. 
(Suppose that $\alpha \in \sigma_{\pi} (f)$. If $\alpha=0$, then 
$\|f\|_{\infty (X)}=0$, so that the inclusion holds. 
If $\alpha \ne 0$, then by a simple calculation 
we see that $f^{-1}(\alpha )$ is a peak set for $\A$, so that 
$\Ch (\A)\cap f^{-1}(\alpha )\ne \emptyset$ by Corollary 2.4.6 in 
\cite{B}.)
See \cite{B,ga} for theory of uniform algebras.

The following is a version of a theorem of 
Bishop (cf. \cite[Theorem 2.4.1]{B}) and it is a generalization of 
Corollary 1 of \cite{l}.

\begin{lemma}\label{bishop}
Let $\A$ be a uniform algebra on a compact Hausdorff space $X$.
Let $f \in \A$ and $x_0 \in \Ch (\A)$.
If $f(x_0 )\ne 0$, 
then there exists a $u \in P_{\A}^0({x_0})$ 
such that $\sigma_{\pi}(fu)=\{f(x_0)\}$.
\end{lemma}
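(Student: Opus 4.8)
The plan is to combine the classical peaking-function technology from Bishop's theorem with a careful normalization so that the resulting peaking function is moreover invertible (i.e.\ lies in $P_{\A}^0$), which is exactly what the hypothesis $f(x_0)\ne 0$ should buy us. First I would invoke the standard Bishop-type construction: since $x_0\in\Ch(\A)$ is a $p$-point and $f(x_0)\ne 0$, the value $f(x_0)$ is a peak value of $f$, so $f^{-1}(f(x_0))$ is a peak set and $x_0$ is a distinguished point of it. Normalizing $f$ by dividing by the scalar $f(x_0)$, it suffices to treat the case $f(x_0)=1$ and produce $u\in P_{\A}^0(x_0)$ with $\sigma_\pi(fu)=\{1\}$; one then rescales back.

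The core of the argument is an iterative/convergence scheme exactly as in the proof of \cite[Theorem 2.4.1]{B} (or the argument behind Corollary 1 of \cite{l}). One picks a sequence of peaking functions $v_n\in P_{\A}(x_0)$ that increasingly concentrate: outside a shrinking neighborhood $U_n$ of $x_0$ the values of $v_n$ are forced to have modulus $\le \varepsilon_n$, while near $x_0$ one uses continuity of $f$ to ensure $|f - f(x_0)| = |f-1|$ is small. Taking a suitable convergent product $u=\prod_n v_n^{k_n}$ (with exponents chosen so the product converges uniformly and remains in $P_{\A}$, using that $P_{\A}$ is closed under such products), one arranges that $u(x_0)=1$, $\|u\|_{\infty(X)}=1$, and $|u|$ is uniformly small off a neighborhood of $x_0$ where $f\approx 1$. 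Then $|fu|$ attains its maximum only near $x_0$, where $fu\approx 1$, forcing $\sigma_\pi(fu)=\{fu(x_0)\}=\{1\}$ after a final limiting/refinement step; this is the part that is genuinely delicate and where I expect the main obstacle to lie — controlling the argument of $fu$ near $x_0$ simultaneously with the modulus so that the peripheral spectrum collapses to a single point rather than a small arc.

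The new ingredient relative to the cited results is ensuring $0\notin\sigma(u)$, i.e.\ $u\in P_{\A}^0(x_0)$. For this I would not insist on the naive product above (whose values can hit $0$), but instead replace each building block $v_n$ by $\lambda_n + (1-\lambda_n)v_n$ for small $\lambda_n>0$: such a convex combination is still a peaking function at $x_0$ when $v_n\in P_{\A}(x_0)$ (its peripheral spectrum is still $\{1\}$ since $|\lambda_n+(1-\lambda_n)w|<1$ whenever $|w|\le 1$, $w\ne 1$), it now has range bounded away from $0$ by $\lambda_n(1-\|v_n\|)\ge$ a positive constant, hence is invertible in $\A$, and products of such functions remain invertible and remain peaking. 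Choosing $\lambda_n\to 0$ fast enough that the modification does not destroy the concentration estimates above, the limit $u$ lies in $P_{\A}^0(x_0)$, and the conclusion $\sigma_\pi(fu)=\{f(x_0)\}$ follows by rescaling. I would present the modulus/argument estimates in the neighborhood of $x_0$ in full but treat the convergence of the infinite product as routine, citing \cite{B} for the underlying Bishop construction.
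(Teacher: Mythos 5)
Your overall strategy --- run the Bishop-type concentration argument so that $(1/f(x_0))fu$ becomes a peaking function at $x_0$, while modifying the building blocks to force $u$ to be invertible --- is the same as the paper's. The Bishop part is fine in outline: the paper decomposes $X$ into the level sets $F_n=\bigl\{x: |\lambda|/2^{n+1}\le |f(x)-\lambda|\le |\lambda|/2^n\bigr\}$ of $|f-\lambda|$ with $\lambda=f(x_0)$, takes peaking functions $v_n$ at $x_0$ with $|v_n|<1/2$ on $F_n$, and forms $u=u_0\ssum_{k\ge1}u_k/2^k$ from modified blocks $u_n$; the ``collapse to a single point'' that you flag as the delicate step is actually automatic once one checks $g(X)\subset D\cup\{1\}$ and $g(x_0)=1$ for $g=(1/\lambda)fu$, because for a uniform algebra $\sigma_\pi(g)=\{z\in g(X):|z|=\|g\|_{\infty(X)}\}$, so no separate control of the argument of $fu$ is needed.

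The genuine gap is in your invertibility step, which is precisely the new content of the lemma. First, $h_n=\lambda_n+(1-\lambda_n)v_n$ is \emph{not} bounded away from $0$: it vanishes wherever $v_n$ takes the value $-\lambda_n/(1-\lambda_n)$, a point of the open unit disk that a peaking function may perfectly well hit, and your claimed lower bound $\lambda_n(1-\|v_n\|)$ is identically zero because $\|v_n\|_{\infty(X)}=1$ for every peaking function. Second, even if $h_n$ were nonvanishing on $X$, that would not give $0\notin\sigma(h_n)$, since $\sigma(h_n)=\widehat{h_n}(M_{\A})$ and $X$ need not be the maximal ideal space (the coordinate function in the disk algebra, viewed on the circle, is nonvanishing there but not invertible). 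The paper's device handles both problems at once: it composes $v_n$ with $\pi\circ\phi_{\delta}$, where $\phi_{\delta}$ is a M\"obius transformation of $\bar D$ pushing $\{|z|<1/2\}$ into a small neighborhood of $-1$ and $\pi$ is a homeomorphism of $\bar D$ onto the closed right half-disk, analytic inside, with $\pi(1)=1$ and $\pi(-1)=0$. The resulting $u_n$ equals $1$ at $x_0$, is small on $F_n$, and takes values in $\Omega\cup\{1\}$ with $\Omega$ the open right half-disk, so its spectrum lies in a closed half-plane missing $0$ and $u_n\in\exp\A\subset\A^{-1}$. Your convex-combination trick cannot be repaired just by shrinking $\lambda_n$ (any affine self-map of the disk fixing $1$ and keeping small values small must map an interior point to $0$), so as written the proposal does not establish $u\in P_{\A}^0(x_0)$.
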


\begin{proof}
It is enough to show that there is a $u \in P_{\A}^0({x_0})$ 
such that $(1/f(x_0)) fu \in P_{\A}(x_0)$.
Put $\lambda = f(x_0)$. 
Put $F_0 = \bigl\{ \, x \in X \, : \, 
|f(x) - \lambda | \geq | \lambda |/2 \, \bigr\}$ and 
$$
 F_n = \left\{ \, x \in X \, : \, 
 \frac{| \lambda |}{2^{n+1}} \leq | f(x) - \lambda | 
 \leq \frac{| \lambda |}{2^n} 
              \right\} \quad ( n=1, 2, \ldots ) .
$$
Then $F_0 , F_1, \cdots , F_n , \cdots$ are  closed subsets of 
$X$ which 
do not contain $x_0$. 
Since $x_0$ is a $p$-point,
there exists a sequence of peaking functions 
$\{v_n\}_{n=0}^{\infty}$ such that $v_n(x_0)=1$ and $|v_n|<1/2$ on 
$F_n$ holds for every non-negative integer $n$. 
Let $\bar D$ be the closed unit 
disk $\{z\in {\mathbb C}:|z|\le 1\}$ and $\bar{\Omega}=
\{z\in \bar D|\mathrm{Re}z\ge0\}$. Put 
$\pi (z) = i\frac{\sqrt{(z+i)/(iz+1)}-i}{\sqrt{(z+i)/(iz+1)}+i}$. 
Then $\pi$ is a homeomorphism from $\bar D$ onto $\bar{\Omega}$ 
such that $\pi (1)=1$, $\pi (-1)=0$ and $\pi$ is analytic on 
the open unit disk $D$ onto the interior $\Omega$ of $\bar{\Omega}$. 
For every positive real number $\varepsilon$ there exists a 
positive real number $\delta_{\varepsilon}$ which satisfies that 
$|\pi (z)|<\varepsilon$ holds for every $z\in \bar D$ with 
$|z+1|<\delta_{\varepsilon}$. For every positive real number $\delta$ 
there exists a M\"obius transformation $\phi_{\delta}$ from 
$\bar D$ onto $\bar D$ such that $\phi_{\delta}(1)=1$ and 
$|\phi_{\delta}(z)+1|<\delta$ holds for every complex number $z$ with 
$|z|<1/2$. Put 
$u_0=\pi\circ\phi_{\delta_{|f(x_0)|/\|f\|_{\infty (X)}}}\circ v_0$. 
For a positive integer $n$, put 
$u_n=\pi\circ\phi_{\delta_{1/(2^n+1)}}\circ v_n$. Note that 
$\pi\circ\phi_{\varepsilon}$ is approximated by analytic polynomials on 
$\bar D$ since it is continuous on $\bar D$ and analytic on 
$D$. Thus $u_n \in \A$ for every non-negative integer $n$. 
We also see that for every non-negative integer $n$ 
$\mathrm{Re}u_n>0$ on $X$ since $v_n(X)\subset D\cup \{1\}$. 
By the definition of $\phi_{\varepsilon}$ we also see that 
$|u_0|<|\lambda| /\|f\|_{\infty (X)}$ on $F_0$ and 
$|u_n|<1/(2^n+1)$ on $F_n$ for every positive integer $n$. 

Now put 
$$
  u = u_0  \ssum _{k=1} ^{\infty} \frac{u_k }{2^k}.
$$
The above series is majorized by the convergent series $\ssum \frac{1}{2^k}$, 
so $u$ is in $\A$. Since $\mathrm{Re}u >0$ on $X$, we see that 
$u \in \exp \A\subset \A^{-1}$. 
Moreover, $u$ is easily seen to be a function in 
$P_{\A}^0({x_0})$.

Put $g = (1/\lambda ) fu$. Then this $g$ is a desired function. 
Choose an arbitrary $x \in X$. 
If $x \in F_0$, then we have
\begin{multline*}
|g(x)| = \frac{1}{| \lambda |} |f(x)| \, 
 |u_0 (x)| \ssum _{k=1} ^{\infty} \frac{|u_k (x)|}{2^k}\\
   < \frac{1}{| \lambda |} \| f \|_{\infty (X)} 
   \frac{| \lambda |}{\| f \|_{\infty(X)}} 
   \ssum _{k=1} ^{\infty} \frac{1}{2^k} = 1 .
\end{multline*}
If $x \in F_n$ for some positive integer $n$, then
\begin{equation*} \begin{split}
 |g(x)| & = \frac{1}{| \lambda |} |f(x)| \, |u_0 (x)| 
      \left( \frac{|u_n (x)|}{2^n} + \ssum _{k \neq n} 
      \frac{| u_k (x) |}{2^k} \right) \\
  & \leq \frac{1}{| \lambda |} \left( |f(x) - \lambda | + 
  | \lambda | \right) 
          \left( \frac{|u_n (x)|}{2^n} + \ssum _{k \neq n} 
          \frac{1}{2^k} \right) \\
  & < \frac{1}{| \lambda |} \left( \frac{| \lambda |}{2^n} + 
  | \lambda | \right) 
          \left( \frac{1}{2^n} \frac{1}{2^n + 1} + 
          1-\frac{1}{2^n} \right) =1 .
\end{split} \end{equation*}
If $x \in X \setminus \CuP _{n=0} ^{\infty} F_n$, 
then $f(x) = \lambda$ and so $g(x) = u(x)$.
Thus we have that $g(X) \subset D \cup \{ 1 \}$ and 
$g(x_0 ) = u( x_0 ) = 1$, so the proof is completed.
\end{proof}

\section{Multiplicatively norm-preserving maps between invertible groups}
Multiplicatively norm-preserving maps on uniform algebras are recently 
studied by Lambert, Luttman and Tonev \cite{l}. We show the following, 
which is a generalization of Theorem 1 in \cite{l}. 

\begin{theorem}\label{0korovkin}
Let $\A$ and $\B$ be uniform algebras on compact Hausdorff spaces $X$ and 
$Y$ respectively. 
Let $T$ be a map from $\A^{-1}$ onto $\B^{-1}$. 
Suppose that 
\[
\|TfTg\|_{\infty(Y)}=\|fg\|_{\infty(X)}
\]
holds for every $f, g\in \A^{-1}$.
Then there exists a homeomorphism $\phi$ from $\Ch (\B)$ onto $\Ch (\A)$ 
such that $|Tf(y)|=|f(\phi (y))|$ holds for every $f\in \A^{-1}$ and 
$y\in \Ch (\B)$.
\end{theorem}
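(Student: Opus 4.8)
\emph{Proof plan.}
The plan is to recover $\phi$ from the behaviour of the moduli $|Tf|$, using Lemma~\ref{bishop} on both $\A$ and $\B$ as the main device; throughout write $E_{h}=\{x:|h(x)|=\|h\|_{\infty}\}$ for the norm-attaining set of an element $h$ of a uniform algebra. The first, routine, step is to put $g=f$ in the hypothesis and use $\|h^{2}\|_{\infty}=\|h\|_{\infty}^{2}$ for uniform algebras to get $\|Tf\|_{\infty(Y)}=\|f\|_{\infty(X)}$ for all $f\in\A^{-1}$ (so $\|T1\|_{\infty(Y)}=1$). Since for $h,k$ in a uniform algebra $\|hk\|_{\infty}=\|h\|_{\infty}\|k\|_{\infty}$ holds exactly when $E_{h}\cap E_{k}\neq\emptyset$, the hypothesis then yields the preservation principle
\[
E_{f}\cap E_{g}\neq\emptyset\iff E_{Tf}\cap E_{Tg}\neq\emptyset\qquad(f,g\in\A^{-1}),
\]
and, comparing $\|g_{1}h\|$ with $\|g_{2}h\|$ for $h=g_{1}^{-1},g_{2}^{-1}$, one sees $Tg_{1}=Tg_{2}\Rightarrow|g_{1}|=|g_{2}|$ on $X$.

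Next I would extract two infimum formulas from Lemma~\ref{bishop}. For $x_{0}\in\Ch(\A)$ and $f\in\A^{-1}$ the lemma gives $u\in P_{\A}^{0}(x_{0})$ with $\|fu\|_{\infty(X)}=|f(x_{0})|$, while $\|fu\|_{\infty(X)}\geq|f(x_{0})|$ for every $u\in P_{\A}^{0}(x_{0})$, so $|f(x_{0})|=\inf\{\|fu\|_{\infty(X)}:u\in P_{\A}^{0}(x_{0})\}$. Applying Lemma~\ref{bishop} inside $\B$ (each $Tf$ is nowhere zero) and using surjectivity of $T$ to write every $v\in P_{\B}^{0}(y_{0})$ as $v=Tg$, one gets for $y_{0}\in\Ch(\B)$, $f\in\A^{-1}$,
\[
|Tf(y_{0})|=\inf\bigl\{\|fg\|_{\infty(X)}:g\in\mathcal{F}_{y_{0}}\bigr\},\qquad\mathcal{F}_{y_{0}}:=\{g\in\A^{-1}:Tg\in P_{\B}^{0}(y_{0})\},
\]
where each $g\in\mathcal{F}_{y_{0}}$ satisfies $\|g\|_{\infty(X)}=\|Tg\|_{\infty(Y)}=1$.

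Now construct $\phi$: for $y_{0}\in\Ch(\B)$ put $D(y_{0})=\bigcap_{g\in\mathcal{F}_{y_{0}}}E_{g}\subseteq X$, and show $D(y_{0})$ is a single point of $\Ch(\A)$. The hard part is non-emptiness: one must pass from the pairwise intersections given by the preservation principle to arbitrary finite ones, i.e.\ prove $\|g_{1}\cdots g_{n}\|_{\infty(X)}=1$ for $g_{1},\dots,g_{n}\in\mathcal{F}_{y_{0}}$. Since the hypothesis only controls products $TfTg$ of \emph{two} values, the device is to use that $\mathcal{F}_{y_{0}}$ is stable under passing to $T$-preimages of products of the peaking functions $Tg_{i}\in P_{\B}^{0}(y_{0})$ (such products again peak at $y_{0}$), to feed this through the hypothesis, and to invoke that every $h\in\B$ attains its norm on $\Ch(\B)$ (from $\sigma_{\pi}(h)\subseteq h(\Ch(\B))$, recalled in Section~2) in order to keep the resulting maximisers on the Choquet boundary; this is exactly where the arguments of Luttman--Tonev and Lambert--Luttman--Tonev \cite{lt,l} have to be transported to the invertible group, and it is where essentially all the difficulty lies. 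Granting $D(y_{0})\neq\emptyset$, the rest is easy: for $x_{0}\in D(y_{0})$ every $g\in\mathcal{F}_{y_{0}}$ has $|g(x_{0})|=1$, so $\|fg\|_{\infty(X)}\geq|f(x_{0})|$ and hence $|Tf(y_{0})|\geq|f(x_{0})|$ by the second infimum formula; applying this to $f^{-1}$ and combining with $|Tf(y_{0})|\,|T(f^{-1})(y_{0})|\leq\|TfT(f^{-1})\|_{\infty(Y)}=\|ff^{-1}\|_{\infty(X)}=1$ forces $|Tf(y_{0})|=|f(x_{0})|$ for all $f\in\A^{-1}$. As $\A^{-1}$ separates the points of $X$ even in modulus (if $h\in\A$ separates $x_{0}$ from $x_{1}$ then $h+c\in\A^{-1}$ does so in modulus for a suitable constant $c$), $D(y_{0})$ reduces to a single point $\phi(y_{0})$, which lies in $\Ch(\A)$ by the $p$-set structure of the intersection, and $|Tf(y)|=|f(\phi(y))|$ for all $f\in\A^{-1}$, $y\in\Ch(\B)$.

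Finally I would check $\phi$ is a homeomorphism onto $\Ch(\A)$. It is injective since $\phi(y_{1})=\phi(y_{2})$ forces $|h(y_{1})|=|h(y_{2})|$ for all $h\in\B^{-1}=T(\A^{-1})$ and $\B^{-1}$ separates points of $Y$ in modulus; it is continuous, for if $y_{\lambda}\to y$ and $x'$ is a cluster point in $X$ of $(\phi(y_{\lambda}))$ then $|f(x')|=\lim|f(\phi(y_{\lambda}))|=\lim|Tf(y_{\lambda})|=|f(\phi(y))|$ for every $f\in\A^{-1}$, so $x'=\phi(y)$; surjectivity onto $\Ch(\A)$ is obtained dually, from the first infimum formula together with $\|fu\|_{\infty(X)}=\|TfTu\|_{\infty(Y)}$ and again the fact that peripheral spectra are attained on the Choquet boundary; and $\phi^{-1}$ is then continuous by the same argument with $\A,\B$ interchanged. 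The one genuinely hard point is the non-emptiness (and $p$-set structure) of $D(y_{0})$; the norm identity, the two infimum formulas, the $f\leftrightarrow f^{-1}$ computation, and the topological bookkeeping are all straightforward.
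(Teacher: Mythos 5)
Your plan follows essentially the same route as the paper's own proof: your sets $D(y_0)$ are the paper's $L_y$, their non-emptiness is obtained exactly as in the paper (pull back the product $\prod_i Tg_i\in P_{\B}^0(y_0)$ by surjectivity, feed it through the two-variable norm identity with the help of Lemma~\ref{bishop}, and use that peripheral spectra are attained on the Choquet boundary), the modulus identity again comes from Lemma~\ref{bishop}, and the homeomorphism is obtained from the mirrored construction of a map $\psi$ on $\Ch (\A)$ built from $T$ and its surjectivity (as you indicate, since $T$ need not be injective one cannot literally interchange $\A$ and $\B$). Your local shortcuts --- deriving $\|Tf\|_{\infty (Y)}=\|f\|_{\infty (X)}$ by taking $g=f$, and the comparison of $f$ with $f^{-1}$ which replaces part of the paper's peaking-function computation in proving $|Tf(y)|=|f(\phi (y))|$ --- are correct and mildly streamline the argument, while the finite-intersection step you flag as the hard one is carried out in the paper precisely along the lines you sketch.
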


Note that $T$ need not be injective. 

\begin{proof}
Let $y\in \Ch (\B)$ and put
\begin{multline*}
L_y=\{x\in X:\text{$|f(x)|=1$ for every}\\
\text{ $f\in \A^{-1}$ with 
$|Tf(y)|=1=\|Tf\|_{\infty(Y)}$}\}.
\end{multline*}
We show that $L_y \ne \emptyset$. Let $f_1, \dots, f_n\in \A^{-1}$ 
such that $|Tf_j(y)|=1=\|Tf_j\|_{\infty(Y)}$ for 
$j=1,2,\dots,n$. 
We show that 
\[
\cap_{j=1}^{n}|f_j|^{-1}(1)\ne \emptyset.
\]
By $\|T1T1\|_{\infty(Y)}=\|1^2\|_{\infty(X)}=1$ we have 
that $|T1|\le 1$ on $\Ch (\B)$. 
We show that $|T1|=1$ on $\Ch (\B)$. 
Suppose that $|T1(y)|<1$ for some $y\in \Ch (\B)$. Then by Lemma 
\ref{bishop} 
there exists an $H\in P_{\B}^0(y)$ such that 
$\|T1H\|_{\infty(Y)}=|T1(y)|$. Choose an $h\in \A^{-1}$ with $Th=H$. Then 
we have that $\|h\|_{\infty(X)}=1$ since
$\|h^2\|_{\infty(X)}=\|H^2\|_{\infty(Y)}=1$ and $\|\cdot\|_{\infty(\cdot)}$ 
is a uniform norm. 
It follows that $\|T1H\|_{\infty(Y)}=\|h\|_{\infty(X)}=1$, 
which is a contradiction. Thus 
we see that $|T1|=1$ on $\Ch (\B)$. 
Thus we see that 
\[
\|f\|_{\infty(X)}=\|1f\|_{\infty(X)}=\|T1Tf\|_{\infty(Y)}=\|Tf\|_{\infty(Y)}
\]
holds for every 
$f\in \A^{-1}$. 
Put $F=\prod_{j=1}^nTf_j$. Then $|F(y)|=1=\|F\|_{\infty(Y)}$. Since $T$ is 
a surjection we can choose  $f\in \A^{-1}$ such 
that $Tf=F$. Then there exists $x_0\in X$ with 
\[
|f(x_0)|=\|f\|_{\infty(X)}=\|F\|_{\infty(Y)}=1
\]
since $\|f\|_{\infty (X)}=\|Tf\|_{\infty (Y)}
=\|F\|_{\infty (Y)}=1$. Since $Tf=\prod_{j=1}^nTf_j$ and 
$\|Tf_j\|_{\infty(Y)}=1=\|Tf\|_{\infty(Y)}$, we have $|Tf|^{-1}(1)\subset 
|Tf_j|^{-1}(1)$, so 
\[
|f|^{-1}(1)\cap \Ch (\A) \subset |f_j|^{-1}(1).
\]
(Suppose that 
there exists $x_1\in \Ch (\A)$ such that $|f(x_1)|=1$ and 
$|f_j(x_1)|\ne 1$. Then $|f_j(x_1)|<1$ for $\|f_j\|_{\infty(X)}=1$. 
Then there exists a $u\in P_{\A}^0(x_1)$ such that 
$|uf_j|<1$ on $X$, so that $1>\|uf_j\|_{\infty(X)}=
\|TuTf_j\|_{\infty(Y)}$. On the other 
hand, since $1=\|uf\|_{\infty(X)}=\|TuTf\|_{\infty(Y)}$, 
there exists a $y_1\in Y$ such that 
$|Tu(y_1)Tf(y_1)|=1$, so that 
\[
|Tu(y_1)|=1=|Tf(y_1)|
\]
holds since $\|Tu\|_{\infty (Y)}=
\|u\|_{\infty (X)}$ and $\|Tf\|_{\infty (Y)}=1$. 
Since $|Tf|^{-1}(1)\subset |Tf_j|^{-1}(1)$ 
we see that 
$|Tu(y_1)Tf_j(y_1)|=1$, so $\|TuTf_j\|_{\infty(Y)}=1$, 
which is a contradiction.) 
Thus we see that 
\[
\cap_{j=1}^n|f_j|^{-1}(1)\supset \left(|f|^{-1}(1)\right)\cap
\Ch (\A).
\]
Note that $\left(|f|^{-1}(1)\right)\cap
\Ch (\A)\ne \emptyset$. (Since $\|f\|_{\infty(X)}=1$, 
there is an $x\in \Ch (\A)$ with 
$|f(x)|=1$. Then $f^{-1}(f(x))$ is a peak set since 
$(1+\overline{f(x)}f)/2$ peaks on $f^{-1}(f(x))$. Then 
\[
\emptyset \ne \Ch (\A)
\cap f^{-1}(f(x))\subset \Ch (\A)\cap |f|^{-1}(1).)
\]
Then 
$\cap_{j=1}^n|f_j|^{-1}(1)\ne \emptyset$. By the finite intersection property 
we see that $L_y\ne \emptyset$. 

Next we show that $L_y$ is a singleton and the unique element in $L_y$ 
is an element in $\Ch(\A)$. 
Let $x_2\in L_y$ and $f\in \A^{-1}$ such that $|Tf(y)|=1=\|Tf\|_{\infty(Y)}$. 
Then we have $|f(x_2)|=1$ by the definition of $L_y$.
Put $\tilde f= (\overline{f(x_2)}f+1)/2$. Then $\tilde f$ is a peak 
function such that $\tilde f(x_2)=1$. We see that
$\tilde f^{-1}(1)\subset |f|^{-1}(1)$, so 
\[
x_2\in \cap_f \tilde f^{-1}(1) \subset L_y.
\]
Since $\cap_f\tilde f^{-1}(1)$ is a $p$-set, there is an 
\[
x_0
\in \left(\cap_f\tilde f^{-1}(1)\right)\cap \Ch (\A).
\]
We show that 
$L_y=\{x_0\}$. Suppose that $x_3\in L_y\setminus\{x_0\}$. Then 
there exists a $u \in P_{\A}^0(x_0)$ such that $|u(x_3)|<1$. 
Since $\|Tu\|_{\infty(Y)}=1$ 
we see that $|Tu(y)|<1$ by the definition of $L_y$. 
(Suppose that $|Tu(y)|=1$. Then $|u|=1$ on $L_y$, which is a contradiction. 
So $|Tu(y)|\ne 1$, and since $\|Tu\|_{\infty (Y)}=1$, 
we have that $|Tu(y)|<1$.) 
Then there exists a $F'\in P_{\B}^0(y)$ such that 
$\|F'Tu\|_{\infty (Y)}<1$. Since $T$ is a surjection, there is an 
$f'\in \A^{-1}$ such that $Tf'=F'$. 
Then we have that 
\[
1>\|Tf'Tu\|_{\infty (Y)}=\|f'u\|_{\infty (X)}.
\]
Since
\[
Tf'(y)=F'(y)=1 
=\|F'\|_{\infty (Y)}=\|Tf'\|_{\infty (Y)}, 
\]
we have $|f'|=1$ on $L_y$, so 
we see that
$|f'(x_0)u(x_0)|=1$, so $\|f'u\|_{\infty (X)}=1$, which is a contradiction. 
We see that $L_y=\{x_0\}$ and $x_0\in \Ch (\A)$.

Put a function $\phi$ from $\Ch (\B)$ into $\Ch (\A)$ by 
$\phi(y)=x_0$, the unique element in $L_y$. We show that 
$|Tf(y)|=|f(\phi (y))|$ holds for every $f\in \A^{-1}$ and $y\in \Ch (\B)$. 
For the case where $f\in \A^{-1}$ satisfies that $|Tf(y)|=1=
\|Tf\|_{\infty (Y)}$, we see that 
$|Tf(y)|=|f(\phi (y))|$ holds for every $y\in \Ch (\B)$ by the definition 
of $\phi$. Let $f$ be an arbitrary function in $\A^{-1}$. 
Since $f(\phi(y))\ne 0$, there exists an $h\in P_{\A}^0(\phi (y))$ such that 
$\sigma_{\pi}(fh)=\{f(\phi (y))\}$ by Lemma \ref{bishop}, so 
we see that 
\[
|f(\phi (y))|=\|fh\|_{\infty(X)}=\|TfTh\|_{\infty(Y)}\ge
|Tf(y)Th(y)|.
\]
We see that $|Th(y)|=1$. (Suppose not. Then $|Th(y)|<1$ 
since $\|h\|_{\infty(X)}=\|Th\|_{\infty(Y)}=1$. 
Then there is an $H'\in P_B^0(y)$ such that 
$\|ThH'\|_{\infty(Y)}<1$. Choose an $h'\in \A^{-1}$ with $Th'=H'$. Then 
$|h'(\phi (y))|=1$ by the definition 
of $L_y$. We see that 
\[
\|ThH'\|_{\infty(Y)}=\|hh'\|_{\infty(X)}\ge |h(\phi (y))h'(\phi (y))|=1,
\]
which is a contradiction.) It follows that
$|f(\phi (y))|\ge |Tf(y)|$. 
On the other hand there exists an $H''\in P_{\B}^0(y)$ such that 
$\sigma_{\phi}(TfH'')=\{Tf(y)\}$. Choose an $h''\in \A^{-1}$ with 
$Th''=H''$. Since $Th''(y)=1=\|Th''\|$, we have 
$|h''(\phi (y))|=1$ by the definition of $L_y$. 
Thus 
\begin{multline}
|Tf(y)|=\|TfH''\|_{\infty(Y)}=\|fh''\|_{\infty(X)}\\
\ge 
|f(\phi (y))h''(\phi(y))|=|f(\phi (y))|.
\end{multline}
It follows that $|Tf(y)|=|f(\phi (y))|$. 

Next we show that $\phi$ is continuous. Let $y\in \Ch (\B)$. 
Suppose that $\{y_{\alpha}\}$ 
is a net in $\Ch (\B)$ which converges to $y$. Let $f\in \A^{-1}$. 
Since $Tf(y_{\alpha}) \to Tf(y)$, and $|Tf(y_{\alpha})|=|f(\phi(y_
{\alpha}))|$ and $|Tf(y)|=|f(\phi (y))|$, we see that
$|f(\phi(y_{\alpha})| \to |f(\phi (y))|$. By the Alexandroff theorem 
the original topology on $X$ coincides with the weak topology on $X$ which is 
induced by the family $\{|f|:f\in \A^{-1}\}$. It follows that 
$\phi(y_{\alpha})\to \phi(y)$. We see that $\phi$ is a continuous map 
from $\Ch (\B)$ into $\Ch (A)$. 

We show that $\phi$ is a homeomorphism. For that purpose we show that 
there exists a continuous function $\psi$ from $\Ch (\A)$ into $\Ch (\B)$ 
such that $\phi\circ\psi$ and $\psi\circ\phi$ are identity functions on 
$\Ch (\A)$ and $\Ch (\B)$ respectively. 
Although we need some consideration since $T$ needs not be 
injective, a proof of the existence of 
$\psi$ is similar to that of $\phi$. Let $x\in \Ch (\A)$. 
Put
\begin{multline*}
K_x=\{y\in Y:\text{$|Tf(y)|=1$ for every }\\
\text{$f\in A^{-1}$ 
with $|f(x)|=1=\|f\|_{\infty(X)}$}\}.
\end{multline*}
Suppose that 
$f_1, \dots, f_n\in \A^{-1}$ with $|f_j(x)|=1=\|f_j\|_{\infty (X)}$ for 
every $j=1,\dots, n$. We show that $\cap_{j=1}^n|Tf_j|^{-1}(1)\ne
\emptyset$. It will follow that $K_x\ne \emptyset$ by the finite intersection 
property. Put $f=\prod_{j=1}^nf_j$. Since $|f_j(x)|=1=\|f_j\|_{\infty(X)}$ 
we have that 
$|f(x)|=1=\|f\|_{\infty(X)}$. Put $Tf=F$. 
Since $|T1|=1$ on $\Ch (\B)$, we see that
\begin{multline*}
\|F\|_{\infty (Y)}=\|F\|_{\infty (\Ch (\B))}=
\|FT1\|_{\infty (\Ch (B))} \\
=\|FT1\|_{\infty (Y)}=\|f\|_{\infty (X)}=1
\end{multline*}
Thus there exists a $y_0\in \Ch (\B)$ with $|F(y_0)|=1$. 
Since $|f|^{-1}(1)\subset |f_j|^{-1}(1)$, we have 
that 
\[
\Ch (\B)\cap |F|^{-1}(1)\subset |Tf_j|^{-1}(1).
\] 
(Suppose not. 
There exists $y_1\in \Ch (\B)$ with 
\[
|F(y_1)|=1>|Tf_j(y_1)|
\]
since 
$\|Tf_j\|_{\infty(Y)}=1$. Then there exists an $$H\in P_{\B}^0(y_1)$$ with 
$\|Tf_jH\|_{\infty(Y)}<1$. 
Since $|FH(y_1)|=1$, we have $\|FH\|_{\infty(Y)}=1$. 
Choose an $h\in \A^{-1}$ 
with $Th=H$. Then 
\[
\|f_jh\|_{\infty(X)}=\|Tf_jTh\|_{\infty(Y)}=\|Tf_jH\|_{\infty(Y)}<1. 
\]
On the other hand, 
we have $1=\|FH\|_{\infty(Y)}=\|fh\|_{\infty(X)}$. 
Since $|f|^{-1}(1)\subset |f_j|^{-1}(1)$, 
we have that$\|f_jh\|_{\infty(X)}=\|fh\|_{\infty(X)}=1$, which is a contradiction.) 
Thus we have that 
\[\Ch (\B) \cap |F|^{-1}\subset \cap_{j=1}^n|Tf_j|^{-1}(1). 
\]
It follows that $\cap_{j=1}^n|Tf_j|^{-1}(1)\ne \emptyset$ since 
$\Ch (\B) \cap |F|^{-1}(1)\ne \emptyset$. (There exists a $y_2\in Y$ with 
$|F(y_2)|=1$ since $\|F\|=1$. Then $(\overline{F(y_2)}F+1)/2$ is a 
peak function which peaks on $F^{-1}(F(y_2))$. Thus 
we have that 
\[
y_2\in \Ch (B)\cap F^{-1}(F(y_2))\subset 
\Ch (\B) \cap |F|^{-1}(1). )
\]

Next we show that
$K_x$ is a singleton. 
Suppose that $y_1\in K_x$. Then $|Tf(y_1)|=1$ for every $f\in \A^{-1}$ 
with $|f(x)|=1=\|f\|_{\infty(X)}$. For an $f\in \A^{-1}$ with 
\[
|f(x)|=1=\|f\|_{\infty (X)},
\]
put 
\[
\tilde F=(\overline{Tf(y_1)}Tf+1)/2. 
\]
Then $\tilde F$ is a peak function such that $\tilde F(y_1)=1$ 
since $\|Tf\|_{\infty (Y)}=1$. 
Thus $y_1\in \cap \tilde F^{-1}(1)\subset K_x$, where $\cap$ takes for 
all 
$\tilde F=(\overline{Tf(y_1)}Tf+1)/2$ for 
$f\in \A^{-1}$ with $|f(x)|=1=\|f\|_{\infty(X)}$. 
Since $\cap \tilde F^{-1}(1)$ is a
$p$-set, there exists an $y_0\in \left(\cap \tilde F^{-1}(1)\right)\Ch (\B)$. 
We show that $\{y_0\}=K_x$. Suppose that $y_3\in K_x\setminus \{y_0\}$. 
Then there exists an $H\in P_{\B}^0(y_0)$ with $|H(y_3)|<1$. 
Choose an $h\in \A^{-1}$ with $Th=H$. 
Then $\|h\|_{\infty (X)}=\|H\|_{\infty (Y)}=1$. 
Suppose that $|h(x)|=1$. Then 
$|H|=1$ on $K_x$ by the definition of $K_x$, which contradicts to 
$|H(y_3)|<1$. Thus we have that $|h(x)|\ne 1$, so $|h(x)|<1$ since 
$\|h\|_{\infty(X)}=\|H\|_{\infty(Y)}=1$. 
Thus there exists an $f\in P_{\A}^0(x)$  with 
$\|fh\|_{\infty(X)}<1$, so $\|TfH\|_{\infty(Y)}=\|fh\|_{\infty(X)}<1$. 
On the other hand we have that
$|Tf(y_0)|=1$ since $|Tf|=1$ on $K_x$ by the definition of $K_x$. 
Thus we have that $\|TfH\|_{\infty(Y)}=1$, 
which is a contradiction proving that
$K_x=\{y_0\}$. 

Put a function $\psi$ from $\Ch (\A)$ into $\Ch (\B)$ by 
$\psi (x)=y_0$, the unique element of $K_x$. Then by the definition of 
$K_x$, we have that $|Tf(\psi (x))|=|f(x)|$ for every $f\in \A^{-1}$ 
with 
$|f(x)|=1=\|f\|_{\infty(X)}$. 
We show that $|Tf(\psi (x))|=|f(x)|$ holds 
for every $f\in \A^{-1}$. Let $f\in \A^{-1}$. Then there exists an 
$h\in P_{\A}^0(x)$ with $\sigma_{\pi}(fh)=\{f(x)\}$. Thus we see that
\[
|f(x)|=\|fh\|_{\infty(X)}=\|TfTh\|_{\infty(Y)}
\ge |Tf(\psi (x))Th(\psi (x))|.
\]
Since $|Th(\psi (x))|=1$ by the definition of $K_x$, we see that
$|Tf(\psi (x))|\le |f(x)|$. On the other hand, there exists an $H'\in 
P_{\B}^0(\psi (x))$ with 
$$\sigma_{\pi}(TfH')=\{Tf(\psi (x))\}.$$ 
Choose a function $h'\in \A^{-1}$ with $Th'=H'$. Then 
$$\|h'\|_{\infty (X)}=\|H'\|_{\infty (Y)}=1.$$ 
We also see that $|h'(x)|=1$. 
(Suppose not. Then $|h'(x)|<1$. Then 
there exists an $h'' \in P_{\A}^0(x)$ with $\|h'h''\|<1$. 
We also have that 
\[
\|h'h''\|_{\infty(X)}=\|H'Th''\|_{\infty(Y)}
\ge |H'(\psi (x))||Th''(\psi (x))|.
\] 
By the definition of $K_x$, we have that $|Th''(\psi (x))|=1$ 
since $h''(x)=1=\|h''\|_{\infty(X)}$. 
Since $H'\in P_{\B}^0(\psi (x))$, we have that 
$|H'(\psi (x))|=1$, so $\|h'h''\|_{\infty (X)}\ge 1$, 
which is a contradiction. ) 
Thus we have that
\[
|Tf(\psi (x))|=\|TfH'\|_{\infty(Y)}=
\|fh'\|_{\infty(X)}\ge |f(x)h'(x)|=|f(x)|,
\]
so that $|Tf(\psi (x))|=|f(x)|$. 

We show that $\psi$ is continuous. Suppose that $x\in \Ch (\A)$ and 
$\{x_{\alpha}\}$ is a net which converges to $x$. Then for 
every $f\in \A^{=1}$ we have that 
\[
|Tf(\psi (x_{\alpha}))|=|f(x_{\alpha})| \to |f(x)|=|Tf(x_{\alpha})|.
\]
Since $T$ is a surjection, we see that 
$|F(\psi (x_{\alpha}))| \to |F(x)|$ holds for every $F\in \B^{-1}$. 
By the Alexandroff theorem the original topology on $Y$ and the 
weak topology on $Y$ induced by the family $\{|F|: F\in \B^{-1}\}$ 
coincides. So we see that 
$\psi (x_{\alpha}) \to \psi (x)$. Thus we see that $\psi$ is continuous on 
$\Ch (\A)$. 

By the first part of the proof we have that
$|Tf(y)|=|f(\phi(y))|$ holds for every $f\in \A^{-1}$ and $y\in 
\Ch (\B)$, so we see that 
\[
|Tf(y)|=|f(\phi (y))|=|Tf(\psi(\phi(y)))|
\]
hold for 
every $f\in \A^{-1}$. Since $T(\A^{-1})=\B^{-1}$ and $\{|F|:F\in \B^{-1}\}$ 
separates the points of $Y$, we see that$\psi\circ\phi(y)=y$ 
holds for every $y\in \Ch (\B)$. 
Since 
\[
|f(x)|=|Tf(\psi (x))|=|f(\phi (\psi (x)))|
\]
hold for every $x\in \Ch (\A)$, 
in a way similar, we also see that 
$\phi\circ\psi (x)=x$ holds for every $x\in \Ch (\A)$. It follows that
$\phi$ and $\psi$ are bijections and $\phi ^{-1}=\psi$. Since $\phi$ and 
$\psi$ are continuous, we see that $\phi$ is a homeomorphism from 
$\Ch (\B)$ onto $\Ch (\A)$. 
\end{proof}

Note that under the hypotheses of Theorem \ref{0korovkin} the map $T$ need not 
be extended to a linear map from $\A$ into $\B$. 
On the other hand we show that two unital commutative 
C$^*$-algebras are algebraically isomorphic to each other if there 
exists a surjective group homomorphisms  
which preserves the norm (cf. Corollary \ref{cx}). 
The results compares with the following 
example which is presented in \cite[Remark 1.7.8.]{ze}.

\begin{example}\cite{ze}\label{zelazko}
Let $X_1=[0,1] \cup \{2 \}$ and $X_2=[-1,-\frac12 ] \cup [\frac12,1]$. 
Suppose that $T$ is a group isomorphism from $C(X_1)^{-1}$ onto 
$C(X_2)^{-1}$ such that

\begin{equation*}
Tf(y)=
\begin{cases}
f(y+1), &y\in [-1,-\frac12] \\
f(2)f(y), & y\in [\frac12, 1].
\end{cases}
\end{equation*}
Then $T$ is a homeomorphism with respect to the relative 
topologies on $C(X_1)^{-1}$ and $C(X_2)^{-1}$ which are induced by 
the supremum norms on $C(X_1)$ and $C(X_2)$ respectively. 
On the other hand $C(X_1)$ is not algebraically isomorphic to 
$C(X_2)$ since $X_1$ and $X_2$ is not homeomorphic.
\end{example}

In this example $\sup \frac{\|Tf\|_{\infty (X_2)}}
{\|f\|_{\infty (X_1)}}=\infty$ and $\inf
\frac{\|Tf\|_{\infty (X_2)}}{\|f\|_{\infty (X_1)}}=0$. 
In Corollary \ref{cx} we show that if a group homomorphism from 
$C(X)^{-1}$ onto $C(Y)^{-1}$ for compact Hausdorff spaces $X$ and $Y$ 
satisfies that 
$\sup \frac{\|Tf\|_{\infty (X_2)}}
{\|f\|_{\infty (X_1)}}<\infty$ and $\inf
\frac{\|Tf\|_{\infty (X_2)}}{\|f\|_{\infty (X_1)}}>0$, then $C(X)$ is 
algebraically isomorphic to $C(Y)$. 
Note that such a $T$ may not be extended an algebra isomorphism (cf. Example 
\ref{nonisomorphism}).

\begin{cor}\label{normpreserving}
Let $\A$ and $\B$ be uniform algebras on compact Hausdorff spaces 
$X$ and $Y$ respectively. Suppose that $T$ is a group homomorphism 
from $\A^{-1}$ onto $\B^{-1}$ which satisfies that 
$\inf
\frac{\|Tf\|_{\infty (X_2)}}{\|f\|_{\infty (X_1)}}>0$ and 
$\sup \frac{\|Tf\|_{\infty (X_2)}}
{\|f\|_{\infty (X_1)}}<\infty$
Then there is a homeomorphism $\phi$ from $\Ch (\B)$ onto $\Ch (\A)$ 
such that an equality $|Tf(y)|=|f(\phi (y))|$ holds for every 
$f\in \A^{-1}$ and $y\in \Ch (\B)$. 
\end{cor}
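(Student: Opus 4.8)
The plan is to deduce Corollary \ref{normpreserving} from Theorem \ref{0korovkin} by first promoting the two-sided comparability of the ratios $\|Tf\|_{\infty(Y)}/\|f\|_{\infty(X)}$ to an honest equality, using that $T$ respects powers. Put $c=\inf_{f\in\A^{-1}}\|Tf\|_{\infty(Y)}/\|f\|_{\infty(X)}>0$ and $C=\sup_{f\in\A^{-1}}\|Tf\|_{\infty(Y)}/\|f\|_{\infty(X)}<\infty$. Fix $f\in\A^{-1}$; since $f$ is invertible, $\|f\|_{\infty(X)}>0$, and for every positive integer $n$ we have $f^n\in\A^{-1}$ with $T(f^n)=(Tf)^n$ because $T$ is a group homomorphism. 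As the supremum norm on a compact space is multiplicative on powers in a uniform algebra, applying the comparability bounds to $f^n$ yields
\[
c\le\frac{\|(Tf)^n\|_{\infty(Y)}}{\|f^n\|_{\infty(X)}}=\left(\frac{\|Tf\|_{\infty(Y)}}{\|f\|_{\infty(X)}}\right)^{n}\le C
\qquad(n=1,2,\dots).
\]
Letting $n\to\infty$, the ratio $\|Tf\|_{\infty(Y)}/\|f\|_{\infty(X)}$ cannot exceed $1$ (otherwise the middle term eventually exceeds $C$) nor be less than $1$ (otherwise it eventually drops below $c$); hence $\|Tf\|_{\infty(Y)}=\|f\|_{\infty(X)}$ for every $f\in\A^{-1}$.

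Next I would use multiplicativity of $T$ once more: given $f,g\in\A^{-1}$ we have $fg\in\A^{-1}$ and $T(fg)=TfTg$, so the norm identity just obtained gives
\[
\|TfTg\|_{\infty(Y)}=\|T(fg)\|_{\infty(Y)}=\|fg\|_{\infty(X)}
\]
for all $f,g\in\A^{-1}$. Thus $T$ is a surjection from $\A^{-1}$ onto $\B^{-1}$ satisfying precisely the hypothesis of Theorem \ref{0korovkin}, and invoking that theorem produces the homeomorphism $\phi$ from $\Ch(\B)$ onto $\Ch(\A)$ with $|Tf(y)|=|f(\phi(y))|$ for every $f\in\A^{-1}$ and $y\in\Ch(\B)$.

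I do not anticipate a real obstacle: the argument is a ``linearize by passing to powers'' reduction followed by a citation of Theorem \ref{0korovkin}. The only points needing care are routine: that $f^n$ and $fg$ stay in $\A^{-1}$ (it is a group), that $\|f\|_{\infty(X)}\neq0$ so the displayed quotients are legitimate, and that $\|h^n\|_{\infty}=\|h\|_{\infty}^{\,n}$ on both $\A$ and $\B$ because for a uniform algebra the supremum norm coincides with the spectral radius. It is worth keeping in mind that $T$ need not be injective, but since Theorem \ref{0korovkin} was stated without any injectivity hypothesis this does not affect the argument.
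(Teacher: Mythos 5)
Your proof is correct and follows essentially the same route as the paper: pass to powers $f^n$ using the group homomorphism property and the multiplicativity of the supremum norm on powers to force $\|Tf\|_{\infty(Y)}=\|f\|_{\infty(X)}$, then use $T(fg)=TfTg$ to verify the hypothesis of Theorem \ref{0korovkin}. The only cosmetic difference is that you phrase the power argument as squeezing the ratio between $c$ and $C$, while the paper phrases it as a contradiction with the ratio tending to $0$ or $\infty$; these are the same argument.
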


\begin{proof}
First we show that $\|Tf\|_{\infty (Y)}=\|f\|_{\infty (X)}$ holds for 
every $f\in \A$. 
Suppose not. If $\|Tf_1\|_{\infty (Y)}>\|f_1\|_{\infty (X)}$ for some 
$f_1\in \A^{-1}$, then we have that $\lim_{n\to\infty}
\frac{\|Tf_1^n\|_{\infty (Y)}}
{\|f_1^n\|_{\infty (X)}}=\infty$ for $T$ is multiplicative. 
If $\|Tf_2\|_{\infty (Y)}<\|f_2\|_{\infty (X)}$ for some 
$f_2\in \A^{-1}$, then we have that $\lim_{n\to\infty}
\frac{\|Tf_2^n\|_{\infty (Y)}}
{\|f_2^n\|_{\infty (X)}}=0$. In any case we have a contradiction. 
Thus we see that $\|Tf\|_{\infty (Y)}=\|f\|_{\infty (X)}$ for every 
$f\in \A^{-1}$.
Since $T$ preserves multiplication we see that
\[
\|TfTg\|_{\infty (Y)}=\|fg\|_{\infty (X)}
\]
holds for every pair $f$ and $g$ in $\A$.
Then by Theorem \ref{0korovkin} we see that the conclusion holds. 
\end{proof}

\begin{cor}\label{cx}
Suppose that $X$ and $Y$ are compact Hausdorff spaces and 
$T:C(X)^{-1}\to C(Y)^{-1}$ is a surjective group homomorphism. 
If $\sup\frac{\|Tf\|_{\infty (Y)}}{\|f\|_{\infty (X)}}<\infty$ and 
$\inf\frac{\|Tf\|_{\infty (Y)}}{\|f\|_{\infty (X)}}>0$, then $C(X)$ is 
isometrically and algebraically isomorphic to $C(Y)$. 
\end{cor}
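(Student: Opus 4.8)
The plan is to reduce at once to Corollary~\ref{normpreserving}. The algebras $C(X)$ and $C(Y)$ are themselves uniform algebras on the compact Hausdorff spaces $X$ and $Y$ (they are uniformly closed subalgebras containing the constants, and by Urysohn's lemma they separate points), and a surjective group homomorphism $T\colon C(X)^{-1}\to C(Y)^{-1}$ with $\inf\frac{\|Tf\|_{\infty(Y)}}{\|f\|_{\infty(X)}}>0$ and $\sup\frac{\|Tf\|_{\infty(Y)}}{\|f\|_{\infty(X)}}<\infty$ is precisely an instance of the hypotheses of that corollary. Hence there is a homeomorphism $\phi$ from $\Ch(C(Y))$ onto $\Ch(C(X))$ such that $|Tf(y)|=|f(\phi(y))|$ for every $f\in C(X)^{-1}$ and $y\in\Ch(C(Y))$.

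The next step is to identify the Choquet boundaries. For $C(X)$ with $X$ compact Hausdorff, every point of $X$ is a $p$-point: given $x\in X$ and an open neighborhood $U$ of $x$, Urysohn's lemma furnishes $h\in C(X)$ with $h(x)=1$, $0\le h\le 1$ and $h=0$ off $U$, and then $(1+h)/2$ is a peaking function whose peak set is contained in $U$; intersecting over all such $U$ shows that $\{x\}$ is a $p$-set. Therefore $\Ch(C(X))=X$, and likewise $\Ch(C(Y))=Y$, so that $\phi$ is a homeomorphism from $Y$ onto $X$.

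Finally, since $\phi\colon Y\to X$ is a homeomorphism, the composition map $g\mapsto g\circ\phi$ is an isometric algebra isomorphism of $C(X)$ onto $C(Y)$, which is the desired conclusion. I do not anticipate any genuine obstacle; the only points requiring care are the verification that $C(X)$ is a uniform algebra on $X$ in the sense used in the paper and that its Choquet boundary is all of $X$. In particular I would not try to show that $T$ itself extends to this isomorphism: as the remark following the statement indicates (cf.\ Example~\ref{nonisomorphism}), in general it cannot be so extended.
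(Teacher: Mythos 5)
Your proposal is correct and follows the same route as the paper: the paper's proof likewise just invokes Corollary~\ref{normpreserving} to obtain a homeomorphism between $X$ and $Y$ (implicitly using $\Ch(C(X))=X$ and $\Ch(C(Y))=Y$) and then concludes via the induced composition isomorphism. Your explicit verification that $C(X)$ is a uniform algebra with Choquet boundary all of $X$, and your remark that $T$ itself need not extend, simply fill in details the paper leaves tacit.
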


\begin{proof}
By Proposition \ref{normpreserving} there is a homeomorphism from 
$X$ onto $Y$, so that 
$C(X)$ is isometrically and algebraically isomorphic to $C(Y)$.
\end{proof}

Note that a norm preserving group homomorphism from $C(X)^{-1}$ onto 
$C(Y)^{-1}$ need not be injective. 

\begin{example}\label{new}
Let $X$ be the closed unit interval 
of the real numbers. Put $T:C(X)^{-1}\to C(X)^{-1}$ be defined as 
$T(f)=f^2/|f|$ for each $f\in C(X)^{-1}$. Then $T$ is 
a norm-preserving group homomorphism. 
For any $F\in C(X)^{-1}$, choose $f\in C(X)^{-1}$ with 
$f^2=F|F|$. Such an $f$ exists 
since $C(X)^{-1}$ is closed under 
the square-root-operation (cf. \cite{hm}), 
that is, for every $h\in C(X)^{-1}$ there 
exists $g\in C(X)^{-1}$ with $g^2=h$. 
Then $Tf=F$, so we see that 
 $T$ is a surjection onto $C(X)^{-1}$.
Since $T(f)=T(-f)$, $T$ is not injective. 
\end{example}

Note also that the map $T$ in Corollary \ref{cx} 
need not be extended to an algebra isomorphism 
from $C(X)$ onto $C(Y)$ even if $T$ is injective. 
An example is as follows.

\begin{example}\label{nonisomorphism}
Let $C(X)$ be the usual Banach algebra of complex-valued continuous 
functions on a compact Hausdorff space $X$ and $A$ the 
direct sum $C(X)\oplus C(X)$. 
Note that $A$ is isometrically isomorphic to 
$C(X_1\cup X_2)$, where $X_1$ and $X_2$ are two copies of $X$. 
Let $T$ be a map from $A^{-1}$ into itself defined by 
\[
T(f\oplus g)= \frac{f^2g}{|fg|}\oplus \frac{f^3g^2}{|f^3g|} \quad 
f\oplus g \in A.
\]
Then $T$ is a norm preserving group automorphism on $A^{-1}$ while 
$T$ is not extended to a linear map on $A$.
\end{example}

\begin{proof}
Clearly $T$ is a norm preserving group endomorphism on $A^{-1}$. 
We only show that $T$ is a bijection. 
Let $h_1\oplus h_2$ be an arbitrary function in $A^{-1}$. 
Put $f=(h_1^2|h_2|)/(h_2|h_1|)$ and $g=(h_2^2|h_1|^3)/(h_1^3|h_2|)$. 
Then by a simple calculation that $T(f\oplus g)=h_1\oplus h_2$ and 
this $f\oplus g$ is the only a function with 
$T(f\oplus g)=h_1\oplus h_2$.
\end{proof}

Even if a group isomorphism between the invertible groups of uniform 
algebras preserve the norm, it can be discontinuous.

\begin{example}
Let $C_{\mathbb R}([0,1])$ denote the real Banach space of all real-valued 
continuous functions on the closed unit interval $[0,1]$ and 
$\{u_{\lambda}\}_{\lambda \in \Lambda}$ a basis for 
$C_{\mathbb R}([0,1])$ as a 
real linear space such that $1\in \{u_{\lambda}\}_{\lambda \in \Lambda}$ and 
$\|u_{\lambda}\|_{\infty ([0,1])}=1$ for every $\lambda \in \Lambda$. 
Suppose that $\{u_n\}$ and $\{v_n\}$ are disjoint countable subsets of 
$\{u_{\lambda}\}_{\lambda \in \Lambda}$ and $u_1=1$. 
Without loss of generality we may assume that 
$u_n([0,1])=[0,1]$ and $v_n([0,1])=[0,1]$ for
every positive integer $n$. 
Let $R$ be the linear isomorphism from $C_{\mathbb R}([0,1])$ onto 
itself such that
\begin{equation*}
R(u_{\lambda})=
\begin{cases}
u_{\lambda}, & \text{if $u_{\lambda}\in \{u_{\lambda}\}_{\lambda \in 
\Lambda}\}\setminus \left(\{u_n\}\cup\{v_n\}\right)$} \\
\left(\frac{1}{3n}+\frac{1}{2}\right)v_n, & 
\text{if $u_{\lambda}=v_n$ for some $n$}, \\
nu_n, & 
\text{if $u_{\lambda}=u_n$ for some $n$}.
\end{cases}
\end{equation*}
By a simple calculation we have that $R$ is not a bounded as linear 
transformation on the Banach space $C_{\mathbb R}([0,1])$ and 
$2R-I$ is a linear isomorphism from $C_{\mathbb R}([0,1])$ onto itself, 
where $I$ is the identity operator. Put 
$T:\exp C([0,1]) \to \exp C([0,1])$ defined by 
\[
T(\exp f) = \exp (f-i2R({\mathrm Im}f)), \exp f \in \exp C([0,1]), 
\]
where ${\mathrm Im}f$ denotes the imaginary part of $f$. 
Since $R(u_1)=u_1$ and since $\exp C([0,1])=
(C([0,1]))^{-1}$ by \cite[Corollary III.7.4]{ga}, 
it is easy to see that $T$ is well-defined and 
is a group isomorphism 
form $(C([0,1]))^{-1}$ onto itself such that $\|Tg\|_{\infty ([0,1])} 
=\|g\|_{\infty ([0,1])}$ for every $g\in (C([0,1]))^{-1}$. 
Put $f_n=\frac{i}{\sqrt{n}}u_n$ for every positive integer $n$. 
Then $\|\exp f_n-1\|_{\infty ([0,1])}\to 0$ as $n\to \infty$. On the other 
hand we have that $T(\exp f_n)=\exp ((\frac{1}{\sqrt{n}}-2\sqrt{n})iu_n)$, 
so that $T(\exp f_n) \not\to 1=T1$ 
since $u_n([0,1])=[0,1]$ for every $n$; $T$ is not continuous. Put 
$g_n=-\frac{i}{3\sqrt{n}}v_n$. We see in a way similar to the above that 
$\|\exp g_n-1\|_{\infty ([0,1])}\to 0$ and $T^{-1}(\exp g_n)\not\to 1=
T1$; $T^{-1}$ is not continuous.
\end{example}

\section{Multiplicatively spectrum-preserving maps between invertible 
groups}

The following corollary is a version of a theorem of Luttman and Tonev 
\cite{lt}. Another slight 
generalization of the theorem of Luttman and 
Tonev is given in the  last section (cf. \ref{glt}).

\begin{cor}\label{vlt}
Let $\A$ and $\B$ be uniform algebras on compact Hausdorff spaces 
$X$ and $Y$ respectively and $T$ a map from 
$\A^{-1}$ onto $\B^{-1}$ such that the inclusion
\[
\sigma_{\pi}(TfTg)\subset \sigma_{\pi}(fg)
\]
holds for every pair $f,g\in \A^{-1}$. 
Then $(T1)^2=1$ and there exists a homeomorphism $\phi$ from 
$\Ch (\B)$ onto $\Ch (\A)$ such that the equality 
\[
Tf(y) = T1(y)f(\phi (y)), \quad y\in \Ch (\B)
\]
holds for every $f\in \A^{-1}$. Thus $T/T1$ is extended to an 
isometrical algebra isomorphism from $\A$ onto $\B$. 
In particular, $T$ is extended to an isometrical algebra 
isomorphisms from $\A$ onto $\B$ if $T1=1$. 
\end{cor}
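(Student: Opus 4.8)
The plan is to combine the norm-preserving structure theorem (Theorem~\ref{0korovkin}) with the hypothesis on peripheral spectra to pin down both the modulus and the argument of $Tf$. First I would observe that the inclusion $\sigma_\pi(TfTg)\subset\sigma_\pi(fg)$ forces $\|TfTg\|_{\infty(Y)}\le\|fg\|_{\infty(X)}$; applying this with $f,g$ replaced by $f^{-1},g^{-1}$ (which lie in $\A^{-1}$) and using that $T$ is onto $\B^{-1}$ gives the reverse inequality, so that $T$ is in fact multiplicatively norm-preserving in the sense of Theorem~\ref{0korovkin}. That yields a homeomorphism $\phi\colon\Ch(\B)\to\Ch(\A)$ with $|Tf(y)|=|f(\phi(y))|$ for all $f\in\A^{-1}$, $y\in\Ch(\B)$. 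In particular $|T1|=1$ on $\Ch(\B)$, and setting $f=g=1$ in the peripheral-spectrum inclusion gives $\sigma_\pi((T1)^2)\subset\sigma_\pi(1)=\{1\}$, whence $(T1)^2=1$ on $\Ch(\B)$; since $\Ch(\B)$ is a boundary for $\B$ and $(T1)^2\in\B$, we get $(T1)^2=1$.

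Next I would fix $y\in\Ch(\B)$ and an arbitrary $f\in\A^{-1}$ and compute the scalar $Tf(y)/T1(y)$. The idea is to choose, via Lemma~\ref{bishop}, a peaking function $h\in P_{\A}^0(\phi(y))$ with $\sigma_\pi(fh)=\{f(\phi(y))\}$, and then examine $\sigma_\pi(TfTh)\subset\sigma_\pi(fh)=\{f(\phi(y))\}$. Because the modulus identity already gives $|Tf(y)|=|f(\phi(y))|$ and $|Th(y)|=1=\|Th\|_{\infty(Y)}$, one knows $|Tf(y)Th(y)|=\|fh\|_{\infty(X)}$, so $Tf(y)Th(y)$ is a peripheral value of $TfTh$ and hence equals $f(\phi(y))$. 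Doing the same with $f$ replaced by $1$ gives $T1(y)Th(y)=1\cdot h(\phi(y))=1$ (here using $h(\phi(y))=1$), so $Th(y)=T1(y)^{-1}=T1(y)$. Substituting back, $Tf(y)T1(y)=f(\phi(y))$, i.e. $Tf(y)=T1(y)f(\phi(y))$ for every $y\in\Ch(\B)$, which is the claimed pointwise formula.

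With the formula $Tf(y)=T1(y)f(\phi(y))$ on $\Ch(\B)$ in hand, I would define the map $\tilde T$ on all of $\A$ by $\widehat{\tilde Tf}=(T1)\cdot(f\circ\phi)$ and check it is an algebra isomorphism. The homeomorphism $\phi$ induces an isometric algebra isomorphism $f\mapsto f\circ\phi$ from $\A$ (viewed on $\Ch(\A)$) onto $\B$ (viewed on $\Ch(\B)$)---this is standard for uniform algebras, since each is determined by its restriction to its Choquet boundary. Multiplication by the fixed unimodular function $T1$ with $(T1)^2=1$ is then a well-defined isometric linear bijection of $\B$ onto itself, though not multiplicative; but $T/T1$, which on $\Ch(\B)$ sends $f$ to $f\circ\phi$, \emph{is} multiplicative, so $T/T1$ extends to the isometric algebra isomorphism $f\mapsto f\circ\phi$ from $\A$ onto $\B$. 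When $T1=1$ this says $T$ itself extends to that isomorphism. I expect the main technical point to be the argument in the second paragraph ensuring that the peripheral values multiply correctly---one has to be careful that $Tf(y)Th(y)$ genuinely attains the supremum $\|TfTh\|_{\infty(Y)}$ so that it lies in $\sigma_\pi(TfTh)$, which is exactly where the modulus identity from Theorem~\ref{0korovkin} and the peaking property of $h$ are both needed; the extension step itself is then routine uniform-algebra bookkeeping.
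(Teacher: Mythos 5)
Your proposal is correct and follows essentially the same route as the paper: deduce the multiplicative norm identity from the peripheral inclusion, invoke Theorem~\ref{0korovkin} to get $\phi$ and the modulus identity, and then use the peaking functions of Lemma~\ref{bishop} to pin down the argument of $Tf(y)$; your variant of handling general $T1$ directly (computing $Th(y)=T1(y)$ for $h\in P_{\A}^0(\phi(y))$ rather than first normalizing to $\tilde T=T/T1$ as the paper does) is a harmless reorganization. One justification in your first step is off, though it does not damage the proof: the inclusion $\sigma_{\pi}(TfTg)\subset\sigma_{\pi}(fg)$ does not merely give $\|TfTg\|_{\infty(Y)}\le\|fg\|_{\infty(X)}$ --- it gives equality outright, because $\sigma_{\pi}(TfTg)$ is nonempty and every element of it has modulus $\|TfTg\|_{\infty(Y)}$ while also lying in $\sigma_{\pi}(fg)$ and hence having modulus $\|fg\|_{\infty(X)}$. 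Your proposed ``reverse inequality'' via $f^{-1},g^{-1}$ and surjectivity would not work as stated (you do not yet know that $T$ respects inverses, and surjectivity alone does not produce an inverse map), so you should delete that clause and use the nonemptiness of the peripheral spectrum instead, which is exactly what the paper does.
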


\begin{proof}
First we consider the case where $T1=1$. For every pair $f$ and $g$ in 
$\A^{-1}$ the equality $\|TfTg\|_{\infty (Y)}=\|fg\|_{\infty (X)}$ 
holds since 
$\sigma_{\pi}(TfTg)\subset \sigma_{\pi}(fg)$. 
Then by Theorem \ref{0korovkin} there exists a homeomorphism $\phi$ 
from $\Ch (\B)$ onto $\Ch (\A)$ such that the equality 
$|Tf(y)|=|f(\phi (y))|$ holds for every $y\in \Ch (\A)$ and $f\in \A^{-1}$. 

Let $y\in \Ch (\B)$ and $u\in P_{\A}^0(\phi (y)$. Since we have assumed that 
$T1=1$, we see that
\[
\sigma_{\pi}(Tu)=\sigma_{\pi}(TuT1)\subset \sigma_{\pi}(u\cdot 1)=\{1\},
\]
that is, $Tu \in P_{\B}^0$ and $\sigma_{\pi}(Tu)=\{1\}$. 
On the other hand since $|Tu(y)|=|u(\phi (y))|=1$, we see that 
$Tu(y)=1$ and so $Tu\in P_{\B}^0(y)$. Thus we conclude that 
\[
T(P_{\A}^0(\phi (y)))\subset P_{\B}^0(y). 
\]

Let $f\in \A^{-1}$ and $y\in \Ch (\B)$. 
Then by Lemma \ref{bishop} there exists a $u\in P_{\A}^0(\phi (y))$ 
such that $\sigma_{\pi}(fu)=\{f(\phi (y))\}$.  Then we have that 
$\sigma_{\pi}(TfTu)=\{f(\phi (y))\}$. Thus we see that
\[
|f(\phi (y)|=\|TfTu\|_{\infty (Y)}\ge |Tf(y)Tu(y)|=|Tf(y)|=|f(\phi (y))|
\]
hold since $Tu\in P_{\B}^0(y)$, so $\|TfTu\|_{\infty (Y)}=
|Tf(y)|$. It follows that $Tf(y)=f(\phi (y))$ since 
$\sigma_{\pi}(TfTu)=\{f(\phi (y))\}$. 

We consider the general case and prove that $(T1)^2=1$. First we have that 
$\|T1\|_{\infty (Y)}=1$ since $\sigma_{\pi}(T1T1)\subset 
\sigma_{\pi}(1)=\{1\}$ and $\|(T1)^2\|_{\infty (Y)}=\|T1\|_{\infty (Y)}$. 
Suppose that there exists a $y\in \Ch (\B)$ with $(T1(y))^2\ne 1$. 
Then we see that $|(T1(y)|<1$ since $\sigma_{\pi}((T1)^2)\subset \{1\}$. 
Then by Lemma \ref{bishop} there exists a $U\in P_{\B}^0(y)$ such that 
$\|T1U\|_{\infty (Y)}<1$. Since $T\A^{-1}=\B^{-1}$, 
there exists a $u\in \A^{-1}$ 
with $Tu=U$. Then $\sigma_{\pi}(T1U)=\{1\}$ since 
$\sigma_{\pi}(T1U)\subset \sigma_{\pi}(u1)=\{1\}$ and $\sigma_{\pi}(T1U)
\ne \emptyset$. This contradicts to $\|T1U\|_{\infty (Y)}<1$. 
We conclude that $(T1(y))^2=1$ for every $y\in \Ch (\B)$, and so 
$(T1)^2=1$ for $\Ch (\B)$ is a boundary for $\B$. 
Put $\tilde T=T/T1$. Then $\tilde T$ is a well-defined map from 
$\A^{-1}$ into $\B^{-1}$. By a simple calculation we see that 
$\tilde T(\A^{-1})=\B^{-1}$. We see that 
$\tilde T1=1$ by the definition of $\tilde T$ and that 
\[
\sigma_{\pi}(\tilde Tf\tilde Tg)\subset \sigma_{\pi}(fg)
\]
holds for every pair $f$ and $g$ in $\A^{-1}$ since $(T1)^2=1$, . 
Then by the first part of the 
proof there exists a homeomorphism $\phi$ from $\Ch (\B)$ onto 
$\Ch (\A)$ such that 
\[
\tilde Tf(y)=f(\phi (y)), \quad y\in \Ch (\B)
\]
holds for every $f\in \A^{-1}$. Thus we see that
\[
Tf(y)=T1(y)f(\phi (y)), \quad y\in \Ch (\B)
\]
holds for every $f\in \A^{-1}$. 
Since $\Ch (\B)$ is a boundary for $\B$, the restriction map $R:\B \to 
\B|\Ch (\B)$ is a bijective isometrical algebra isomorphism. 
Put $T_e:\A\to \B|\Ch (\B)$ by 
$T_ef(y)=f(\phi (y)$ for $f\in \A$. Then $T_e$ is well-defined and 
it is easy to see that $R^{-1}\circ T_e$ is an isometrical algebra 
isomorphism from $\A$ onto $\B$ which is an extension of $T$. 
\end{proof}

\begin{cor}\label{gh1}
Let $\A$ and $\B$ be uniform algebras and 
$T$ a group homomorphism from $\A^{-1}$ onto $\B^{-1}$ which satisfies that 
\[
\sigma_{\pi}(Tf)\subset \sigma_{\pi}(f)
\]
holds for every $f\in \A^{-1}$. 
Then $T$ is extended to an isometrical algebra isomorphism from 
$\A$ onto $\B$.
\end{cor}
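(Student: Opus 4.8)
The plan is to deduce Corollary \ref{gh1} from Corollary \ref{vlt} by first promoting the one-variable hypothesis $\sigma_{\pi}(Tf)\subset\sigma_{\pi}(f)$ to the two-variable hypothesis $\sigma_{\pi}(TfTg)\subset\sigma_{\pi}(fg)$ using multiplicativity of $T$, and then showing that the ambiguity $T1$ produced by Corollary \ref{vlt} is forced to be $1$ because $T$ is a group homomorphism.

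First I would record the elementary consequences of $\sigma_{\pi}(Tf)\subset\sigma_{\pi}(f)$. Since spectral radius equals supremum norm on a uniform algebra, this inclusion gives $\|Tf\|_{\infty(Y)}\le\|f\|_{\infty(X)}$ for every $f\in\A^{-1}$; applying this to $f^{-1}$ and using $Tf^{-1}=(Tf)^{-1}$ (group homomorphism) yields $\|(Tf)^{-1}\|_{\infty(Y)}\le\|f^{-1}\|_{\infty(X)}$. One then argues as in the proof of Corollary \ref{normpreserving}: iterating, $\|(Tf)^n\|=\|Tf^n\|\le\|f^n\|$ and $\|(Tf)^{-n}\|\le\|f^{-n}\|$, so $\mathrm{r}(Tf)\le\mathrm{r}(f)$ and $\mathrm{r}((Tf)^{-1})\le\mathrm{r}(f^{-1})$, whence $\sigma(Tf)\subset\{z:\mathrm{r}(f)^{-1}... \}$; more directly, combining the two norm inequalities and multiplicativity forces $\|Tf\|_{\infty(Y)}=\|f\|_{\infty(X)}$ for all $f\in\A^{-1}$. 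Then for $f,g\in\A^{-1}$, $\sigma_{\pi}(TfTg)=\sigma_{\pi}(T(fg))\subset\sigma_{\pi}(fg)$, so the hypothesis of Corollary \ref{vlt} is met.

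By Corollary \ref{vlt}, $(T1)^2=1$ and there is a homeomorphism $\phi:\Ch(\B)\to\Ch(\A)$ with $Tf(y)=T1(y)f(\phi(y))$ on $\Ch(\B)$ for all $f\in\A^{-1}$. But $1\in\A^{-1}$ and $T$ is a group homomorphism, so $T1=T(1\cdot 1)=(T1)^2=1$. Hence $Tf(y)=f(\phi(y))$ on $\Ch(\B)$, and the last assertion of Corollary \ref{vlt} (with $T1=1$) says $T$ extends to an isometrical algebra isomorphism from $\A$ onto $\B$. Concretely, the restriction map $R:\B\to\B|\Ch(\B)$ is an isometric algebra isomorphism, $T_e f=f\circ\phi$ defines an isometric algebra homomorphism $\A\to\B|\Ch(\B)$, and $R^{-1}\circ T_e$ is the desired extension.

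The only real issue is the promotion of the norm inequality to a norm equality; everything after that is formal. Here the homomorphism property is essential and is exactly what fails in Example \ref{zelazko}: without $Tf^{-1}=(Tf)^{-1}$ one cannot control $\|Tf\|$ from below. I expect this step — the symmetric pair of estimates $\|Tf\|\le\|f\|$ and $\|(Tf)^{-1}\|\le\|f^{-1}\|$ feeding a power-trick as in Corollary \ref{normpreserving} — to be the crux, and it is short.
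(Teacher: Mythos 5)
Your proposal is correct and follows essentially the paper's own route: deduce $T1=1$ from the group homomorphism property and use $TfTg=T(fg)$ to get $\sigma_{\pi}(TfTg)\subset\sigma_{\pi}(fg)$, then invoke Corollary \ref{vlt}. The preliminary paragraph establishing the norm equality $\|Tf\|_{\infty(Y)}=\|f\|_{\infty(X)}$ is superfluous (and not the crux): Corollary \ref{vlt} requires only the two-variable peripheral inclusion, which you obtain in one line from multiplicativity, so everything in your argument after that line already suffices.
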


\begin{proof}
We see that $T1=1$ since $T$ is a surjective group homomorphism. 
We also see that $\sigma_{\pi}(TfTg)\subset \sigma_{\pi}(fg)$ holds 
for every pair $f$ and $g$ in $\A^{-1}$ since $TfTg=Tfg$. 
The conclusion follows from Corollary \ref{vlt}.

\end{proof}

\begin{cor}\label{commutative}
Let $A$ be a unital semisimple commutative Banach algebra and 
$B$ a unital commutative Banach algebra. Suppose that 
$T$ is a surjective group homomorphism from $A^{-1}$ onto 
$B^{-1}$ such that 
\[
\sigma (Tf)=\sigma (f)
\]
holds for every $f\in A^{-1}$. 
Then $T$ is extended to an algebra isomorphism from $A$ onto $B$. 
In particular, $B$ is semisimple.
\end{cor}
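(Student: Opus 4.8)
The plan is to reduce the general semisimple commutative case to the uniform-algebra case already handled in Corollary~\ref{gh1}, by passing to the uniform closures $\mcl A$ and $\mcl B$ of the Gelfand transforms. First I would record the standard fact that for a unital commutative Banach algebra the spectrum of $f$ equals $\hat f(M_A)$, hence the spectral radius $\rr(f)$ coincides with the supremum norm $\|\hat f\|_{\infty(M_A)}$; in particular on the semisimple side $A$ we may and do identify $f$ with $\hat f \in \mcl A \subset C(M_A)$. Since $\sigma(Tf)=\sigma(f)$ and $\sigma(f)$ is a nonempty compact subset of $\mathbb C$, the hypothesis forces $\rr(Tf)=\rr(f)$ for every $f\in A^{-1}$.

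The key structural step is to see that $T$ descends to a group homomorphism between the invertible groups of the uniform algebras $\mcl A$ and $\mcl B$. Because $T$ is a group homomorphism, $A^{-1}$ maps onto $B^{-1}$, and $\sigma(Tf)=\sigma(f)$, an element $f\in A^{-1}$ lies in the kernel direction corresponding to quasinilpotents exactly when $\sigma(f)=\{1\}$ in the appropriate sense; more concretely, since $B$ need not be semisimple, I would work with the Gelfand transform $\widehat{Tf}\in \mcl B$, noting $\sigma(Tf)=\widehat{Tf}(M_B)=\hat f(M_A)=\sigma(f)$. The map $f\mapsto \widehat{Tf}$ is then a group homomorphism from $\mcl A^{-1}$ (identified with $\hat A^{-1}$, which is dense in $\mcl A^{-1}$ in the relevant sense and in any case contains enough invertibles) into $\mcl B^{-1}$, and it is onto $\widehat{B^{-1}}$. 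One must check that this induced map is genuinely onto $\mcl B^{-1}$: here I would use that $\widehat{B}$ is dense in $\mcl B$ together with the Arens--Royden type stability of invertible groups under uniform closure, or simply argue on the level of peripheral spectra, which is all that Corollary~\ref{gh1} needs. Since $\sigma_{\pi}(\widehat{Tf})\subset\sigma_{\pi}(\widehat{Tf})=\sigma_{\pi}(Tf)$ and $\sigma_{\pi}(Tf)=\sigma_{\pi}(f)=\sigma_{\pi}(\hat f)$ (equality of full spectra gives equality of peripheral spectra), the induced map satisfies exactly the hypothesis of Corollary~\ref{gh1}.

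Applying Corollary~\ref{gh1} to this induced homomorphism yields a homeomorphism $\phi:\Ch(\mcl B)\to\Ch(\mcl A)$ and the representation $\widehat{Tf}(y)=\hat f(\phi(y))$ on $\Ch(\mcl B)$, which extends to an isometric algebra isomorphism $S:\mcl A\to\mcl B$ with $S\hat f=\widehat{Tf}$ for $f\in A^{-1}$. It remains to promote this to an algebra isomorphism $A\to B$. The composition-with-$\phi$ formula shows that for $f\in A^{-1}$ the Gelfand transform $\widehat{Tf}$ depends only on $\hat f$; defining $\tilde T:A\to B$ by $\tilde T a = $ (the unique element with Gelfand transform $S\hat a$) requires knowing that $S$ maps $\hat A$ into $\hat B$ and that the resulting map is well defined and multiplicative on all of $A$, not just on invertibles. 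For well-definedness one uses that $A^{-1}-A^{-1}$ spans $A$ (every element of a unital Banach algebra is a sum of two invertibles) together with additivity, which I would derive from multiplicativity of $S$ and the identity $S(\hat f\hat g)=S\hat f\, S\hat g$ applied to cleverly chosen invertible $f,g$ — this is the standard polarization-type argument used throughout this circle of results. Finally, semisimplicity of $B$ follows because $B$ is then algebraically isomorphic to the semisimple algebra $A$.

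The main obstacle I anticipate is not the uniform-algebra input (that is Corollary~\ref{gh1}) but the descent and ascent between $A$ (resp.\ $B$) and its uniform closure: showing that the induced map on $\mcl A^{-1}$ is onto $\mcl B^{-1}$ when $B$ is merely commutative (possibly with radical), and then showing that the isometric isomorphism $S:\mcl A\to\mcl B$ actually carries $\hat A$ onto $\hat B$ so that it lifts to an honest algebra isomorphism $A\to B$ rather than just between the uniform closures. I expect this to be handled by exploiting that $T$ is a group homomorphism defined on all of $A^{-1}$ (so its range really is $B^{-1}$, not a dense subgroup), together with the rigidity already built into the representation $Tf=T1\cdot(f\circ\phi)$ with $T1=1$.
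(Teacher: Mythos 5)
Your overall strategy — pass to the Gelfand transforms, reduce to the uniform algebras $\mcl A$ and $\mcl B$, and invoke Corollary~\ref{gh1} — is exactly the paper's, but the proposal leaves open precisely the step that carries the real content, and you half-acknowledge this yourself. Corollary~\ref{gh1} requires a group homomorphism defined on \emph{all} of $(\mcl A)^{-1}$ and onto \emph{all} of $(\mcl B)^{-1}$; what you have is a map on $\widehat{A^{-1}}$, which is only a dense subgroup, with image $\widehat{B^{-1}}$. Density plus an appeal to ``Arens--Royden type stability'' does not produce the extension: you must show that if $f_n\in A^{-1}$ is uniformly Cauchy with $1/M<|f_n|<M$ on $M_A$, then $Tf_n$ is uniformly Cauchy in $\mcl B$. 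The paper gets this from the estimate $\|Tf_n-Tf_m\|_{\infty(M_B)}\le\|\tfrac{Tf_n}{Tf_m}-1\|_{\infty(M_B)}\|Tf_m\|_{\infty(M_B)}\le M^2\|f_n-f_m\|_{\infty(M_A)}$, which uses the group homomorphism property to rewrite $\tfrac{Tf_n}{Tf_m}=T(\tfrac{f_n}{f_m})$ and the spectrum hypothesis to control its norm. Without this (or an equivalent continuity argument) there is no well-defined induced map on $(\mcl A)^{-1}$, and surjectivity onto $(\mcl B)^{-1}$ (proved by running the same Cauchy argument on $T^{-1}$, after first noting $T$ is injective because $\sigma(Tf/Tg)=\sigma(f/g)$) is likewise unaddressed.

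A second, smaller gap is the semisimplicity of $B$. Your construction produces an isomorphism $A\to\hat B=\Gamma_B(B)$, and your definition of $\tilde T a$ as ``the unique element with Gelfand transform $S\hat a$'' already presupposes that $\Gamma_B$ is injective — which is the semisimplicity you are trying to prove; concluding afterwards that ``$B$ is isomorphic to $A$, hence semisimple'' is circular. The paper closes this loop by observing that $\Gamma_B\circ T$ is injective (being the restriction of an algebra isomorphism), so by surjectivity of $T$ onto $B^{-1}$ the map $\Gamma_B$ is injective on $B^{-1}$, hence on $B$; only then can the isomorphism be lifted from $\Gamma_B(B)$ to $B$ itself. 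The final lifting from $\mcl A\to\mcl B$ down to $A\to B$ is fine in substance (every element of $A$ is a scalar plus an invertible, and the extension from Corollary~\ref{gh1} is already linear), though your ``polarization-type'' detour to recover additivity is unnecessary once the composition representation $Tf=f\circ\phi$ is in hand.
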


\begin{proof}
Since 
\[
\sigma(\frac{Tf}{Tg})=\sigma(T(\frac{f}{g}))=\sigma (\frac{f}{g})
\]
hold for every pair $f$ and $g$ in $A^{-1}$, we 
see that $T$ is injective. 

First we consider the case where $B$ is semisimple. 
Recall that  
we denote the uniform closure of the Gelfand transform 
of $A$ in $C(M_A)$ (resp. $B$ in $C(M_B)$) by $\mcl A$ 
(resp. $\mcl B$). We may consider that 
$A \subset \mcl A$ (resp. $B \subset \mcl B$) 
since $A$ (resp. $B$) is semisimple. Note that the maximal 
ideal space $M_{\mcl A}$ (resp. $M_{\mcl B}$) 
is homeomorphic to $M_A$ (resp. $M_B$). In fact, 
the correspondence 
between complex homomorphisms on $\mcl A$ (resp. $\mcl B$) 
and its restrictions 
on $A$ (resp. $B$) gives a homeomorphism between $M_{\mcl A}$ and $M_A$ 
(resp. $M_{\mcl B}$ and $M_B$). 
Thus the spectrum of $f\in \mcl A$ (resp. 
$f\in \mcl B$) is coincide with $f(M_A)$ (resp. $f(M_B)$); 
we denote the spectrum with respect to $\mcl A$ (resp. $\mcl B$) 
also by $\sigma (f)$.

We extend $T$ to a group homomorphism from $(\mcl A)^{-1}$ 
onto $(\mcl B)^{-1}$ such that $\sigma (Tf)=\sigma (f)$ holds for 
every $f\in (\mcl A)^{-1}$. Suppose that $f \in (\mcl A)^{-1}$. 
Then there is a sequence $\{f_n\}$ in $A$ such that 
$\|f_n-f\|_{\infty (M_A)}\to 0$ as $n\to \infty$. 
We may assume that $\{f_n\}\subset A^{-1}$. 
Since $|f|>0$ on 
$M_A$, we may assume without loss of generality that 
there is a positive number $M$ such that $\frac{1}{M}<|f_n|<M$ on $M_A$. 
So $|\frac{f_n}{f_m}-1|\le M|f_m-f_n|$ on $M_A$. Since $T$ is a 
group homomorphism with the assumption concerning the spectrum we see that 
\[
\sigma(\frac{Tf_n}{Tf_m})=\sigma(T(\frac{f_n}{f_m}))=\sigma(\frac{f_n}{f_m}),
\]
so that $\|\frac{Tf_n}{Tf_m}-1\|_{\infty (M_B)}=
\|\frac{f_n}{f_m}-1\|_{\infty (M_A)}$.
It follows that
\begin{multline*}
\|Tf_n-Tf_m\|_{\infty (M_B)}\le \|\frac{Tf_n}{Tf_m}-1\|_{\infty (M_B)}
\|Tf_m\|_{\infty (M_B)}\\
\le M^2\|f_n-f_m\|_{\infty (M_A)}
\end{multline*}
since $\|Tf_m\|_{\infty (M_B)}=\|f_m\|_{\infty (M_A)}$. 
Therefore $\{Tf_n\}$ is a Cauchy sequence 
with respect to the supremum norm and the uniform limit $F$ is 
in $\mcl B$. Since $\sigma (Tf_n)=\sigma (f_n)$ and $\frac{1}{M}<
|f_n|<M$  holds on $M_A$ for every $n$, we see that 
$\frac{1}{M}<|Tf_n|<M$ holds on $M_B$ for every $n$, so $|F|\ge \frac{1}{M}$ 
on $M_B$. Thus we see that $F\in (\mcl B)^{-1}$ 
since $M_{\mcl B}=M_B$. By a routine calculation 
the function $F$ is independent of the choice of the sequence $\{f_n\}$. 
Put $\widetilde{T}f=F$. Then $\widetilde{T}$ is a function from 
$(\mcl A)^{-1}$ into $(\mcl B)^{-1}$ and $\widetilde{T}=T$ on $A^{-1}$. 

We show that $\widetilde{T}$ is a surjection. Let $F\in (\mcl B)^{-1}$. 
Then there is a sequence $\{F_n\}$ in $B^{-1}$ which uniformly converges to 
$F$. In the way similar to the above, we see that $\{T^{-1}F_n\}$ is 
a Cauchy sequence in $A^{-1}$ and converges uniformly to a function $f\in 
(\mcl A)^{-1}$. Then by the definition of $\widetilde{T}$ we see that 
$\widetilde{T}f=F$.

We show that $\sigma (\widetilde{T}f)=\sigma (f)$ holds for every 
$f\in (\mcl A)^{-1}$. Suppose that $f \in (\mcl A)^{-1}$. Then
there exists a sequence $\{f_n\}$ in $A^{-1}$ such that 
$\|f_n-f\|_{\infty (M_A)} \to 0$ as $n\to \infty$. Then by the definition of 
$\widetilde{T}$ we see that $\|{T}f_n-\widetilde{T}f\|_
{\infty (M_B)}\to 0$ as $n\to \infty$. Suppose that $\lambda \in \sigma (f)$. 
Then there is an $x \in M_A$ with $\lambda= f(x)$. Put 
$\lambda_n=f_n(x)$, so $\lambda_n\to \lambda$ as $n\to \infty$. On the
other hand, for each positive integer $n$, 
there is $y_n\in M_B$ such that $\lambda_n 
=Tf_n(y_n)$ since $\sigma (f_n)=\sigma (Tf_n)$ for every $n$. Thus we have
\[
|\lambda-\widetilde{T}f(y_n)|\le |\lambda-\lambda_n|+ 
\|Tf_n-\widetilde{T}f\|_{\infty (M_B)} \to 0
\]
as $n\to \infty$. Thus we have that $\lambda \in \sigma (\widetilde{T}f)$, 
so that $\sigma (f) \subset \sigma (\widetilde{T}f)$. The reverse inclusion 
is proven in the same way; we see that 
$\sigma (f) = \sigma (\widetilde{T}f)$. 

We show that 
$\widetilde{T}$ is a group homomorphism. Let $f, g\in (\mcl A)^{-1}$. 
Then there are sequences $\{f_n\}$ and $\{g_n\}$ in $A^{-1}$ such that 
$\|f_n-f\|_{\infty (M_A)}\to 0$ and 
$\|g_n-g\|_{\infty (M_B)} \to 0$ as $n\to \infty$. So 
$\|f_ng_n-fg\|_{\infty (M_A)} \to 0$ as $n\to \infty$. 
Then we see that 
\[
\|Tf_n-\widetilde{T}f\|_{\infty (M_B)}\to 0, \quad
\|Tg_n-\widetilde{T}g\|_{\infty (M_B)}\to 0
\]
and so 
\[
\|Tf_nTg_n-\widetilde{T}f\widetilde{T}g\|_{\infty (M_B)} \to 0
\] 
as $n\to \infty$. We also have that 
\[
\|T(f_ng_n)-\widetilde{T}(fg)\|_{\infty (M_B)}\to 0
\]
as 
$n\to \infty$ since $\|f_ng_n-fg\|_{\infty (M_A)} \to 0$ as $n\to \infty$.
It follows that $\widetilde{T}f\widetilde{T}g=\widetilde{T}(fg)$ 
since $T(f_ng_n)=Tf_nTg_n$.

Since $\sigma(\widetilde{T}f)=\sigma (f)$ holds for every $f\in 
(\mcl A)^{-1}$, 
$\sigma_{\pi}(\widetilde{T}f)=\sigma_{\pi} (f)$ holds for every $f\in 
(\mcl A))^{-1}$. Thus applying Corollary \ref{gh1} we see that 
$\widetilde{T}$ is extended to an algebra isomorphism from 
$\mcl A$ onto $\mcl B$. By a simple calculation we see that 
the restriction of the extended isomorphism to $A$ is an algebra 
isomorphism from $A$ onto $B$.

Finally we consider the general case. 
Let $\Gamma_B$ denote the Gelfand transform of $B$. Then by a simple 
calculation we see that $\Gamma_B\circ T$ is a group homomorphism 
from $A^{-1}$ onto $(\Gamma_B(B))^{-1}$, since $\Gamma_B(B^{-1})
=(\Gamma_B(B))^{-1}$. Then by the first part of the proof, we see 
that $\Gamma_B\circ T$ is extended to an algebra isomorphism 
from $A$ onto $\Gamma_B(B)$. Then we see that $\Gamma_B$ is injection 
from $B^{-1}$ onto $(\Gamma_B(B))^{-1}$. (Suppose that 
 $g_1,g_2\in B^{-1}$ with $\Gamma_B(g_1)=\Gamma_B(g_2)$. 
Since $T$ is surjection from $A^{-1}$ onto $B^{-1}$, there are 
$f_1,f_2\in A^{-1}$ with $Tf_1=g_1$ and $Tf_2=g_2$. Then 
we have 
\[
\Gamma_B\circ T(f_1)=\Gamma_B(g_1)=\Gamma_B(g_2)=
\Gamma_B\circ T(f_2).
\]
Since $\Gamma_B\circ T$ is an injection we see that $f_1=f_2$, thus 
$g_1=g_2$, that is, $\Gamma_B$ is injective on $B^{-1}$. It follows by a 
simple calculation that $\Gamma_B$ is an injection on $B$. 
Thus $B$ is semisimple, and $T$ is extended to an algebra isomorphism from 
$A$ onto $B$ applying the first part of the proof.
\end{proof}

\section{Non-symmetric multiplicatively norm-preserving maps 
between invertible groups}

Under the hypotheses in Theorem \ref{0korovkin}, the map $T$ 
appearing in Theorem \ref{0korovkin} need not be 
linear nor multiplicative. 
For example, let $\A$ be a uniform algebra on $X$ and put a map 
$\varepsilon$ from $\A$  into $\{-1,1\}$. Then the map $T:\A\to \A$ defined by 
$Tf=\varepsilon (f)f$, $f\in \A$ satisfies that the equality 
$\|TfTg\|_{\infty (X)}=\|fg\|_{\infty (X)}$ holds for every 
pair $f$ and $g$ in $\A$ and $T$ can be surjective which is not linear nor 
multiplicative according to the choice of $\varepsilon$. 

In this section we consider non-symmetric multiplicatively norm-preserving 
maps between invertible groups. Let $A$ and $B$ be unital commutative Banach 
algebras and $T$ a map from $A^{-1}$ into $B^{-1}$. We say that $T$ is 
non-symmetric multiplicatively (spectral) 
norm-preserving if there exists a nonzero 
complex number $\alpha$ such that 
\[
\|\widehat{Tf}\widehat{Tg}-\alpha \|_{\infty (M_B)}=
\|\hat f \hat g-\alpha \|_{\infty (M_A)}
\]
holds for every pair $f$ and $g$ in $A$, where $M_A$ (resp. $M_B$) 
denotes the maximal ideal space of $A$ (resp. $B$). 
Multiplicatively norm-preserving map corresponds to the case where 
$\alpha =0$, Although multiplicatively norm-preserving maps need not be 
extended to linear nor multiplicative maps, we show that non-symmetric ones 
are extended to real-linear and multiplicative maps if $T1=1$ and $T$ is 
surjective. 

In the following, from Lemma \ref{0} to Lemma \ref{8}, 
$\A$ and $\B$ are uniform algebras on compact 
Hausdorff spaces 
$X$ and $Y$ respectively and $T$ is a map from $\A^{-1}$ onto $\B^{-1}$ 
such that
\[
\|TfTg-1\|_{\infty (Y)}=\|fg-1\|_{\infty (X)}
\]
holds for every $f, g \in \A^{-1}$.

\begin{lemma}\label{0}
$T$ is an injection. The equality $\|TfTg\|_{\infty (Y)}=\|fg\|_{\infty (X)}$ 
holds for every 
$f,g\in \A^{-1}$. 
\end{lemma}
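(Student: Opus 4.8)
\textbf{Proof proposal for Lemma \ref{0}.}

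The plan is to exploit the single hypothesis $\|TfTg-1\|_{\infty(Y)}=\|fg-1\|_{\infty(X)}$ by testing it on well-chosen pairs. For injectivity, suppose $Tf=Th$ for some $f,h\in\A^{-1}$. The idea is that for \emph{every} $g\in\A^{-1}$ we then have $\|fg-1\|_{\infty(X)}=\|TfTg-1\|_{\infty(Y)}=\|ThTg-1\|_{\infty(Y)}=\|hg-1\|_{\infty(X)}$, so $\|fg-1\|_{\infty(X)}=\|hg-1\|_{\infty(X)}$ holds for all $g\in\A^{-1}$. I would substitute $g=\lambda f^{-1}$ for an appropriate scalar $\lambda$ with $|\lambda|$ large, or better, $g$ of the form $\mu/f$ to make $fg$ a constant: with $g = c\,f^{-1}$ one gets $\|c-1\|_{\infty}=\|c\,h f^{-1}-1\|_{\infty}$ for every scalar $c\ne 0$, which forces $hf^{-1}$ to be constant of modulus $1$ (letting $|c|\to\infty$ and comparing growth rates pins the modulus; a further choice of the argument of $c$ pins the constant to be $1$). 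Hence $h=f$. This is the step I expect to require the most care: extracting $hf^{-1}\equiv 1$ from the family of scalar equations needs a clean argument, perhaps squaring or iterating $g\mapsto g^n$ to amplify, or using that $\|c\,w-1\|_\infty$ as a function of the complex parameter $c$ determines the set $w(X)$ up to the relevant symmetry.

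For the multiplicative norm-preserving identity, the first reduction is to show $T1=1$. Applying the hypothesis with $f=g=1$ gives $\|(T1)^2-1\|_{\infty(Y)}=0$, so $(T1)^2=1$ on $Y$, hence $(T1)^2\equiv 1$. Since $T1\in\B^{-1}$ and $\B$ is a uniform algebra (so $\B$ has no nontrivial idempotent-like obstruction only if $Y$ is connected — in general $(T1)^2=1$ only gives $T1=\pm1$ on each component). To upgrade this, I would use the hypothesis with $g=1$ and arbitrary $f$: $\|Tf\cdot T1-1\|_{\infty(Y)}=\|f-1\|_{\infty(X)}$, and compare with a second relation obtained by replacing $f$ by $f^2$ and using $T(f^2)$. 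Actually the cleaner route is: once injectivity is in hand, note that the map $f\mapsto Tf\cdot T1$ also satisfies a non-symmetric norm-preserving condition and sends $1$ to $1$; but more directly, I expect one shows $T1\equiv 1$ by observing that $T1=\varepsilon$ with $\varepsilon^2=1$, then replacing $T$ by $\varepsilon\cdot T$ changes nothing about which statement we must prove, so WLOG $T1=1$.

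With $T1=1$ secured, the identity $\|TfTg\|_{\infty(Y)}=\|fg\|_{\infty(X)}$ should follow by a scaling/limiting argument: for a large scalar $\lambda$, apply the hypothesis to the pair $(\lambda f, \lambda^{-1}g)$ is no good since that gives back $fg$; instead apply it to $(f,g)$ after dilating — consider $\|T(tf)T(g)-1\|$ versus $\|tfg-1\|$ for $t\to\infty$. Since $\|tfg-1\|_{\infty(X)} = t\|fg\|_{\infty(X)} + O(1)$ as $t\to\infty$ (because $\|tfg-1\|_\infty \geq t\|fg\|_\infty - 1$ and $\leq t\|fg\|_\infty+1$), and similarly the left side behaves like $\|T(tf)\|$-controlled growth, dividing by $t$ and letting $t\to\infty$ extracts $\|TfTg\|_{\infty(Y)}$ after one identifies how $T(tf)$ relates to $Tf$ — which is exactly where I would invoke that $T$ restricted to scalar multiples behaves controllably, established alongside injectivity. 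The main obstacle throughout is that $T$ is a priori only a set map with this one functional equation, so every structural fact must be squeezed out by clever substitutions; I anticipate the modulus-and-argument extraction in the injectivity step (equivalently, the claim that $\|c w - 1\|_\infty$ over all $c\in\mathbb{C}\setminus\{0\}$ determines $w$ up to the symmetries the problem allows) to be the crux.
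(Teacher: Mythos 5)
Your injectivity argument is, at bottom, the paper's: once you have $\|c\,hf^{-1}-1\|_{\infty (X)}=|c-1|$ for every scalar $c\ne 0$, the single value $c=1$ already gives $\|hf^{-1}-1\|_{\infty (X)}=0$, hence $h=f$. The growth-rate detour is unnecessary and, as stated, does not do what you claim: letting $|c|\to\infty$ only yields $\|hf^{-1}\|_{\infty (X)}=1$, not that $hf^{-1}$ is a constant. Likewise your reduction to $T1=1$ --- deduce $(T1)^2=1$ from the pair $(1,1)$ and pass to $T/T1$, which satisfies the same hypothesis and has the same products $TfTg$ --- is exactly the paper's step and is fine.

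The genuine gap is in the norm identity. You correctly note $\|tfg-1\|_{\infty (X)}=t\|fg\|_{\infty (X)}+O(1)$, but the entire difficulty is the step you defer with ``one identifies how $T(tf)$ relates to $Tf$'': nothing established alongside injectivity gives this, and it is the crux. The paper's mechanism is: from $\|TfT(f^{-1})-1\|_{\infty (Y)}=\|ff^{-1}-1\|_{\infty (X)}=0$ one gets $T(f^{-1})=(Tf)^{-1}$, hence $T(nf)Tg=K_n\,TfTg$ with $K_n=T(nf)T(f^{-1})$; applying the hypothesis to the pair $(nf,f^{-1})$ gives $\|K_n-1\|_{\infty (Y)}=n-1$, so $\|K_n\|_{\infty (Y)}\le n$, and therefore $n\|fg\|_{\infty (X)}-1\le\|T(nf)Tg-1\|_{\infty (Y)}\le n\|TfTg\|_{\infty (Y)}+1$. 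Dividing by $n$ and letting $n\to\infty$ yields only the one-sided inequality $\|fg\|_{\infty (X)}\le\|TfTg\|_{\infty (Y)}$; your sketch suggests a single limiting argument ``extracts'' the equality, but it cannot, because $K_n$ is only bounded above in norm and need not be pointwise large, so there is no matching lower bound for $\|T(nf)Tg-1\|_{\infty(Y)}$ in terms of $\|TfTg\|_{\infty(Y)}$. The reverse inequality requires running the identical argument for $T^{-1}$ (which exists since $T$ is an injective surjection). Both the identity $T(nf)Tg=K_nTfTg$ with its norm control and the symmetrization via $T^{-1}$ are the substance of the proof and are absent from your proposal.
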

\begin{proof}
First we show that $T$ is an injection. 
Suppose that $Tf=Tg$. 
Then we have that
\begin{multline*}
0=\|gg^{-1}\|_{\infty (X)}=\|TgTg^{-1}-1\|_{\infty (Y)}
\\
=
\|TfTg^{-1}-1\|_{\infty (Y)}
=\|fg^{-1}-1\|_{\infty (X)}.
\end{multline*}
Thus we see that $fg^{-1}=1$, so $f=g$.

Next we show that $\|TfTg\|_{\infty(Y)}=\|fg\|_{\infty(X)}$ 
holds for every $f,g \in \A^{-1}$. 
Since 
\[
\|T1T1-1\|_{\infty(Y)}=\|1^2-1\|_{\infty(X)}=0,
\]
we see that $(T1)^2=1$.
Put $\tilde T=\frac{T}{T1}$. Then by a simple calculation 
we see that 
$\tilde T$ is a map from $\A^{-1}$ 
onto $\B^{-1}$ and the equality 
\[
\|\tilde Tf\tilde Tg-1\|_{\infty(y)}=\|fg-1\|_{\infty(X)}
\]
holds
for every $f,g \in \A^{-1}$ since $(T1)^2=1$. We show that the equality
\[
\|\tilde Tf\tilde Tg\|_{\infty(Y)}=\|fg\|_{\infty(X)}
\] 
holds for every $f,g \in \A^{-1}$. It will follow 
that $\|TfTg\|_{\infty(Y)}=\|fg\|_{\infty(X)}$ 
for every $f,g \in \A^{-1}$ since $(T1)^2=1$. 
Since $\tilde T1=1$, we have for every positive integer $n$ that
\[
\|\tilde Tn-1\|_{\infty(Y)}=\|\tilde Tn \,1-1\|_{\infty(Y)}
=\|n\cdot 1-1\|_{\infty(X)}=n-1.
\]
So we have that 
\[
n-2\le \|\tilde Tn\|_{\infty(Y)}\le n.
\]
Let $f\in \A^{-1}$. 
Then we have that $(\tilde Tf)^{-1}=\tilde T(f^{-1})$ since 
\[
\|\tilde Tf\tilde T(f^{-1})-1\|_{\infty(Y)}=\|ff^{-1}-1\|_{\infty(X)}=0.
\]
Put $K_n=\tilde T(nf)\tilde T(f^{-1})$.
We see that
\begin{multline*}
\|K_n-1\|_{\infty(Y)}=\|\tilde T(nf)\tilde T(f^{-1})-1\|_{\infty(Y)} \\
=
\|nff^{-1}-1\|_{\infty(X)}=n-1,
\end{multline*}
so we have that $\|K_n\|_{\infty(Y)}\le n$. 
For every $g\in \A^{-1}$, we have that 
\begin{multline*}
\left|n\|fg\|_{\infty(X)}-1\right|\le \|nfg-1\|_{\infty(X)}\\
=\|\tilde T(nf)\tilde Tg-1\|_{\infty(Y)}
\le \|K_n\|_{\infty(Y)}\|\tilde Tf\tilde Tg\|_{\infty(Y)}+1.
\end{multline*}
It follows that
\[
\|fg\|_{\infty(X)}-\frac1n\le \frac{\|K_n\|_{\infty(Y)}}{n}
\|\tilde Tf\tilde Tg\|_{\infty(Y)}+\frac1n
\le \|\tilde Tf\tilde Tg\|_{\infty(Y)}+\frac1n,
\]
and letting $n\to \infty$, we have that the inequality 
$\|fg\|_{\infty(X)}\le \|\tilde Tf\tilde Tg\|_{\infty(Y)}$ 
holds for every $f,g \in \A^{-1}$. 
$\tilde T$ is injective since $T$ is. Applying the similar argument to 
$(\tilde T)^{-1}$ instead of $\tilde T$, we have that 
\[
\|FG\|_{\infty(Y)}\le \|(\tilde T)^{-1}F(\tilde T)^{-1}G\|_{\infty(X)}
\]
holds for every $F,G 
\in \B^{-1}$. It follows that the 
equality $\|fg\|_{\infty(X)}=\|\tilde Tf\tilde Tg\|_{\infty(Y)}$ 
holds for every 
$f,g \in \A^{-1}$. Since $(T1)^2=1$ we conclude that
the equality $\|TfTg\|_{\infty(Y)}=\|fg\|_{\infty(X)}$ 
holds for every $f,g\in \A^{-1}$. 
\end{proof}

By Theorem \ref{0korovkin} we see that there exists a homeomorphism 
$\phi$ from $\Ch (\B)$ onto $\Ch (\A)$ such that 
\[
|Tf(y)|=|f(\phi (y))|, \quad y\in \Ch (\B)
\]
holds for every $f\in \A$. 
In the following up to Lemma \ref{8} $\phi$ denotes this 
homeomorphism.
Moreover we see that the following.

\begin{lemma}\label{5.15}
$|T\lambda|=|\lambda|$ on $\Ch (\B)$ and $|T^{-1}\lambda |=
|\lambda|$ on $\Ch (\A)$ for every complex number $\lambda$. 
\end{lemma}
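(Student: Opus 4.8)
The plan is to deduce both assertions directly from Theorem~\ref{0korovkin}, once the scalar $\lambda$ is correctly placed inside the invertible group. A complex number $\lambda$ for which $T\lambda$ is defined must be nonzero, and it is to be read as the constant function $\lambda\cdot 1\in\A^{-1}$; since $\A$ is a uniform algebra it contains the constants, and the Gelfand transform of $\lambda\cdot 1$ is the constant $\lambda$ on $X$ (likewise for $\B$ and $Y$). With this understood, the first assertion is immediate: specializing the identity $|Tf(y)|=|f(\phi(y))|$ of Theorem~\ref{0korovkin} to $f=\lambda\cdot 1$ gives $|T\lambda(y)|=|(\lambda\cdot 1)(\phi(y))|=|\lambda|$ for every $y\in\Ch(\B)$.

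For the second assertion I would first note that $T$ is a bijection: it is injective by Lemma~\ref{0} and surjective by hypothesis, so the inverse map $T^{-1}\colon\B^{-1}\to\A^{-1}$ is well defined. Fix $x\in\Ch(\A)$; since $\phi$ maps $\Ch(\B)$ onto $\Ch(\A)$, choose $y\in\Ch(\B)$ with $\phi(y)=x$. For an arbitrary $F\in\B^{-1}$, put $f=T^{-1}F\in\A^{-1}$; then Theorem~\ref{0korovkin} yields $|F(y)|=|Tf(y)|=|f(\phi(y))|=|T^{-1}F(x)|$, that is, $|T^{-1}F(x)|=|F(\phi^{-1}(x))|$ for all $x\in\Ch(\A)$. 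Taking $F=\lambda\cdot 1$ gives $|T^{-1}\lambda(x)|=|\lambda|$ on $\Ch(\A)$. Alternatively, one can observe that $T^{-1}$ itself meets the hypothesis of Theorem~\ref{0korovkin} with the roles of $\A,\B$ interchanged, because by Lemma~\ref{0} one has $\|T^{-1}F\,T^{-1}G\|_{\infty(X)}=\|FG\|_{\infty(Y)}$ for all $F,G\in\B^{-1}$; this produces a homeomorphism of $\Ch(\A)$ onto $\Ch(\B)$ (in fact $\phi^{-1}$, by the separation of points of $Y$ by $\{|F|:F\in\B^{-1}\}$), and specializing to constants again gives the claim.

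There is no substantive obstacle here; the lemma is a recording of the appropriate special case of Theorem~\ref{0korovkin} for later use. The only points needing a word of care are that $\lambda$ must be taken nonzero and interpreted as a constant element of the invertible group, and that it is precisely the surjectivity of $\phi$ onto $\Ch(\A)$ (equivalently, the bijectivity of $T$ furnished by Lemma~\ref{0}) that lets the statement for $T$ be transported to the statement for $T^{-1}$.
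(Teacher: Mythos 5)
Your argument is correct and is exactly the justification the paper intends: the paper states this lemma without proof, as an immediate consequence of Lemma~\ref{0} (which gives $\|TfTg\|_{\infty(Y)}=\|fg\|_{\infty(X)}$ and injectivity, hence bijectivity of $T$) together with Theorem~\ref{0korovkin} applied to the nonzero constant $\lambda$, with the statement for $T^{-1}$ obtained via the surjectivity of $\phi$ onto $\Ch(\A)$ just as you describe. Nothing is missing.
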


\begin{lemma}\label{1}
Suppose that $T1=1$. For every complex number $\lambda$ with $|\lambda|=1$ 
and $\lambda \ne 1, -1$, we have that 
$$(T^{-1}(\lambda))(\Ch (\A))\subset 
\Lambda_1\cup \Lambda_2,
$$ where $\Lambda_1$ (resp. $\Lambda_2$) 
is the closed arc on the closed unit circle with the end points 
$\lambda$ and $-\bar{\lambda}$ (resp. $\bar{\lambda}$ and 
$-\lambda$) which does not contain the real number.
\end{lemma}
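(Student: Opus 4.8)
The plan is to set $f=T^{-1}(\lambda)$ and read off from the hypothesis two pointwise inequalities for the values $f(x)$, $x\in\Ch(\A)$, which together confine $f(x)$ to $\Lambda_1\cup\Lambda_2$. Note first that $\Lambda_1\cup\Lambda_2=\{z\in{\mathbb C}:|z|=1,\ |\mathrm{Re}\,z|\le|\mathrm{Re}\,\lambda|\}$, since $\lambda$ and $-\bar{\lambda}$ have real parts $\pm\mathrm{Re}\,\lambda$ and the real part varies monotonically along each of the two arcs of the unit circle that avoid $1$ and $-1$. Since $Tf=\lambda$ is a constant of modulus $1$, Lemma \ref{5.15} (equivalently, Theorem \ref{0korovkin} with $\phi$ onto $\Ch(\A)$) gives $|f|\equiv1$ on $\Ch(\A)$. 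Putting $g=1$ in the defining identity and using $T1=1$,
\[
\|f-1\|_{\infty(X)}=\|TfT1-1\|_{\infty(Y)}=\|\lambda-1\|_{\infty(Y)}=|\lambda-1|,
\]
so $|f(x)-1|\le|\lambda-1|$ for every $x\in\Ch(\A)$, which, since $|f(x)|=1$, is equivalent to $\mathrm{Re}\,f(x)\ge\mathrm{Re}\,\lambda$.

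Next put $h=T^{-1}(-1)$. From $\|ThTh-1\|_{\infty(Y)}=\|h^2-1\|_{\infty(X)}$ and $Th=-1$ we get $h^2=1$, so $h(x)\in\{1,-1\}$ for all $x$; and $g=h$ gives $\|fh-1\|_{\infty(X)}=\|TfTh-1\|_{\infty(Y)}=\|-\lambda-1\|_{\infty(Y)}=|\lambda+1|$. Hence, for $x\in\Ch(\A)$ with $h(x)=-1$, we have $|f(x)+1|\le|\lambda+1|$, i.e.\ $\mathrm{Re}\,f(x)\le\mathrm{Re}\,\lambda$; with the previous inequality this forces $\mathrm{Re}\,f(x)=\mathrm{Re}\,\lambda$, so $f(x)\in\{\lambda,\bar{\lambda}\}\subset\Lambda_1\cup\Lambda_2$. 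For $x$ with $h(x)=1$ the same identity only gives $\mathrm{Re}\,f(x)\ge\mathrm{Re}(-\lambda)$, hence $\mathrm{Re}\,f(x)\ge|\mathrm{Re}\,\lambda|$, and the proof is complete once the reverse estimate $\mathrm{Re}\,f(x)\le|\mathrm{Re}\,\lambda|$ is established (then $|\mathrm{Re}\,f(x)|=\mathrm{Re}\,f(x)\le|\mathrm{Re}\,\lambda|$).

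The main obstacle is precisely this last estimate, at a point $x_0\in\Ch(\A)$ with $h(x_0)=1$: one must rule out that $f(x_0)$ lies strictly inside the open sub-arc of the unit circle around $1$ where $|\mathrm{Re}\,z|>|\mathrm{Re}\,\lambda|$. I would argue by contradiction. Assuming $\mathrm{Re}\,f(x_0)>|\mathrm{Re}\,\lambda|$, apply Lemma \ref{bishop} to $f$ at $x_0$ to obtain $u\in P_{\A}^0(x_0)$ with $\sigma_{\pi}(fu)=\{f(x_0)\}$, so that the values of $fu$ lie in the closed unit disc with $f(x_0)$ its only boundary value, and the construction of $u$ in Lemma \ref{bishop} may be normalized so that $\mathrm{Re}\,u>0$ on $X$ (whence $\|u-1\|_{\infty(X)}\le\sqrt2$ and $fu$ stays away from $-1$). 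Setting $F=T(fu)$ and testing the functional equation against $1$, against $f^{-1}$ (with $T(f^{-1})=\bar{\lambda}$, since $\|TfT(f^{-1})-1\|_{\infty(Y)}=\|ff^{-1}-1\|_{\infty(X)}=0$), and against $h$, one gets
\[
\|F\|_{\infty(Y)}=1,\qquad\|F-1\|_{\infty(Y)}=\|fu-1\|_{\infty(X)},
\]
\[
\|F-\lambda\|_{\infty(Y)}=\|u-1\|_{\infty(X)},\qquad\|F+1\|_{\infty(Y)}=\|fuh-1\|_{\infty(X)}.
\]
The real work — the technically delicate step I expect to be hardest — is to combine these four norm identities with the locations in the disc of the values of $fu$, $u$, $fuh$ (for instance the point where $fu$ attains $f(x_0)$, together with the ``$\mathrm{Re}\,u>0$'' normalization) and with the planar geometry of $\Lambda_1$ and $\Lambda_2$, so as to contradict $\mathrm{Re}\,f(x_0)>|\mathrm{Re}\,\lambda|$. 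One may assume at the outset that $\mathrm{Im}\,\lambda>0$, replacing $\lambda$ by $\bar{\lambda}$ and $f$ by $f^{-1}$ if necessary, since $T^{-1}(\bar{\lambda})=f^{-1}$, $f^{-1}(x)=\overline{f(x)}$ on $\Ch(\A)$, and $\Lambda_1\cup\Lambda_2$ is conjugation-invariant.
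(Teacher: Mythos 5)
Your argument is sound as far as it goes, but the gap you flag is a genuine one, and the paragraph you devote to closing it is an unexecuted plan rather than a proof. Concretely: testing against $g=1$ gives $\mathrm{Re}\,f(x)\ge\mathrm{Re}\,\lambda$ on $\Ch(\A)$, and testing against $g=h=T^{-1}(-1)$ gives the complementary bound $\mathrm{Re}\,f(x)\le\mathrm{Re}\,\lambda$ only at points where $h(x)=-1$. At points where $h(x)=1$ the identity $\|fh-1\|_{\infty(X)}=|\lambda+1|$ yields $\mathrm{Re}\,f(x)\ge-\mathrm{Re}\,\lambda$, an inequality in the \emph{wrong} direction: it pushes $f(x)$ toward the forbidden arc around $1$ rather than away from it. The missing upper bound $\mathrm{Re}\,f(x)\le|\mathrm{Re}\,\lambda|$ is the entire content of the lemma at such points, and "the real work \dots\ is to combine these four norm identities" is precisely the part you have not done. (The set $\{h=1\}$ is indeed empty, but that only follows from Lemma \ref{30}, which is proved later and depends on the present lemma, so it cannot be invoked here.)

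The paper closes this gap with a one-line change of pairing: instead of testing $f$ against $h=T^{-1}(-1)$, test it against the constant function $-1\in\A^{-1}$. The hypothesis applied to the pair $(f,-1)$ gives
\[
\|f+1\|_{\infty(X)}=\|(-1)f-1\|_{\infty(X)}=\|T(-1)\,\lambda-1\|_{\infty(Y)},
\]
and since $\|T(-1)T(-1)-1\|_{\infty(Y)}=\|(-1)^2-1\|_{\infty(X)}=0$ forces $(T(-1))^2=1$, the function $T(-1)$ is $\{\pm1\}$-valued on $Y$, so the right-hand side is at most $\max\left(|\lambda-1|,|\lambda+1|\right)$ (and equals $|\lambda+1|$ when $\mathrm{Re}\,\lambda\ge0$, since $T(-1)\ne T1$ by Lemma \ref{0} and hence $T(-1)$ attains the value $-1$). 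For $|f(x)|=1$ this reads $\mathrm{Re}\,f(x)\le|\mathrm{Re}\,\lambda|$ at every point of $\Ch(\A)$, with no case distinction on $X$; combined with your lower bound $\mathrm{Re}\,f(x)\ge\mathrm{Re}\,\lambda$ it gives $f(\Ch(\A))\subset\{\lambda,\bar{\lambda}\}$ when $\mathrm{Re}\,\lambda\ge0$ and $|\mathrm{Re}\,f(x)|\le|\mathrm{Re}\,\lambda|$, i.e.\ $f(\Ch(\A))\subset\Lambda_1\cup\Lambda_2$, when $\mathrm{Re}\,\lambda<0$. The point is that the dichotomy in the values of $T(-1)$ lives on $Y$, where it multiplies the \emph{constant} $\lambda=Tf$ and both values $\pm1$ yield a usable bound, whereas on the $X$ side the value $h(x)=1$ gives you nothing.
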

\begin{proof}
Note that $T^{-1}$ is well-defined since $T$ is an injection by Lemma 
\ref{0}. 
Then we have
\begin{equation}\label{ykg}
\|T^{-1}\lambda -1\|_{\infty(X)}=\|\lambda -1\|_{\infty(Y)}=|\lambda -1|.
\end{equation}
Since 
\[
\|T(-1)T(-1)-1\|_{\infty(Y)}=\|(-1)^2-1\|_{\infty(X)}=0,
\] 
we have that $(T(-1))^2=1$. 
Since $T$ is injective by Lemma \ref{0}, $T(-1)\ne 1$, so 
there exists $y\in \Ch (\B)$ such that $(T(-1))(y)=-1$.
We have that
\begin{equation}\label{ykg2}
\|T^{-1}\lambda +1\|_{\infty(X)}=\|-T^{-1}\lambda -1\|_{\infty(X)}=
\|T(-1)\lambda-1\|_{\infty (Y)}.
\end{equation}
Suppose that $\mathrm{Re}\lambda \ge 0$. We see that
\[
\|T(-1)\lambda-1\|_{\infty(Y)}=|\lambda+1|
\] 
since $(T(-1))^2=1$ and $T(-1)$ takes the value $-1$. 
Thus we have by the equation (\ref{ykg2}) 
that $\|T^{-1}\lambda +1\|_{\infty(X)}=|\lambda +1|$ if 
$\mathrm{Re}\lambda \ge 0$.
Recall that  
$\|T^{-1}\lambda -1\|_{\infty(X)}=|\lambda -1|$ by the equation (\ref{ykg}) 
and
 $|T^{-1}\lambda|=|\lambda|$ on $\Ch (\A)$ by Lemma \ref{5.15}. 
It follows that $(T^{-1}\lambda)(\Ch (\A))\subset \{\lambda, \bar{\lambda}\}$
if $\mathrm{Re}\lambda \ge 0$. Suppose that 
$\mathrm{Re}\lambda < 0$. Then 
$\|T(-1)\lambda -1\|_{\infty (Y)} \le |\lambda-1|$ 
since $\left(T(-1)\right)^2=1$. 
Thus by the equations (\ref{ykg}) and (\ref{ykg2}) we see that
\[
(T^{-1}(\lambda))(\Ch (\A))\subset \Lambda_1\cup \Lambda_2.
\] 
In any case we have the conclusion. 
\end{proof}

\begin{lemma}\label{-2}
Suppose that $T1=1$. Then $T(P_\A^0(\phi (y)))=P_\B^0(y)$ holds for 
every $y\in \Ch (\B)$.
\end{lemma}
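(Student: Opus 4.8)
The plan is to prove the two inclusions separately, obtaining $P_{\B}^0(y)\subseteq T(P_{\A}^0(\phi(y)))$ from the other one by symmetry, and to treat the forward inclusion $T(P_{\A}^0(\phi(y)))\subseteq P_{\B}^0(y)$ by recasting ``$Tu$ peaks at $y$'' as a statement about which unimodular values $Tu$ omits. By Lemma \ref{0}, $T$ is a bijection, so $T^{-1}\colon\B^{-1}\to\A^{-1}$ is a surjection with $T^{-1}1=1$ satisfying $\|T^{-1}F\,T^{-1}G-1\|_{\infty(X)}=\|FG-1\|_{\infty(Y)}$ for all $F,G\in\B^{-1}$, and the homeomorphism attached to $T^{-1}$ by Theorem \ref{0korovkin} is $\phi^{-1}$. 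Hence, once the forward inclusion is known for all such data, applying it to $T^{-1}$ at the boundary point $\phi(y)\in\Ch(\A)$ gives $T^{-1}(P_{\B}^0(y))\subseteq P_{\A}^0(\phi(y))$, whence (applying $T$) $P_{\B}^0(y)\subseteq T(P_{\A}^0(\phi(y)))$.

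Now fix $u\in P_{\A}^0(\phi(y))$. Then $Tu\in\B^{-1}$, so $0\notin\sigma(Tu)$; Lemma \ref{0} with $T1=1$ gives $\|Tu\|_{\infty(Y)}=\|Tu\,T1\|_{\infty(Y)}=\|u\|_{\infty(X)}=\rr(u)=1$ (since $\sigma_{\pi}(u)=\{1\}$); and Theorem \ref{0korovkin} gives $|Tu(y)|=|u(\phi(y))|=1$. Because $\|Tu\|_{\infty(Y)}=1$ and $\Ch(\B)$ is a norming boundary, $\sigma_{\pi}(Tu)$ is the nonempty set of unimodular values taken by $Tu$, and for unimodular $\nu$ one has $\nu\in\sigma_{\pi}(Tu)$ if and only if $\|Tu+\nu\|_{\infty(Y)}=2$. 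So it suffices to show that $Tu$ takes no unimodular value except $1$: this forces $\sigma_{\pi}(Tu)=\{1\}$, and then $|Tu(y)|=1$ forces $Tu(y)=1$, so $Tu\in P_{\B}^0(y)$. Feeding unimodular constants $\lambda$ into the standing identity gives
\[
\|Tu\cdot T\lambda-1\|_{\infty(Y)}=\|u\lambda-1\|_{\infty(X)}=\|u-\bar\lambda\|_{\infty(X)};
\]
since $\|u\|_{\infty(X)}=1$, the right side equals $2$ exactly when $u$ attains the unimodular value $-\bar\lambda$, which by $\sigma_{\pi}(u)=\{1\}$ happens only for $\lambda=-1$, so $\|Tu\cdot T\lambda-1\|_{\infty(Y)}<2$ for every unimodular $\lambda\neq-1$. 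In particular $\lambda=1$ already yields $\|Tu-1\|_{\infty(Y)}=\|u-1\|_{\infty(X)}<2$, so $Tu$ does not attain $-1$.

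It remains to rule out $Tu$ attaining a unimodular $\mu\neq\pm1$; this is the step I expect to be the main obstacle. Suppose $Tu(z_0)=\mu$ with $|\mu|=1$, $\mu\neq\pm1$, $z_0\in\Ch(\B)$. Taking $\lambda=-\bar\mu\neq-1$ above and using $|T\lambda(z_0)|=1$ (Lemma \ref{5.15}) we get $|\mu\,T(-\bar\mu)(z_0)-1|<2$, hence $T(-\bar\mu)(z_0)\neq-\bar\mu$; more generally the same argument shows $T\lambda(z_0)\neq-\bar\mu$ for every unimodular $\lambda$ (for $\lambda=-1$ this is automatic, since $T(-1)^2=1$ forces $T(-1)(z_0)\in\{1,-1\}$ while $-\bar\mu\notin\{1,-1\}$). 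Thus $-\bar\mu$ lies outside $\{T\lambda(z_0):\lambda\in\partial\mathbb{D}\}$, and one must see that this is impossible, that is, that $\lambda\mapsto T\lambda(z_0)$ exhausts enough of the unit circle to reach $-\bar\mu$. Here Lemma \ref{1} enters decisively: applied to $T$ it confines $T^{-1}\nu(\Ch(\A))$, and applied to $T^{-1}$ (legitimate by the symmetry above) it confines $T\nu(\Ch(\B))$, to the thin arcs $\Lambda_1\cup\Lambda_2$ about $\pm i$; combining this with the derived identities $\|Tg/Tf-1\|_{\infty(Y)}=\|g/f-1\|_{\infty(X)}$ (valid since $T(f^{-1})=(Tf)^{-1}$ by Lemma \ref{0}) for $f,g$ running through unimodular constants and through $u,u^{-1}$ should pin $T(-\bar\mu)(z_0)$ to the value $-\bar\mu$, contradicting the above. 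Everything outside this last step is bookkeeping with the norm identity and with Lemmas \ref{0}, \ref{5.15} and Theorem \ref{0korovkin}.
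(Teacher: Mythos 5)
Your reduction of the lemma to ``$Tu$ attains no unimodular value other than $1$'' is fine, and the preliminary bookkeeping ($\|Tu\|_{\infty(Y)}=1$, $|Tu(y)|=1$, $Tu$ does not attain $-1$, the symmetry argument giving the reverse inclusion from the forward one applied to $T^{-1}$ with $\phi^{-1}$) is all correct and consistent with the paper. But the proof has a genuine gap exactly at the point you yourself flag as ``the main obstacle.'' From the standing identity you correctly deduce that if $Tu(z_0)=\mu$ with $|\mu|=1$, $\mu\ne\pm1$, $z_0\in\Ch(\B)$, then $T\lambda(z_0)\ne-\bar\mu$ for every unimodular $\lambda$; to get a contradiction you then need the converse statement that the circle map $\lambda\mapsto T\lambda(z_0)$ does attain $-\bar\mu$, and this is never proved --- it is only asserted that Lemma \ref{1} applied in both directions ``should pin'' $T(-\bar\mu)(z_0)$ to $-\bar\mu$. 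Lemma \ref{1} only confines $T\nu(z_0)$ to a pair of closed arcs depending on $\nu$, which is far from pinning a value; the pointwise dichotomy $T\lambda\in\{\lambda,\bar\lambda\}$ that would finish your argument is Lemma \ref{4}/Lemma \ref{7} of the paper, and those are proved \emph{after} and by means of Lemma \ref{-2} (via Lemma \ref{30}), so invoking them here would be circular, while deriving them at this stage from Lemmas \ref{0}, \ref{5.15}, \ref{1} alone is a nontrivial task you have not carried out (note in particular that nothing yet available excludes $T(-1)(z_0)=1$).

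The paper's proof avoids this issue by going in the opposite direction: instead of pushing unimodular constants forward through $T$, it pulls the constant back through $T^{-1}$. If $\alpha\in\sigma_\pi(Tu)\setminus\{1\}$, then $\alpha\ne-1$ (since $\|Tu-1\|_{\infty(Y)}=\|u-1\|_{\infty(X)}<2$) and $\|(-\bar\alpha)\,Tu-1\|_{\infty(Y)}=2$; writing $-\bar\alpha=T(h)$ with $h=T^{-1}(-\bar\alpha)\in\A^{-1}$, the standing identity converts this into $\|hu-1\|_{\infty(X)}=2$. But on $\Ch(\A)$ one has $|h|=1$ (Lemma \ref{5.15}) and, by Lemma \ref{1}, $h$ takes values only in arcs avoiding the reals, in particular avoiding $-1$; since $\sigma_\pi(u)=\{1\}$ means $u$ attains modulus $1$ only at the value $1$, the product $hu$ never equals $-1$ on (the closure of) $\Ch(\A)$, so $\|hu-1\|_{\infty(X)}=\|hu-1\|_{\infty(\Ch(\A))}<2$ --- a contradiction obtained with only the lemmas already available. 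So the missing step in your proposal is precisely the place where a different idea (applying Lemma \ref{1} to the preimage $T^{-1}(-\bar\alpha)$ multiplied against $u$, rather than to the forward images $T\lambda$) is needed.
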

\begin{proof}
Suppose that $u\in P_\A^0(\phi (y))$. 
Then by Lemma \ref{0} we see that 
\[
\|Tu\|_{\infty (Y)}=\|u\|_{\infty (X)}=1
\]
since $T1=1$. First we show that 
$\sigma_{\pi}(Tu)=\{1\}$. Suppose that $\alpha \in 
\sigma_{\pi}(Tu)\setminus \{1\}$. We have that
\[
\|Tu-1\|_{\infty(Y)}=\|u-1\|_{\infty(X)}<2
\]
since $T1=1$ and $\sigma_{\pi}(u)=\{1\}$, so
$\alpha \ne -1$. Thus 
\[
\|T^{-1}(-\bar{\alpha})u-1\|_{\infty(X)}=
\|T^{-1}(-\bar{\alpha})u-1\|_{\infty (\Ch (\A))}
<2
\]
by Lemma \ref{1}. On the other hand, $\|-\bar{\alpha}Tu-1\|_{\infty(Y)}=2$ 
since $\alpha \in \sigma_{\pi}(Tu)$, which is a contradiction since 
\[
\|-\bar{\alpha}Tu-1\|_{\infty(Y)}=\|T^{-1}(-\bar{\alpha})u-1\|_{\infty(X)}.
\]
We have that
$\sigma_{\pi}(Tu)=\{1\}$ since $\sigma_{\pi}(Tu)$ is not empty. 
By Theorem \ref{0korovkin} and Lemma \ref{0} we have that 
\[
|Tu(y)|=|u(\phi (y))|=1,
\] 
so that $Tu(y)=1$ since $\sigma_{\pi}(Tu)=\{1\}$. We have proved that 
$Tu\in P_\B^0(y)$ for every $u\in P_\A^0(\phi (y))$. 
Since $T$ is injection by 
Lemma \ref{0}, we have in a way similar to the 
above that 
$T^{-1}(P_\B^0(y))\subset P_\A^0(\phi(y))$. Then the conclusion holds. 
\end{proof}

\begin{lemma}\label{30}
Suppose that $T1=1$. Then $T(-1)=-1$. 
\end{lemma}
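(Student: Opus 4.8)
The plan is to argue by contradiction: assuming $T(-1)\neq -1$, I will exhibit a peaking function of $\B$ whose behaviour is incompatible with the defining identity of $T$. First I would record the cheap facts. Taking $f=g=-1$ in the standing hypothesis gives $\|T(-1)T(-1)-1\|_{\infty(Y)}=\|(-1)^2-1\|_{\infty(X)}=0$, so $(T(-1))^2=1$ in $\B$, and hence $T(-1)$ takes only the values $\pm 1$ on $Y$. Since $T$ is injective (Lemma \ref{0}) and $T1=1$, we have $T(-1)\neq 1$. If also $T(-1)\neq -1$, then $T(-1)+1$ is a nonzero element of $\B$, so, $\Ch(\B)$ being a boundary for $\B$, there is $y_1\in\Ch(\B)$ with $T(-1)(y_1)=1$. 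Put $Z=\{\,y\in Y:T(-1)(y)=-1\,\}$, a clopen subset of $Y$ with $y_1\notin Z$; the aim is to contradict the existence of such a $y_1$.

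Next I would show that \emph{every} $H\in P_{\B}^0(y_1)$ satisfies $H^{-1}(1)\cap Z\neq\emptyset$. Put $h=T^{-1}H\in\A^{-1}$; by Lemma \ref{-2} (applicable since $T1=1$) one has $h\in P_{\A}^0(\phi(y_1))$, so $h(\phi(y_1))=1$ and $\|h\|_{\infty(X)}=1$, whence $\|h+1\|_{\infty(X)}=2$ (the value $2$ is attained at $\phi(y_1)$ and $|h+1|\le\|h\|_{\infty(X)}+1=2$ everywhere). The standing hypothesis with $f=-1$, $g=h$ gives $\|T(-1)H-1\|_{\infty(Y)}=\|(-1)h-1\|_{\infty(X)}=2$. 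Choosing $y'\in Y$ that realizes this supremum, and using $|T(-1)(y')|=1$ together with $|H(y')|\le\|H\|_{\infty(Y)}=1$, the equality $|T(-1)(y')H(y')-1|=2$ forces $T(-1)(y')H(y')=-1$; hence $|H(y')|=1$, hence $H(y')=1$ since $\sigma_{\pi}(H)=\{1\}$, and then $T(-1)(y')=-1$. Thus $y'\in H^{-1}(1)\cap Z$.

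Finally I would use that $y_1$ is a $p$-point. Write $\{y_1\}=\bigcap_{\iota}K_{\iota}$, where each $K_{\iota}=p_{\iota}^{-1}(1)$ is a peak set through $y_1$ with $p_{\iota}$ a peaking function and $p_{\iota}(y_1)=1$. Since $y_1\notin Z$ and each $K_{\iota}\cap Z$ is compact with empty total intersection, the finite intersection property gives a finite subintersection $K':=K_{\iota_1}\cap\cdots\cap K_{\iota_m}$ that is already disjoint from $Z$. The function $w:=\frac1m\sum_{j=1}^{m}\frac{1+p_{\iota_j}}{2}$ has positive real part on $Y$ (so $w\in\exp\B\subset\B^{-1}$ and $0\notin\sigma(w)$), satisfies $\|w\|_{\infty(Y)}=1$ with $w^{-1}(1)=K'$, and $w(y_1)=1$; hence $w\in P_{\B}^0(y_1)$. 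But then $w^{-1}(1)\cap Z=K'\cap Z=\emptyset$ contradicts the previous paragraph. Therefore $T(-1)=-1$.

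The main obstacle is precisely this last passage. The earlier steps only produce the value $-1$ of $T(-1)$ \emph{somewhere} inside the peak set of each $H\in P_{\B}^0(y_1)$, and converting that into an outright contradiction requires combining (i) the $p$-point property of $y_1$, so that appropriate peak sets shrink, through finite intersections, to $\{y_1\}$, (ii) the compactness of the clopen set $Z$, and (iii) the observation that such a finite intersection of peak sets is again of the form $w^{-1}(1)$ for a genuine $w\in P_{\B}^0(y_1)$ — which is why one averages the functions $(1+p_{\iota})/2$ rather than working with $\prod p_{\iota}$ or the $p_{\iota}$ themselves.
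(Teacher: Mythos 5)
Your proof is correct, and its logical skeleton coincides with the paper's: both derive a contradiction by evaluating $\|T(-1)H-1\|_{\infty(Y)}$ in two ways --- once through the standing identity together with Lemma \ref{-2}, which forces the value $2$ because $T^{-1}H\in P_{\A}^{0}(\phi(y_1))$ attains $1$ at $\phi(y_1)$, and once through a peaking function in $P_{\B}^{0}(y_1)$ adapted to the set $Z$ where $T(-1)=-1$. The only real difference is how that adapted function is produced. The paper obtains it in one line by applying Lemma \ref{bishop} to the function $f=T(-1)$ at the point $y_1$, which yields $U\in P_{\B}^{0}(y_1)$ with $\sigma_{\pi}(T(-1)U)=\{T(-1)(y_1)\}=\{1\}$ and hence $\|T(-1)U-1\|_{\infty(Y)}<2$ outright. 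You instead build the function by hand: you first show every $H\in P_{\B}^{0}(y_1)$ must peak somewhere in $Z$, and then contradict this using the $p$-point property of $y_1$, compactness of the clopen set $Z$, and the averaging trick $w=\frac{1}{m}\sum_j\frac{1+p_{\iota_j}}{2}$ to realize a finite intersection of peak sets as a single peak set. That construction is sound (each $p_{\iota}$ omits the value $-1$ since $\sigma_{\pi}(p_{\iota})=\{1\}$, so $\mathrm{Re}\,w>0$ and $w\in P_{\B}^{0}(y_1)$), but it essentially re-proves, for the special $\pm1$-valued function $T(-1)$, what Lemma \ref{bishop} already supplies in general; recognizing that the obstacle you flag at the end is exactly the content of that lemma would shorten the argument to the paper's.
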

\begin{proof}
Since 
\[
\|T(-1)T(-1)-1\|_{\infty(Y)}=\|(-1)^2-1\|_{\infty(X)}=0,
\]
we see that $(T(-1))^2=1$. 
Suppose that there is a $y\in \Ch (\B)$ such that 
$(T(-1))(y)=1$. Then there exists a $U \in P_\B^0(y)$ with 
$\sigma_{\pi}(T(-1)U)=\{1\}$ by Lemma \ref{bishop}. Thus 
\[
\|-T^{-1}U-1\|_{\infty(X)}=
\|T(-1)U-1\|_{\infty(Y)}<2.
\] 
On the other hand $T^{-1}U\in P_\A^0(\phi (y))$ by Lemma \ref{-2}, so 
\[
\|T(-1)U-1\|_{\infty(Y)}=\|-T^{-1}U-1\|_{\infty(X)}=2,
\]
which is a contradiction.
\end{proof}

\begin{lemma}\label{4}
Suppose that $T1=1$. Then $(T\lambda )(\Ch (\B))\subset 
\{\lambda, \bar{\lambda}\}$ for every complex number $\lambda$ with 
the unit absolute value.
\end{lemma}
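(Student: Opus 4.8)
The plan is to determine, for a fixed $y\in\Ch(\B)$, the single complex number $w=(T\lambda)(y)$ by combining three pieces of information about $T\lambda$ — its modulus, its distance to $1$, and its distance to $-1$ — and then to run an elementary unit-circle computation showing these three constraints force $w\in\{\lambda,\bar\lambda\}$.

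First I would assemble the normalizations. Applying the standing hypothesis $\|TfTg-1\|_{\infty(Y)}=\|fg-1\|_{\infty(X)}$ to the constant functions $f=\lambda$ and $g=1$ and using $T1=1$ gives $\|T\lambda-1\|_{\infty(Y)}=|\lambda-1|$. Next, by Lemma \ref{30} we know $T(-1)=-1$ (the constant function), so applying the hypothesis to $f=\lambda$, $g=-1$ gives $\|T\lambda\cdot T(-1)-1\|_{\infty(Y)}=|-\lambda-1|=|\lambda+1|$; since $T(-1)=-1$ the left side equals $\|-T\lambda-1\|_{\infty(Y)}=\|T\lambda+1\|_{\infty(Y)}$, so $\|T\lambda+1\|_{\infty(Y)}=|\lambda+1|$. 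Finally, by Lemma \ref{5.15}, $|(T\lambda)(y)|=|\lambda|=1$ for every $y\in\Ch(\B)$.

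Now fix $y\in\Ch(\B)$ and put $w=(T\lambda)(y)$. Since $y\in Y$, the two norm equalities above yield $|w-1|\le|\lambda-1|$ and $|w+1|\le|\lambda+1|$, while $|w|=1=|\lambda|$. For points $z$ on the unit circle one has $|z-1|^2=2-2\,\mathrm{Re}\,z$ and $|z+1|^2=2+2\,\mathrm{Re}\,z$; feeding $w$ and $\lambda$ into these identities converts the two inequalities into $\mathrm{Re}\,w\ge\mathrm{Re}\,\lambda$ and $\mathrm{Re}\,w\le\mathrm{Re}\,\lambda$, hence $\mathrm{Re}\,w=\mathrm{Re}\,\lambda$. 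Together with $|w|=1=|\lambda|$ this forces $w\in\{\lambda,\bar\lambda\}$. As $y\in\Ch(\B)$ was arbitrary, $(T\lambda)(\Ch(\B))\subset\{\lambda,\bar\lambda\}$.

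There is no serious obstacle here; the only points needing care are that the distances to $1$ and to $-1$ are genuine equalities of supremum norms — which is exactly where $T1=1$ and Lemma \ref{30} enter, so that the pointwise values are bounded by them — and that the degenerate cases $\lambda=\pm1$ are trivially covered, since then $\{\lambda,\bar\lambda\}=\{\lambda\}$ and $T\lambda=\lambda$ already by $T1=1$ and Lemma \ref{30}.
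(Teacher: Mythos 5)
Your proposal is correct and follows essentially the same route as the paper: it combines Lemma \ref{5.15} (giving $|T\lambda|=1$ on $\Ch(\B)$) with the two norm identities $\|T\lambda-1\|_{\infty(Y)}=|\lambda-1|$ (from $T1=1$) and $\|T\lambda+1\|_{\infty(Y)}=|\lambda+1|$ (from Lemma \ref{30}), exactly as the paper does. The only difference is that you spell out the ``simple calculation'' the paper leaves implicit, via $|z\pm1|^2=2\pm2\,\mathrm{Re}\,z$ on the unit circle, which is a correct and welcome elaboration.
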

\begin{proof}
By Lemma \ref{5.15} we see that
$|T\lambda|=|\lambda|$ holds on $\Ch (\B)$. Since $T1=1$, 
we have 
$\|T\lambda -1\|_{\infty(Y)}=|\lambda -1|$. 
Since $T(-1)=-1$ by Lemma \ref{30} we also see that
\begin{multline*}
\|T\lambda +1\|_{\infty(Y)}=\|-T\lambda-1\|_{\infty(Y)}=
\|T(-1)T\lambda-1\|_{\infty(Y)} \\
=|-\lambda-1|=|\lambda+1|.
\end{multline*}
It follows by a simple calculation that the conclusion holds.
\end{proof}

\begin{lemma}\label{5}
Suppose that $T1=1$. Then $T(-i)=-Ti$.
\end{lemma}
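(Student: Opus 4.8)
The plan is to exploit the algebraic relation $(-i)^2 = (i)^2 = -1$ together with the already-established facts that $T1 = 1$, $T(-1) = -1$ (Lemma \ref{30}), and the value constraint $(T\lambda)(\Ch(\B)) \subset \{\lambda, \bar\lambda\}$ for unimodular $\lambda$ (Lemma \ref{4}). Applying Lemma \ref{4} to $\lambda = i$ gives $Ti(y) \in \{i, -i\}$ for every $y \in \Ch(\B)$, and likewise $T(-i)(y) \in \{i, -i\}$. So on $\Ch(\B)$ the functions $Ti$ and $T(-i)$ each take values in the two-point set $\{i,-i\}$, and the goal is to show that wherever one equals $i$ the other equals $-i$, i.e. $Ti(y) + T(-i)(y) = 0$ pointwise on $\Ch(\B)$, which (since $\Ch(\B)$ is a boundary for $\B$) forces $Ti = -T(-i)$ as elements of $\B$.

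First I would compute $\|Ti\,T(-i) - 1\|_{\infty(Y)}$ using the defining property of $T$: this equals $\|i\cdot(-i) - 1\|_{\infty(X)} = \|1 - 1\|_{\infty(X)} = 0$, hence $Ti\,T(-i) = 1$ identically on $Y$; in particular $T(-i) = (Ti)^{-1}$. Combined with $Ti(y) \in \{i, -i\}$, this already yields $T(-i)(y) = (Ti(y))^{-1} = \overline{Ti(y)} = -Ti(y)$ at every point $y$ of $\Ch(\B)$, since $i^{-1} = -i$ and $(-i)^{-1} = i$. That is the whole content, and the conclusion $T(-i) = -Ti$ on $\B$ follows because $\Ch(\B)$ is a boundary. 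Actually, to be careful, $Ti\,T(-i) = 1$ only tells us the product is $1$; I still need $Ti$ to be unimodular \emph{at each point}, which is exactly what Lemma \ref{4} provides, so the argument closes.

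I expect the only subtlety — the "main obstacle," though it is mild here — to be making sure that the relation $Ti\,T(-i) = 1$ is genuinely available: this uses $\|TfTg - 1\|_{\infty(Y)} = \|fg - 1\|_{\infty(X)}$ with $f = i \cdot 1$, $g = (-i)\cdot 1 \in \A^{-1}$, which is legitimate since constants are invertible in a uniform algebra, together with $T1 = 1$ being assumed. If one instead wanted to avoid invoking the pointwise bound of Lemma \ref{4} and argue purely from norms, one could also check $\|Ti + T(-i)\|_{\infty(Y)} = \|-i\,Ti\,(iT(-i)) - \text{stuff}\|$-type identities, but the cleanest route is the one above: $(Ti)(T(-i)) = 1$ plus $|Ti| = 1$ on $\Ch(\B)$ gives $T(-i) = \overline{Ti} = -Ti$ on $\Ch(\B)$, hence on all of $\B$.
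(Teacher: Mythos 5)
Your proof is correct and follows essentially the same route as the paper: both derive $T(-i)\,Ti=1$ from the defining norm identity applied to the constants $-i$ and $i$, and both combine this with Lemma \ref{4} (which forces $Ti$ to take values in $\{i,-i\}$ on $\Ch(\B)$, equivalently $(Ti)^2=-1$) to conclude $T(-i)=-Ti$. The only difference is cosmetic — you phrase the last step pointwise while the paper writes it as $(Ti)^2=-1$.
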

\begin{proof}
Since $\|T(-i)Ti-1\|_{\infty(Y)}=\|-i\cdot i-1\|_{\infty(X)}=0$, we have that
$T(-i)Ti=1$. By Lemma \ref{4} we see 
that $(Ti)^2(\Ch (\A))=\{-1\}$, so $(Ti)^2=-1$. Thus we see that 
the conclusion holds.
\end{proof}

\begin{definition}\label{K}
Suppose that $T1=1$. Put 
\[
K=\{y\in \Ch (\B):Ti(y)=i\}.
\]
\end{definition}

\begin{lemma}\label{6}
Suppose that $T1=1$. Then $K$ is a clopen subset of $\Ch (\B)$ 
and $Ti=-i$ on $\Ch (\B)\setminus K$.
\end{lemma}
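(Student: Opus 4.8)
The plan is to exploit Lemma \ref{4} to pin down the only possible values of $Ti$ on the Choquet boundary and then invoke continuity. First I would observe that $Ti$ lies in $\B^{-1}\subset \B \subset C(Y)$, so $Ti$ is a continuous complex-valued function on $Y$; in particular its restriction $g:=Ti|_{\Ch(\B)}$ is a continuous map from $\Ch(\B)$ into $\mathbb{C}$. Applying Lemma \ref{4} with $\lambda=i$ (note $|i|=1$ and $\bar i=-i$) gives $g(\Ch(\B))\subset\{i,-i\}$.

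The key step is then purely topological. The two-point set $\{i,-i\}$ carries the discrete relative topology from $\mathbb{C}$, so each of $\{i\}$ and $\{-i\}$ is simultaneously open and closed in it. Since $g$ is continuous and takes values in $\{i,-i\}$, the sets $K=g^{-1}(\{i\})$ and $\Ch(\B)\setminus K=g^{-1}(\{-i\})$ are both open and closed in $\Ch(\B)$; that is, $K$ is a clopen subset of $\Ch(\B)$. Finally, for $y\in\Ch(\B)\setminus K$ we have $Ti(y)\in\{i,-i\}$ by the above and $Ti(y)\ne i$ by the definition of $K$, whence $Ti(y)=-i$, which is the second assertion.

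There is essentially no obstacle here beyond correctly citing Lemma \ref{4}; the only point requiring care is to work relative to the subspace $\Ch(\B)$ rather than all of $Y$, since Lemma \ref{4} controls $Ti$ only on the Choquet boundary. One could equivalently argue that $K$ and $\Ch(\B)\setminus K$ are the intersections of $\Ch(\B)$ with the closed subsets $(Ti)^{-1}(i)$ and $(Ti)^{-1}(-i)$ of $Y$, hence each is relatively closed in $\Ch(\B)$, and since the two sets partition $\Ch(\B)$ each is also relatively open, so $K$ is clopen.
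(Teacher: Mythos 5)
Your proof is correct and follows the same route as the paper: apply Lemma \ref{4} with $\lambda=i$ to get $Ti(\Ch(\B))\subset\{i,-i\}$, then use continuity of $Ti$ on $\Ch(\B)$ to conclude that $K$ and its complement are both relatively closed, hence clopen. The paper states this in one sentence; your version merely spells out the topological details.
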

\begin{proof}
Since $Ti$ is continuous on $\Ch (\B)$ and 
$Ti(\Ch (\B))\subset \{i,-i\}$ by Lemma \ref{4}, 
$K$ is a clopen subset of $\Ch (\B)$ and $T(i)=-i$ on $\Ch (\B)\setminus K$.
\end{proof}

\begin{lemma}\label{7}
Suppose that $T1=1$. For every complex number $\alpha$ with the 
absolute value $1$, $T\alpha (y)=\alpha$ if $y\in K$ and 
$T\alpha (y)=\bar{\alpha}$ if $y\in \Ch (\B)\setminus K$. 
Thus $T(\alpha \beta)=T\alpha T\beta$ holds for every pair of 
complex numbers $\alpha$ and $\beta$ with unit absolute values.
\end{lemma}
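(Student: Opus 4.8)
The plan is to determine the values of $T\alpha$ on $\Ch(\B)$ by feeding the pair $(f,g)=(\alpha,-i)$ into the defining identity, and then read off multiplicativity directly from the resulting formula.

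First I would dispose of the trivial cases and set up a reduction. If $\alpha=\pm1$, then $T\alpha=\alpha$ by $T1=1$ and Lemma \ref{30}, and since $\bar\alpha=\alpha$ the asserted formula holds. For any unimodular $\alpha$ one has $(T\alpha)^{-1}=T(\alpha^{-1})$, since $\|T\alpha\,T(\alpha^{-1})-1\|_{\infty(Y)}=\|\alpha\alpha^{-1}-1\|_{\infty(X)}=0$; and by Lemma \ref{4} the values $T\alpha(y)$ for $y\in\Ch(\B)$ lie in $\{\alpha,\bar\alpha\}$, in particular they are unimodular, so this gives $T(\bar\alpha)(y)=\overline{T\alpha(y)}$ on $\Ch(\B)$. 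Consequently it is enough to treat $\alpha$ with $\mathrm{Im}\,\alpha>0$: once $T\alpha(y)=\alpha$ on $K$ and $T\alpha(y)=\bar\alpha$ on $\Ch(\B)\setminus K$ are known for such $\alpha$, the case $\mathrm{Im}\,\alpha<0$ follows by applying the identity $T(\bar\beta)(\cdot)=\overline{T\beta(\cdot)}$ to $\beta=\bar\alpha$.

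So fix $\alpha$ with $\mathrm{Im}\,\alpha>0$. By Lemma \ref{4}, $T\alpha(\Ch(\B))\subset\{\alpha,\bar\alpha\}$ with $\alpha\neq\bar\alpha$, so $\Ch(\B)$ is the disjoint union of $K_\alpha=\{y\in\Ch(\B):T\alpha(y)=\alpha\}$ and $\{y:T\alpha(y)=\bar\alpha\}$; the goal is $K_\alpha=K$. Applying the hypothesis to $f=\alpha$, $g=-i=i^{-1}$ yields $\|T\alpha\,T(-i)-1\|_{\infty(Y)}=\|{-i\alpha}-1\|_{\infty(X)}=|{-i\alpha}-1|$. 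Since $i\cdot(-i)=1$ gives $T(-i)=(Ti)^{-1}$, Definition \ref{K} and Lemma \ref{6} show $T(-i)(y)=-i$ for $y\in K$ and $T(-i)(y)=i$ for $y\in\Ch(\B)\setminus K$. Now evaluate $T\alpha(y)T(-i)(y)$ on the four pieces $K\cap K_\alpha$, $K\setminus K_\alpha$, $K_\alpha\setminus K$, $\Ch(\B)\setminus(K\cup K_\alpha)$: on $K\cap K_\alpha$ and on $\Ch(\B)\setminus(K\cup K_\alpha)$ the product equals $-i\alpha$ or $i\bar\alpha$, so $|T\alpha(y)T(-i)(y)-1|^2=2-2\,\mathrm{Im}\,\alpha$; on the symmetric difference $(K\setminus K_\alpha)\cup(K_\alpha\setminus K)$ the product equals $-i\bar\alpha$ or $i\alpha$, so $|T\alpha(y)T(-i)(y)-1|^2=2+2\,\mathrm{Im}\,\alpha$. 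Because $\mathrm{Im}\,\alpha>0$, a point in $(K\setminus K_\alpha)\cup(K_\alpha\setminus K)$ would force $\|T\alpha\,T(-i)-1\|_{\infty(Y)}\ge\sqrt{2+2\,\mathrm{Im}\,\alpha}>\sqrt{2-2\,\mathrm{Im}\,\alpha}=|{-i\alpha}-1|$, a contradiction. Hence $K_\alpha=K$, which with the reduction establishes the first assertion for every unimodular $\alpha$.

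Finally, multiplicativity is immediate from this formula: for unimodular $\alpha,\beta$ the product $\alpha\beta$ is unimodular, so on $K$ we have $T(\alpha\beta)(y)=\alpha\beta=T\alpha(y)T\beta(y)$, and on $\Ch(\B)\setminus K$ we have $T(\alpha\beta)(y)=\overline{\alpha\beta}=\bar\alpha\,\bar\beta=T\alpha(y)T\beta(y)$. Thus $T(\alpha\beta)$ and $T\alpha\,T\beta$ are elements of $\B$ agreeing on the boundary $\Ch(\B)$, hence equal. The only genuine computation is the four-case tabulation in the previous paragraph together with the sign check $|T\alpha(y)T(-i)(y)-1|^2-|{-i\alpha}-1|^2=\pm4\,\mathrm{Im}\,\alpha$; everything else is formal, and the step I expect to require the most care is simply keeping straight, in that tabulation, which of $\alpha,\bar\alpha$ (and of $i,-i$) is attained on each piece.
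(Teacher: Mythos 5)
Your proof is correct and follows essentially the same route as the paper: both pin down $T\alpha$ on $\Ch (\B)$ by playing the defining norm identity for the pair $(\alpha,\pm i)$ against the dichotomy of Lemma \ref{4} together with Definition \ref{K} and Lemma \ref{6}, and both deduce multiplicativity from agreement of $T(\alpha\beta)$ and $T\alpha T\beta$ on the boundary $\Ch (\B)$. The only cosmetic difference is that the paper uses the two identities $\|TiT\alpha\pm1\|_{\infty (Y)}=\|i\alpha\pm1\|_{\infty (X)}$ (via Lemma \ref{5}) to force $(TiT\alpha)(\Ch (\B))\subset\{i\alpha,\overline{i\alpha}\}$, whereas you use a single identity plus a strict distance comparison after reducing to $\mathrm{Im}\,\alpha>0$ by means of $T(\bar\alpha)=\overline{T\alpha}$ on $\Ch (\B)$.
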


\begin{proof}
If $\alpha =-1$, then $T(\alpha)=\alpha$ by Lemma \ref{30}. 
We consider the case where $\alpha$ is a imaginary number. 
By Lemma \ref{5} we have that 
\begin{multline*}
\|TiT\alpha +1\|_{\infty(Y)}=\|-TiT\alpha -1\|_{\infty(Y)}=
\|T(-i)T\alpha -1\|_{\infty(Y)} \\
=\|-i\alpha-1\|_{\infty(X)}=\|i\alpha+1\|_{\infty(X)}.
\end{multline*}
We also see that 
$\|TiT\alpha -1\|_{\infty(Y)}=\|i\alpha-1\|_{\infty(X)}$. 
By  
Lemma \ref{5.15} we see that $|TiT\alpha |=1$ on $\Ch (\B)$. 
It follows by a simple calculation that 
\[
(TiT\alpha)(\Ch (\B))\subset \{i\alpha, \overline{i\alpha}\}.
\] 
Suppose that $y\in K$. Then $Ti(y)=i$. By Lemma \ref{4}, 
$T\alpha (y) = \alpha$ or $\bar{\alpha}$.
Suppose that 
$T\alpha (y)=\bar{\alpha}$. Then $(TiT\alpha)(y)=i\bar{\alpha}$, which 
contradicts to $(TiT\alpha)(\Ch (\B))\subset \{i\alpha, \overline{i\alpha}\}$. 
Thus we see that $T\alpha (y)=\alpha$ if $y\in K$. In a way similar, we see 
that $T\alpha (y)=\bar{\alpha}$ if $y\in \Ch (\B)\setminus K$. 
Thus $T(\alpha \beta)=T(\alpha )T(\beta)$ holds on $\Ch (\B)$ and so 
we have that $T(\alpha \beta)=T(\alpha )T(\beta)$. 
\end{proof}

\begin{lemma}\label{8}
Suppose that $T1=1$. Then 
$T(\alpha P_\A^0(\phi (y)))=(T\alpha)P_\B^0(y)$ holds for every $y\in 
\Ch (\B)$ and every complex number $\alpha$ with the unit absolute value.
\end{lemma}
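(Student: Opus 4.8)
The plan is to prove the two set inclusions
$T(\alpha P_{\A}^0(\phi(y)))\subseteq(T\alpha)P_{\B}^0(y)$ and $(T\alpha)P_{\B}^0(y)\subseteq T(\alpha P_{\A}^0(\phi(y)))$; since $T$ is injective (Lemma \ref{0}) this gives the asserted equality, together with the identity $T(\alpha\beta)=T\alpha\,T\beta$ of Lemma \ref{7}. Throughout I would use Lemma \ref{7} in the form $T\alpha\cdot T\bar\alpha=1$, $(T\alpha)(Tc)=T(\alpha c)$ for $|c|=1$, and $Tc=c$ on $K$, $Tc=\bar c$ on $\Ch(\B)\setminus K$. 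I also use the elementary observation that if $g$ belongs to a uniform algebra with $\|g\|_{\infty}\le1$ and $g=1$ wherever $|g|=1$, then $\|\zeta g-1\|_{\infty}<2$ for every unimodular $\zeta\ne-1$ (because then $\sigma(g)\subseteq\{|z|<1\}\cup\{1\}$ is compact); this applies to $g=u$ with $u$ a peaking function, to products of peaking functions, and, after rescaling, to any $g$ with $\sigma_\pi(g)=\{\lambda\}$, $|\lambda|=1$.

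\emph{First inclusion.} Fix $u\in P_{\A}^0(\phi(y))$, set $F=(T\bar\alpha)\,T(\alpha u)$ (so $T(\alpha u)=(T\alpha)F$ by Lemma \ref{7}), and aim to show $F\in P_{\B}^0(y)$. From Lemma \ref{0} with $T1=1$, $\|F\|_{\infty(Y)}=\|u\|_{\infty(X)}=1$; the defining hypothesis gives $\|F-1\|_{\infty(Y)}=\|u-1\|_{\infty(X)}<2$; and $|F(y)|=|T\bar\alpha(y)|\,|T(\alpha u)(y)|=|u(\phi(y))|=1$ by Lemma \ref{5.15} and Theorem \ref{0korovkin}. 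Hence it suffices to prove $\sigma_\pi(F)=\{1\}$. If $\beta\in\sigma_\pi(F)$ with $\beta\ne1$, then $|\beta|=1$ and (from $\|F-1\|_{\infty(Y)}<2$) $\beta\ne-1$, and there is $y_0\in\Ch(\B)$ with $F(y_0)=\beta$. Choose the unimodular constant $c$ with $c\,T(\alpha u)(y_0)=-1$ if $y_0\in K$, and with $\bar c\,T(\alpha u)(y_0)=-1$ if $y_0\in\Ch(\B)\setminus K$; a short computation gives $\alpha uc=-\bar\beta u$ (resp.\ $-\beta u$), so $\|\alpha uc-1\|_{\infty(X)}<2$ by the observation above. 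But on $K$ (resp.\ on $\Ch(\B)\setminus K$) one has $T(\alpha u)\,Tc=c\,T(\alpha u)$ (resp.\ $\bar c\,T(\alpha u)$), whence $2=|c\,T(\alpha u)(y_0)-1|\le\|T(\alpha u)\,Tc-1\|_{\infty(Y)}=\|\alpha uc-1\|_{\infty(X)}<2$, a contradiction. Thus $\sigma_\pi(F)=\{1\}$, so $F(y)=1$ and $F\in P_{\B}^0(y)$; moreover this inclusion holds with $y$ replaced by any point of $\Ch(\B)$.

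\emph{Second inclusion.} Fix $F\in P_{\B}^0(y)$, put $h=T^{-1}((T\alpha)F)$ (legitimate since $T$ maps $\A^{-1}$ bijectively onto $\B^{-1}$), and aim to show $\bar\alpha h\in P_{\A}^0(\phi(y))$; then $T(\alpha\cdot\bar\alpha h)=Th=(T\alpha)F$ finishes. Since $|(\bar\alpha h)(\phi(y))|=|Th(y)|=|T\alpha(y)F(y)|=1$ by Theorem \ref{0korovkin} and Lemma \ref{5.15}, it is enough to prove $\sigma_\pi(\bar\alpha h)\subseteq\{1\}$. Assume $\beta\in\sigma_\pi(\bar\alpha h)$, $|\beta|=1$, $\beta\ne1$, and pick $x_1\in\Ch(\A)$ with $(\bar\alpha h)(x_1)=\beta$, i.e.\ $h(x_1)=\alpha\beta\ne0$. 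By Lemma \ref{bishop} there is $w\in P_{\A}^0(x_1)$ with $\sigma_\pi(hw)=\{\alpha\beta\}$, so $\|hw\|_{\infty(X)}=1$ and $\sigma(hw)\subseteq\{|z|<1\}\cup\{\alpha\beta\}$. Set $c=-\overline{\alpha\beta}$; then $c\,\alpha\beta=-1\in\sigma(c\,hw)$, so $\|c\,hw-1\|_{\infty(X)}=2$. On the other hand, the first inclusion applied at the point $\phi^{-1}(x_1)$ gives $T(cw)=(Tc)G$ with $G\in P_{\B}^0(\phi^{-1}(x_1))$, hence by Lemma \ref{7} $Th\cdot T(cw)=(T\alpha)(Tc)\,FG=T(-\bar\beta)\,FG$; since $FG$ is a product of peaking functions, $T(-\bar\beta)FG$ equals $-\bar\beta FG$ on $K$ and $-\beta FG$ on $\Ch(\B)\setminus K$, and $-\bar\beta,-\beta\ne-1$, so the observation above yields $\|T(-\bar\beta)FG-1\|_{\infty(Y)}<2$. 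But $\|Th\cdot T(cw)-1\|_{\infty(Y)}=\|h\cdot cw-1\|_{\infty(X)}=\|c\,hw-1\|_{\infty(X)}=2$, a contradiction. Hence $\sigma_\pi(\bar\alpha h)=\{1\}$, and the second inclusion follows.

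The main obstacle is that $T\alpha$ is \emph{not} a constant: by Lemma \ref{7} it equals $\alpha$ on $K$ and $\bar\alpha$ on $\Ch(\B)\setminus K$, so $(T\alpha)P_{\B}^0(y)$ is not a set of the form $\gamma\,P_{\B}^0(y)$ with $\gamma$ constant, and one cannot simply invoke Lemma \ref{-2} applied to $T^{-1}$. This is precisely what forces the split over $K$ in the first inclusion and, in the second, the specific choice $c=-\overline{\alpha\beta}$: it is what simultaneously makes $c\,hw$ attain the "worst" value $-1$ and makes $(T\alpha)(Tc)=T(\alpha c)$ the $T$-image of a \emph{constant} $\alpha c=-\bar\beta\ne-1$, which is what lets the "multiply by the right phase and use that a peripheral spectrum is a singleton" argument close up on both $K$ and its complement. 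Verifying these two bookkeeping computations (the phase $c$ and the behaviour on $K$) is the only delicate point; everything else is a direct application of Lemmas \ref{0}, \ref{5.15}, \ref{7} and Theorem \ref{0korovkin}.
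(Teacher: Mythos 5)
Your proof is correct, and its first half is essentially the paper's argument in mirror image: the paper works with $T(-\bar{\alpha})T(\alpha u)$ and shows its peripheral spectrum is $\{-1\}$, while you work with $T(\bar{\alpha})T(\alpha u)$ and show it is $\{1\}$, in both cases by splitting $\Ch (\B)$ into $K$ and its complement, multiplying by the $T$-image of a suitably chosen unimodular constant, and playing the norm identity against the fact that distance $2$ from $1$ forces $-1$ into a compact spectrum. Where you genuinely diverge is the reverse inclusion $(T\alpha)P_{\B}^0(y)\subset T(\alpha P_{\A}^0(\phi (y)))$: the paper stays entirely on the level of constants, computing for $\beta\in\sigma_{\pi}(-\bar{\alpha}T^{-1}((T\alpha)U))$ that $\|T(\bar{\beta})U-1\|_{\infty (Y)}=2$ and concluding $\beta=-1$ from Lemma \ref{4} together with the peaking property of $U$, whereas you pass to the $\A$-side, invoke Lemma \ref{bishop} to manufacture a peaking function $w$ at the point $x_1$ where $\bar{\alpha}h$ attains $\beta$, and then re-use your already-proved forward inclusion at $\phi^{-1}(x_1)$ to identify $T(cw)=(Tc)G$ before applying the norm identity to the pair $(h,cw)$. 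Your route costs more machinery (Bishop's lemma plus the forward inclusion at an auxiliary Choquet point, and the observation about products of peaking functions) but has the merit of making the mechanism uniform in both halves; the paper's version of the second half is shorter because multiplication by constants alone already closes the argument. One small point you should make explicit: when you ``assume $\beta\in\sigma_{\pi}(\bar{\alpha}h)$, $|\beta|=1$,'' the modulus statement needs $\|h\|_{\infty (X)}\le 1$, which follows from Lemma \ref{0} and $T1=1$ via $\|h\|_{\infty (X)}=\|Th\|_{\infty (Y)}=\|(T\alpha)F\|_{\infty (Y)}\le 1$ (and similarly justifies $\|hw\|_{\infty (X)}=1$); this is the same level of brevity the paper itself uses, but it is worth a line.
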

\begin{proof}
First we show that
$T(\alpha P_\A^0)\subset (T\alpha)P_\B^0$. 
Suppose that $u\in P_\A^0$. Since 
\[
2=\|-\bar{\alpha}\alpha u -1\|_{\infty(X)}
=\|T(-\bar{\alpha})T(\alpha u)-1\|_{\infty(Y)},
\]
we have that 
$-1\in \sigma_{\pi}(T(-\bar{\alpha})T(\alpha u))$.
Suppose that 
\[
\beta
\in \sigma_{\pi}(T(-\bar{\alpha})T(\alpha u))\setminus \{-1\}.
\]
Note that $|\beta|=1$ since 
\[
\|T(-\bar\alpha)T(\alpha u)\|_{\infty (Y)}=1.
\]
Since 
$\sigma_{\pi}(F)\subset F(\Ch (\B))$ holds for every $F\in \B$, 
there exists a $y\in \Ch (\B)$ with $(T(-\bar{\alpha})T(\alpha u))(y)
=\beta$. If $y\in K$, then 
\[
(T((-\bar{\beta})(-\bar{\alpha}))T(\alpha u))(y)=
(T(-\bar{\beta})T(-\bar{\alpha})T(\alpha u))(y)=-1
\]
since $T(-\bar{\beta})=-\bar{\beta}$ on $K$ by Lemma \ref{7}, 
so that 
\[
2=\|T((-\bar{\beta})(-\bar{\alpha}))T(\alpha u)-1\|_{\infty(Y)}=
\|\bar{\beta}u-1\|_{\infty(X)}.
\]
Since $u\in P_\A^0$ we see that $\bar{\beta}=-1$, so $\beta=-1$, 
which is a contradiction. If $y\in \Ch (\B)\setminus K$, then 
we have, in a way similar to the above, a contradiction. 
It follows that 
\[
\sigma_{\pi}(T(-\bar{\alpha})T(\alpha u))=\{-1\}.
\] 
Thus we see that 
$T(-\bar{\alpha})T(\alpha P_\A^0)\subset -P_\B^0$. 
Since 
\[
T(-\alpha)T(-\bar{\alpha})=T1=1
\]
and 
\[
-T(-\alpha)=T(-1)T(-\alpha)=T(\alpha)
\]
by Lemma \ref{30} and \ref{7}, 
we see that
\[
T(\alpha P_\A^0)\subset T(\alpha)P_\B^0.
\]

Next we show that 
\[
T(\alpha P_\A^0(\phi (y))\subset T(\alpha)P_\B^0(y).
\]
Suppose that $u \in P_\A^0(\phi (y))$. Then by the above we have 
that $T(\alpha u)\in T(\alpha )P_\B^0$. Since $T(\bar{\alpha})
T(\alpha)=T1=1$ by Lemma \ref{7}, we see that
$T(\bar{\alpha})T(\alpha u)\in P_\B^0$. On the other hand, by Theorem 
\ref{0korovkin} and Lemma \ref{0}, 
\[
|(T(\bar{\alpha})T(\alpha u))(y)|=|\bar{\alpha}(\phi (y))|
|\alpha u(\phi (y))|=1,
\]
so that $(T(\bar{\alpha})T(\alpha u))(y)=1$
since $T(\bar{\alpha})T(\alpha u) \in P_\B^0$. It follows that 
$T(\bar{\alpha})T(\alpha u)\in P_\B^0(y)$ and so 
$T(\alpha u) \in (T\alpha)P_\B^0(y)$. We see that
\[
T(\alpha P_\A^0(\phi (y))\subset T(\alpha)P_\B^0(y).
\]

We show that 
\[
T(\alpha P_\A^0(\phi (y))\supset T(\alpha)P_\B^0(y).
\]
Suppose that $U\in P_\B^0(y)$. Then
\begin{multline*}
\|-\bar{\alpha}T^{-1}((T\alpha) U)-1\|_{\infty(X)}=
\|T(-\bar{\alpha})T(\alpha )U-1\|_{\infty(Y)}\\
=
\|-U-1\|_{\infty(Y)}=2,
\end{multline*}
so 
$-1\in \sigma_{\pi}(-\bar{\alpha}T^{-1}((T\alpha)U)$. 
Suppose that $\beta \in \sigma_{\pi}(-\bar{\alpha}T^{-1}((T\alpha)U)$.
Then we have that $|\beta|=1$. We also have 
\begin{multline*}
2=\|(-\bar{\beta})(-\bar{\alpha}T^{-1}((T\alpha)U))-1\|_{\infty(X)}\\
=
\|T((-\bar{\beta})(-\bar{\alpha}))(T\alpha)U-1\|_{\infty(Y)} 
=
\|(T((-\bar{\beta})(-\bar{\alpha})\alpha))U-1\|_{\infty(Y)}\\
=
\|(T(\bar{\beta}))U-1\|_{\infty(Y)}.
\end{multline*}
Since $|T(\bar{\beta})|=1$ on $\Ch (\B)$, we see that
$-1\in (T(\bar{\beta}))(\Ch (\B))$, so by Lemma \ref{4} 
we have that 
$\beta=-1$. We see that $\sigma_{\pi}(-\bar{\alpha}T^{-1}((T\alpha)U))
=\{-1\}$, so $-\bar{\alpha}T^{-1}((T\alpha)U)\in -P_\A^0$. 
On the other hand, 
\begin{multline*}
|(-\bar{\alpha}T^{-1}((T\alpha)U))(\phi (y))|=
|T^{-1}((T\alpha)U)(\phi (y))| \\
=|(T(\alpha))U(y)|=|U(y)|=1,
\end{multline*}
so 
\[
(-\bar{\alpha}T^{-1}((T\alpha)U))(\phi (y))=-1
\]
since 
$-\bar{\alpha}T^{-1}((T\alpha)U)\in -P_\A^0$.
Thus 
\[
-\bar{\alpha}T^{-1}((T\alpha)U)\in -P_\A^0(\phi (y)),
\]
so 
\[
T^{-1}((T\alpha)U)\in \alpha P_\A^0(\phi (y)).
\] 
Thus $(T\alpha)(U)\in T(\alpha P_\A^0(\phi (y)))$. 
Since $U\in P_\B^0(y)$ is arbitrary, we see that
$(T\alpha)(P_\B^0(y))\subset T(\alpha P_\A^0(\phi (y)))$.
It follows that 
\[
(T\alpha)(P_\B^0(y))= T(\alpha P_\A^0(\phi (y))).
\]
\end{proof}

\begin{theorem}\label{main}
Let $\A$ and $\B$ be uniform algebras on compact Hausdorff spaces 
$X$ and $Y$ respectively. Suppose that 
$T$ is a map from $\A^{-1}$ onto $\B^{-1}$. Suppose that 
the equality 
\[
\|TfTg-1\|_{\infty(Y)}=\|fg-1\|_{\infty(X)}
\]
holds for every $f,g\in \A^{-1}$. 
Then $(T1)^2=1$ and the map $T$ is extended to a map $T_E$ from 
$\A$ onto $\B$, and there exists a homeomorphism $\phi$ from 
$\Ch (\B)$ onto $\Ch (\A)$ and a clopen subset $K$ of $\Ch (\B)$ 
such that the equality
\begin{equation*}
T_Ef(y)=T1(y) \times
\begin{cases}
f(\phi (y)), & y\in K \\
\overline{f(\phi (y))}, & y\in \Ch (\B)\setminus K
\end{cases}
\end{equation*}
holds for every $f\in \A$. In particular, $\frac{T_E}{T1}$ is a real-algebra 
isomorphism. Thus we see that the equality
\[
\|T_EfT_Eg-1\|_{\infty (Y)}=\|fg-1\|_{\infty (X)}
\]
holds for every pair $f$ and $g$ in $\A$.
\end{theorem}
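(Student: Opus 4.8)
The plan is to normalise to the case $T1=1$, establish a pointwise representation of $T$ on $\A^{-1}$ over the Choquet boundary, and then extend it to $\A$ and read off the algebraic conclusions.

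By Lemma~\ref{0} we already have $(T1)^2=1$ and $\|TfTg\|_{\infty(Y)}=\|fg\|_{\infty(X)}$, so Theorem~\ref{0korovkin} furnishes a homeomorphism $\phi\colon\Ch(\B)\to\Ch(\A)$ with $|Tf(y)|=|f(\phi(y))|$ for $y\in\Ch(\B)$. Passing from $T$ to $T/T1$ (which, exactly as inside the proof of Lemma~\ref{0}, still satisfies the hypothesis and fixes $1$) we may assume $T1=1$; then the clopen set $K$ of Definition~\ref{K} and Lemmas~\ref{-2}, \ref{7}, \ref{8} are available. The main step is to prove that, for every $f\in\A^{-1}$ and every $y\in\Ch(\B)$,
\[
Tf(y)=f(\phi(y))\ \ (y\in K),\qquad Tf(y)=\overline{f(\phi(y))}\ \ (y\in\Ch(\B)\setminus K).
\]

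Fix $f$ and $y$, set $\lambda=f(\phi(y))$, $\omega=\lambda/|\lambda|$, $r=|\lambda|$, and treat $y\in K$ (the other case is symmetric through Lemma~\ref{7}); recall $|Tf(y)|=r$. By Lemma~\ref{bishop} pick $u\in P_\A^0(\phi(y))$ with $\sigma_{\pi}(fu)=\{\lambda\}$; then $fu$ has values in $\{z:|z|\le r\}$, $fu(\phi(y))=\lambda$, and, with $\alpha:=-\bar\omega$, the set $E:=(fu)^{-1}(\lambda)=(\alpha fu)^{-1}(-r)$ is a peak set for $\A$ containing $\phi(y)$ whose peaking function is $q:=\tfrac12\bigl(1-\tfrac1r\alpha fu\bigr)\in P_\A^0(\phi(y))$; moreover $\alpha fu=r(1-2q)$ and $\|\alpha fu-1\|_{\infty(X)}=r+1$, the supremum being attained precisely on $E$. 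Now apply the hypothesis to the pair $f,\alpha u$: by Lemma~\ref{8} we may write $T(\alpha u)=(T\alpha)H$ with $H\in P_\B^0(y)$, so $G:=Tf\,T(\alpha u)\in\B$ satisfies $\|G-1\|_{\infty(Y)}=\|\alpha fu-1\|_{\infty(X)}=r+1$ and $\|G\|_{\infty(Y)}=\|\alpha fu\|_{\infty(X)}=r$. Hence $G(Y)\subseteq\{z:|z|\le r\}$ and $\{y'\in Y:|G(y')-1|=r+1\}=G^{-1}(-r)$, a non-empty peak set for $\B$; by Lemma~\ref{7}, $G(y)=(T\alpha)(y)H(y)Tf(y)=-\bar\omega\,Tf(y)$, so the assertion $Tf(y)=\lambda$ is equivalent to $y\in G^{-1}(-r)$. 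To obtain this, one identifies the peaking function $\tfrac12(1-\tfrac1r G)\in P_\B$ that detects $G^{-1}(-r)$ with $Tq$ on $\Ch(\B)$: since $q\in P_\A^0(\phi(y))$, Lemma~\ref{-2} gives $Tq\in P_\B^0(y)$, so $Tq(y)=1$ and therefore $G(y)=-r$. This last identification is the step I expect to be the main obstacle, because it is precisely where the failure of $T$ to be multiplicative must be circumvented — the available tools being the identity $\|TfTg-1\|_{\infty(Y)}=\|fg-1\|_{\infty(X)}$ tested against unimodular constants and peaking functions based at $\phi(y)$ (Lemmas~\ref{7}, \ref{8}, \ref{-2}), together with the symmetric statements applied to $T^{-1}$.

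Granting the pointwise formula on $\A^{-1}$, the remainder is routine. Define $T_E\colon\A\to C(\Ch(\B))$ by $T_Ef(y)=T1(y)f(\phi(y))$ for $y\in K$ and $T_Ef(y)=T1(y)\overline{f(\phi(y))}$ for $y\in\Ch(\B)\setminus K$. For $f\in\A^{-1}$ this equals $Tf$ on the boundary $\Ch(\B)$, hence equals $Tf$ as an element of $\B$; for general $f\in\A$ one has $f+n\in\A^{-1}$ for all large integers $n$ and $T_E(f+n)=T_Ef+nT1$ on $\Ch(\B)$, so $T_Ef$ coincides on $\Ch(\B)$ with the element $T(f+n)-nT1$ of $\B$ and is regarded as such. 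Because $K$ is clopen, inspection of the defining formula shows $T_E/T1$ is a unital real-algebra homomorphism from $\A$ to $\B$; it is isometric since $\phi$ is a bijection of the Choquet boundaries, hence injective; and it is surjective because its restriction to $\A^{-1}$ maps onto $\B^{-1}$, it satisfies $(T_E/T1)(f+n)=(T_E/T1)(f)+n$ for real $n$, and every element of $\B$ becomes invertible after adding a large real constant. Thus $T_E/T1$ is a real-algebra isomorphism of $\A$ onto $\B$, $T_E$ maps $\A$ onto $\B$ and extends $T$, and finally $T_EfT_Eg=(T1)^2(T_E/T1)(f)(T_E/T1)(g)=(T_E/T1)(fg)$ gives $T_EfT_Eg-1=(T_E/T1)(fg-1)$, whence $\|T_EfT_Eg-1\|_{\infty(Y)}=\|fg-1\|_{\infty(X)}$ for all $f,g\in\A$.
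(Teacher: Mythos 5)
Your setup — reduction to $T1=1$ via $(T1)^2=1$, invoking Lemma~\ref{0} and Theorem~\ref{0korovkin} for the homeomorphism $\phi$ and the modulus identity, and then using $K$ from Definition~\ref{K} with Lemmas~\ref{-2}, \ref{7}, \ref{8} — matches the paper, and your extension from $\A^{-1}$ to $\A$ and the closing algebraic remarks are fine. But the heart of the theorem, the pointwise identity $Tf(y)=f(\phi(y))$ for $y\in K$, is exactly where your argument stops. The step you flag as ``the main obstacle'' is a genuine gap, not a technicality: you need $\tfrac12\bigl(1-\tfrac1r G\bigr)$, with $G=Tf\,T(\alpha u)$, to coincide with $Tq$ where $q=\tfrac12\bigl(1-\tfrac1r\alpha fu\bigr)$, and no available lemma identifies $Tq$ with any algebraic combination of $Tf$, $Tu$, $T\alpha$. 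Lemma~\ref{-2} only tells you $Tq\in P_\B^0(y)$ because $q\in P_\A^0(\phi(y))$; it says nothing about how $Tq$ relates to $G$. Since $T$ is not assumed multiplicative or additive, the identification you want is essentially equivalent to the conclusion you are trying to prove, so the argument is circular at that point.

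The paper closes this gap by a contradiction argument that avoids any such identification. Assume $Tf(y)\ne f(\phi(y))$. Choose $H\in P_\B^0(y)$ with $\sigma_\pi(TfH)=\{Tf(y)\}$ \emph{and} $|TfH|<|Tf(y)|$ on $\Ch(\B)\setminus K$ (possible because $\bar K$ and $\overline{\Ch(\B)\setminus K}$ are disjoint), set $h=T^{-1}H\in P_\A^0(\phi(y))$ and $\alpha=-\overline{f(\phi(y))}/|f(\phi(y))|$. One first shows the strict inequality $\|T\alpha\,Tf\,Th-1\|_{\infty(Y)}<|Tf(y)|+1$: if equality held, the extremal point $z$ would satisfy $(T\alpha\,Tf\,Th)(z)=-|Tf(y)|$, would be forced into $K$ by the choice of $H$, and then $T\alpha(z)=\alpha$ together with $\sigma_\pi(TfTh)=\{Tf(y)\}$ would yield $Tf(y)=f(\phi(y))$, contrary to assumption. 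The decisive move is then the \emph{reverse} inclusion of Lemma~\ref{8}, $(T\alpha)P_\B^0(y)\subset T(\alpha P_\A^0(\phi(y)))$: it lets one rewrite the three-factor product $(T\alpha)(Th)$ as $T(\alpha h')$ for a single $h'\in P_\A^0(\phi(y))$, so that the hypothesis applies to the pair $\alpha h', f$ and gives $\|T\alpha\,Tf\,Th-1\|_{\infty(Y)}=\|\alpha h'f-1\|_{\infty(X)}\ge|\alpha h'(\phi(y))f(\phi(y))-1|=|Tf(y)|+1$, contradicting the strict inequality. You only use Lemma~\ref{8} in the forward direction $T(\alpha u)=(T\alpha)H$; it is the reverse direction, converting a product of two images into the image of a single element, that substitutes for the missing multiplicativity and completes the proof.
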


\begin{proof}
Since 
\[
\|T1T1-1\|_{\infty(Y)}=\|1^2-1\|_{\infty(X)}=0,
\] 
we see that $(T1)^2=1$. 
First we consider the case where $T1=1$. By Lemma \ref{0} we see that 
$T$ is injective and $\|TfTg\|_{\infty(Y)}=\|fg\|_{\infty(X)}$ 
for every $f,g \in \A^{-1}$. 
Then by Theorem \ref{0korovkin} there exists a homeomorphism $\phi$ 
from $\Ch (\B)$ onto $\Ch (\A)$ such that $|Tf(y)|=|f(\phi (y))|$ holds for 
every $f\in \A^{-1}$ and every $y\in \Ch (\B)$. 
We show that $Tf(y)=f(\phi (y))$ if $y\in K$ and 
$Tf(y) = \overline{f(\phi (y))}$ if $y \in \Ch (\B)\setminus K$, 
where $K$ is defined as in Definition \ref{K}. 

Suppose that $y\in K$ and $Tf(y)\ne f(\phi (y))$. 
By Lemma \ref{bishop} there exists an $H\in P_\B^0(y)$ such that 
$\sigma_{\pi}(TfH)=\{Tf(y)\}$. 
Since $Ti=i$ on $K$ and $Ti=-i$ on $\Ch (\B)\setminus K$ we see that 
the closure $\bar K$ of $K$ in $Y$ and the closure 
$\overline{\Ch (\B) \setminus K}$ of $\Ch (\B) \setminus K$ are disjoint. 
Thus we may assume that
 $|TfH|<|Tf(y)|$ on $\Ch (\B)\setminus K$. Put $h=T^{-1}H$. Then 
 by Lemma \ref{8} we see that $h\in P_\A^0(\phi (y))$. 
 Put $\alpha = \frac{-\overline{f(\phi (y))}}{|f(\phi (y))|}$. 
Then we have that 
\[
\|T\alpha TfTh-1\|_{\infty(Y)}<|Tf(y)|+1.
\]
(Suppose not; $\|T\alpha TfTh-1\|_{\infty(Y)}\ge |Tf(y)|+1$. Since 
\[
\|T\alpha TfTh-1\|_{\infty(Y)}\le \|T\alpha\|_{\infty(Y)}
\|TfTh\|_{\infty(Y)}+1=
|Tf(y)|+1,
\]
we have that 
\[
\|T\alpha TfTh-1\|_{\infty(Y)}=|Tf(y)|+1,
\]
so 
there exists a 
$z\in \Ch (\B)$ such that 
\[
|(T\alpha TfTh)(z)-1|=|Tf(y)|+ 1.
\]
Since $|T\alpha |=1$ on $\Ch (\B)$, we have that
\begin{multline*}
\|T\alpha TfTh\|_{\infty(Y)}
=\|T\alpha TfTh\|_{\infty (\Ch (\B))}=
\|TfTh\|_{\infty (\Ch (\B))} \\
=\|TfTh\|_{\infty(Y)}=|Tf(y)|,
\end{multline*}
thus 
\begin{equation}\label{000}
(T\alpha TfTh)(z)=-|Tf(y)|.
\end{equation}
Then we see that $z$ is a point in $K$.
Suppose that $z\in \Ch (\B) \setminus K$. Then by the definition of $H$, 
$|(TfTh)(z)|<|Tf(y)|$ holds. Since $|T\alpha|=1$ we have that 
\[
|(T\alpha TfTh)(z)-1|\le |(T\alpha TfTh)(z)|+1<|Tf(y)|+1,
\]
which 
is a contradiction proving $z\in K$. 
So $T\alpha (z)=\alpha$  by Lemma \ref{7} 
and thus we have by the equation 
(\ref{000}) that
\[
\frac{-\overline{f(\phi (y))}}{|f(\phi (y))|}Tf(z)Th(z)=-|Tf(y)|.
\] 
So $|Tf(z)Th(z)|=|Tf(y)|$. Since $\sigma_{\pi}(TfTh)=\{Tf(y)\}$, 
we see that $Tf(z)Th(z)=Tf(y)$, so that 
$\frac{\overline{f(\phi (y))}}{|f(\phi (y))|}Tf(y)=|Tf(y)|$. 
Then $\overline{f(\phi (y))}Tf(y)=|Tf(y)|^2$ holds since 
$|Tf(y)|=|f(\phi (y))|$, so that $f(\phi (y))=Tf(y)$, 
which is a contradiction.) Then by Lemma \ref{8} there exists 
an $h'\in P_\A^0(\phi (y))$ such that 
$T(\alpha h')=(T\alpha)(Th)$. Thus 
\begin{multline*}
|Tf(y)|+1> \|T\alpha TfTh-1\|_{\infty(Y)}\\
=\|T(\alpha h')Tf-1\|_{\infty(Y)}
=\|\alpha h'f-1\|_{\infty(X)} \\
\ge 
|\alpha h'(\phi (y))f(\phi (y))-1|=
|f(\phi (y))|+1=|Tf(y)|+1,
\end{multline*}
which is a contradiction proving that $Tf(y)=f(\phi (y))$. 

Suppose that $y\in \Ch (\B)\setminus K$. We show that $Tf(y)=
\overline{f(\phi (y))}$. 
Suppose not. By Lemma \ref{bishop} and Lemma \ref{8} 
there exists an $h \in P_\A^0(\phi (y))$ 
such that $\sigma_{\pi}(TfTh)=\{Tf(y)\}$. We may assume that 
$|TfTh|<|Tf(y)|$ on $K$ as the same way as in the case that 
$y\in K$. Put $\alpha =
\frac{-\overline{f(\phi (y))}}{|f(\phi (y))|}$. Then 
as in the same way as in the case where $y\in K$, we see that 
\[
\|T\alpha TfTh-1\|_{\infty(Y)}<|Tf(y)|+1.
\]
By Lemma \ref{8} there exists an $h'
\in P_\A^0(\phi (y))$ such that $T(\alpha h')=T(\alpha)Th$. 
Then
we have that
\begin{multline*}
\|T\alpha TfTh-1\|_{\infty(Y)}=\|T(\alpha h')Tf-1\|_{\infty(Y)}=
\|\alpha h'f-1\|_{\infty(X)} \\
\ge
|\alpha h'(\phi (y))f(\phi (y))-1|=|f(\phi (y))|+1
=|Tf(y)|+1,
\end{multline*}
which is a contradiction proving that 
$Tf(y)=\overline{f(\phi (y))}$. 
So we see that the equality 
\begin{equation*}
Tf(y)=
\begin{cases}
f(\phi (y)), & y\in K, \\
\overline{f(\phi (y))}, & y\in \Ch (\B)\setminus K
\end{cases}
\end{equation*}
holds for every $f\in \A^{-1}$ if $T1=1$. 

We consider the general case. Put $\tilde T=\frac{T}{T1}$. 
By a simple calculation we see that 
$\tilde T$ is a map from $\A^{-1}$ onto $\B^{-1}$ such that 
\[
\|\tilde Tf\tilde Tg-1\|_{\infty(Y)}=\|fg-1\|_{\infty(X)}
\]
holds for every $f,g \in \A^{-1}$ and 
$\tilde T1=1$. Then by the first part of the proof 
we see that there is a homeomorphism $\phi$ from 
$\Ch (\B)$ onto $\Ch (\A)$ and a clopen subset 
$K$ of $\Ch (\B)$ such that the equality
\begin{equation*}
\frac{Tf(y)}{T1(y)}=\tilde T(f)(y)=
\begin{cases}
f(\phi (y)), & y\in K \\
\overline{f(\phi (y))}, & y\in \Ch (\B)\setminus K.
\end{cases}
\end{equation*}
holds for every $f\in \A^{-1}$. 
Let $I_{\A}$ (resp. $I_{\B}$) denote the map $I_{\A}f=f|_{\Ch (\A)}$ 
(resp. $I_{\B}f=f|_{\Ch (\B)}$) from $\A$ (resp. $\B$) onto 
$\A|_{\Ch (\A)}$ (resp. $B|_{\Ch (\B)}$). Then $I_{\A}$ (resp. 
$I_{\B}$) is an algebra isomorphism from $\A$ (resp. $\B$) onto 
$A|_{\Ch (\A)}$ (resp. $B|_{\Ch (\B)}$). Put
a map $T_0$ from $A|_{\Ch (\A)}$ into $B|_{\Ch (\B)}$ by 
\begin{equation*}
T_0(f|_{\Ch (\A)})(y)=
T1(y) \times 
\begin{cases}
f(\phi (y)), & y\in K, \\
\overline{f(\phi (y))}, & y\in \Ch (\B)\setminus K
\end{cases}
\end{equation*}
for $f|_{\Ch (\A)} \in \A|_{\Ch (\A)}$. Put 
\[
T_E=I_{\B}^{-1}\circ T_0\circ I_{\A}.
\]
Then it is easy to see that $T_E$ is a bijection from 
$\A$ onto $\B$ which is an extension of $T$, and $\frac{T_E}{T1}$ is a 
real-algebra isomorphism from $\A$ onto $\B$ which is an extension of 
$\frac{T}{T1}$.  By the definition  we see that the equality
\begin{equation*}
T_Ef(y)=T1(y)\times
\begin{cases}
f(\phi (y)), & y\in K, \\
\overline{f(\phi (y))}, & y\in \Ch (\B) \setminus K
\end{cases}
\end{equation*}
holds for every $f\in \A$. 
\end{proof}

Let $A$ be a unital commutative Banach algebra. 
Recall that 
the spectral radius of $f\in A$ is denoted 
by $\rr (f)$. Then by the definition 
\[
\rr (f)=\|\hat f\|_{\infty (M_A)}
\]
holds for every $f\in A$, where $\hat f$ denotes the Gelfand transformation 
in $A$.

Recall that $\mcl A$ is the uniform closure of the Gelfand transform of $A$ 
in $C(M_A)$. If $A$ is semisimple, then we may suppose that 
$A\subset \mcl A$. Recall also that the Gelfand transform of 
$f\in A$ is denoted also by $f$ omitting $\hat{\cdot}$ for 
simplicity.

\begin{cor}\label{semisimplemain}
Let $A$ be a unital semisimple commutative Banach algebra and $B$ a unital 
commutative Banach algebra. Suppose that $T$ is a map from $A^{-1}$ onto 
$B^{-1}$ which satisfies that the equation 
\[
\rr (TfTg-1)=\rr (fg-1)
\]
holds 
for every pair $f$ and $g$ in $A^{-1}$. 
Then $B$ is semisimple and $(T1)^2=1$, and $T$ is 
extended to a map $T_E$ from 
$\mcl A$ onto $\mcl B$ such that $T_EA=B$, and 
there exist a homeomorphism $\phi$ from 
$\Ch (\mcl B)$ onto $\Ch (\mcl A)$ and a clopen subset $K$ of $\Ch (\mcl B)$ 
which satisfies that the equation
\begin{equation*}
T_Ef(y)= T1(y) \times 
\begin{cases}
f(\phi (y)), & y\in K \\
\overline{f(\phi (y))}, & y\in \Ch (\mcl B)\setminus K
\end{cases}
\end{equation*}
holds for every $f\in \mcl A$. 
In particular, $\frac{T_E}{T1}$  is a real-algebra isomorphism from 
$\mcl A$ onto $\mcl B$ and $\frac{T_E}{T1}(A)=B$; $A$ is 
real-algebraically isomorphic to $B$. Thus the equality
\[
\|T_EfT_Eg-1\|_{\infty (M_B)}=\|fg-1\|_{\infty (M_A)}
\]
holds for every pair $f$ and $g$ in $\mcl A$ and $T_E((\mcl A)^{-1})=
(\mcl B)^{-1}$. 
\end{cor}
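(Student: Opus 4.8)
The plan is to reduce everything to the uniform-algebra result of Theorem \ref{main}. Since $\rr(h)=\|\widehat h\|_{\infty(M_{\cdot})}$, the hypothesis says $\|\widehat{Tf}\,\widehat{Tg}-1\|_{\infty(M_B)}=\|\widehat{fg}-1\|_{\infty(M_A)}$ for all $f,g\in A^{-1}$; as $A$ is semisimple we identify $A\subset\mcl A$, $M_{\mcl A}=M_A$, and $\mcl A$ is a uniform algebra on $M_A$. First I would treat the case where $B$ is also semisimple, so that $B\subset\mcl B$ and $M_{\mcl B}=M_B$. The argument of Lemma \ref{0} goes through verbatim for $T\colon A^{-1}\to B^{-1}$ with the supremum norms on $M_A,M_B$ (every object it invokes — the integers $n$, the products $nf$, the inverses $f^{-1}$, and $T^{-1}$ on the image — stays in $A^{-1}$, resp.\ $B^{-1}$), giving $(T1)^2=1$, injectivity of $T$, and $\|TfTg\|_{\infty(M_B)}=\|fg\|_{\infty(M_A)}$. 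Since $(T1)^2=1$ forces $|T1|\equiv1$ on $M_B$, the same computation gives in addition $T(f^{-1})=(Tf)^{-1}$ and $\|Tf\|_{\infty(M_B)}=\|f\|_{\infty(M_A)}$ for every $f\in A^{-1}$.

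Next I would extend $T$ to a surjection $\widetilde T\colon(\mcl A)^{-1}\to(\mcl B)^{-1}$. Given $f\in(\mcl A)^{-1}$, pick $f_n\in A^{-1}$ with $\widehat{f_n}\to f$ uniformly on $M_A$; since $|f|$ is bounded away from $0$, $\|f_nf_m^{-1}-1\|_{\infty(M_A)}\to0$, and then $\|Tf_n(Tf_m)^{-1}-1\|_{\infty(M_B)}=\|Tf_n\,T(f_m^{-1})-1\|_{\infty(M_B)}=\|f_nf_m^{-1}-1\|_{\infty(M_A)}$ together with $\|Tf_m\|_{\infty(M_B)}=\|f_m\|_{\infty(M_A)}$ shows $\{Tf_n\}$ is Cauchy in $C(M_B)$; its limit $F$ satisfies $|F|\geq c>0$ because $\|(Tf_n)^{-1}\|_{\infty(M_B)}=\|f_n^{-1}\|_{\infty(M_A)}$ is bounded, so $F\in(\mcl B)^{-1}$. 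Routine checks give that $F$ is independent of the chosen sequence, that $\widetilde Tf:=F$ extends $T$, that (running the same argument for $T^{-1}$) $\widetilde T$ is onto $(\mcl B)^{-1}$, and, on passing to limits, that $\|\widetilde Tf\,\widetilde Tg-1\|_{\infty(M_B)}=\|fg-1\|_{\infty(M_A)}$ holds for all $f,g\in(\mcl A)^{-1}$.

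Now Theorem \ref{main} applied to $\widetilde T\colon(\mcl A)^{-1}\to(\mcl B)^{-1}$ delivers $(T1)^2=1$, an extension $T_E\colon\mcl A\to\mcl B$ of $\widetilde T$ (hence of $T$), a homeomorphism $\phi\colon\Ch(\mcl B)\to\Ch(\mcl A)$, and a clopen set $K\subset\Ch(\mcl B)$ for which the displayed formula holds, with $\frac{T_E}{T1}$ a real-algebra isomorphism of $\mcl A$ onto $\mcl B$. Since $\frac1{T1}=T1\in B^{-1}$, the map $\frac{T_E}{T1}$ carries $A^{-1}$ onto $T1\cdot T(A^{-1})=T1\cdot B^{-1}=B^{-1}$; being additive, it carries $A=A^{-1}+A^{-1}$ onto $B^{-1}+B^{-1}=B$, so $\frac{T_E}{T1}|_A$ is a real-algebra isomorphism of $A$ onto $B$ and $T_EA=B$. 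The identity $\|T_EfT_Eg-1\|_{\infty(M_B)}=\|fg-1\|_{\infty(M_A)}$ on $\mcl A$ follows from the explicit formula for $T_E$, using $(T1)^2=1$, the equality $|\overline w-1|=|w-1|$, the fact that $\phi$ is a homeomorphism, and that $\Ch(\mcl A),\Ch(\mcl B)$ are boundaries; and $T_E((\mcl A)^{-1})=\widetilde T((\mcl A)^{-1})=(\mcl B)^{-1}$. For a general, possibly non-semisimple $B$, I would argue as in Corollary \ref{commutative}: $\Gamma_B\circ T\colon A^{-1}\to(\widehat B)^{-1}$ satisfies the same hypothesis with the semisimple Banach algebra $\widehat B$ in place of $B$, so the case already settled makes $\Gamma_B\circ T$ injective on $A^{-1}$; surjectivity of $T$ onto $B^{-1}$ then forces $\Gamma_B$ to be injective on $B^{-1}$, hence (by additivity and $B=B^{-1}+B^{-1}$) on $B$, so $B$ is semisimple and the earlier case applies.

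The step I expect to be the main obstacle is the extension to $\widetilde T$: because $T$ is not assumed multiplicative, the quotient trick used for group homomorphisms in Corollary \ref{commutative} is unavailable, and the point that makes the Cauchy estimate work is the relation $T(f^{-1})=(Tf)^{-1}$ extracted from the proof of Lemma \ref{0}. Once that is in hand, the verifications that $\widetilde T$ is well defined, surjective and norm-compatible, the application of Theorem \ref{main}, and the passage from $\mcl A,\mcl B$ back to $A,B$ are all routine.
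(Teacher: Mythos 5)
Your proposal is correct and follows essentially the same route as the paper: reduce to the case of semisimple $B$ (handling general $B$ at the end via the Gelfand transform), extract $T(f^{-1})=(Tf)^{-1}$ from $\rr(TfT(f^{-1})-1)=0$, extend $T$ by a Cauchy-sequence argument to a surjection of $(\mcl A)^{-1}$ onto $(\mcl B)^{-1}$, apply Theorem \ref{main}, and recover $T_EA=B$ from real-linearity together with the decomposition of elements of $A$ (resp.\ $B$) as sums involving invertibles. The only cosmetic difference is that you re-run the Lemma \ref{0} argument in the Banach-algebra setting to get the full equality $\|Tf\|_{\infty(M_B)}=\|f\|_{\infty(M_A)}$ before extending, whereas the paper makes do with the cruder bound $\rr(Tf_m)\le \rr(Tf_mT1-1)+1\le M+2$; both suffice for the Cauchy estimate.
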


\begin{proof}
First we consider the case where $B$ is semisimple. Let $f\in A^{-1}$. Since 
\[
\rr (TfTf^{-1}-1) 
= \rr (ff^{-1}-1)=0
\]
and $B$ is assumed to be semisimple, we have that 
$Tf^{-1}=(Tf)^{-1}$. So 
\[
\rr(\frac{Tf}{Tg}-1)=\rr (\frac{f}{g}-1)
\]
holds for every pair $f$ and $g$ in $A^{-1}$. 
(Recall that we denote the Gelfand transform of $f\in A$ also by $f$; omitting 
$\hat \cdot$, and we suppose that 
\[
A\subset \mcl A \subset C(M_A),\quad B\subset \mcl B\subset C(M_B),
\]
so that such a formula like $\frac{f}{g}$ is well-defined and so on.) 
We show that $T$ is extended to a map 
from $(\mcl A)^{-1}$ 
onto $(\mcl B)^{-1}$. Note that $M_A=M_{\mcl A}$ and so 
$(\mcl A)\cap C(M_A)^{-1}
=(\mcl A)^{-1}$. Let $f \in (\mcl A)^{-1}$. Then there exists a sequence 
$\{f_n\}$ in $A^{-1}$ such that $\|f-f_n\|_{\infty (M_A)}\to 0$ 
as $n\to \infty$. Since $f$ is invertible 
there exists a positive number $M$ such that 
$\frac{1}{M}<|f_n|<M$ holds on $M_A$ 
for every $f_n$. 
Thus 
\[
\frac{1}{M}|f_n(x)-f_m(x)|\le |\frac{f_n(x)}{f_m(x)}-1|
\le M|f_n(x)-f_m(x)|
\]
hold for every $x\in M_A$, so 
we have that 
\[
\frac{1}{M}\rr (f_n-f_m)\le \rr (\frac{Tf_n}{Tf_m}-1) 
\le M\rr (f_n-f_m)
\]
since $\rr (\frac{Tf_n}{Tf_m}-1)=\rr (\frac{f_n}{f_m}-1)$. 
Since $\rr (T1T1-1)=\rr (1^2-1)=0$ we have that $(T1)^2=1$ and so we have that 
\begin{multline*}
\rr (Tf_m)= \rr (Tf_mT1)\le \rr (Tf_mT1-1)+1\\
=\rr (f_m-1)+1\le M+2.
\end{multline*}
It follows that 
\[
\rr (Tf_n-Tf_m)\le \rr (Tf_m)\rr (\frac{Tf_n}{Tf_m}-1)\le M(M+2)\rr (f_n-f_m),
\]
so that $\{Tf_n\}$ is a Cauchy sequence in $B^{-1}$ with respect to the 
supremum norm on $M_B$. Thus there is an 
$F\in C(M_B)$ with 
\[
\|Tf_n-F\|_{\infty (M_B)}\to 0
\]
as $n\to \infty$. In a way similar to the above 
we see that $\rr (\frac{1}{Tf_m})\le M+2$ and so that 
$\frac{1}{M+2} \le |Tf_m|$ holds on $M_B$. Thus we see that $\frac{1}{M+2}
\le |F|$ on $M_B$. It follows that $F$ is invertible in $C(M_B)$ and so 
$F\in (\mcl B)^{-1}$ since $M_{\mcl B}=M_B$. 
In a routine argument we see that for each $f\in (\mcl A)^{-1}$ $F$ is 
uniquely determined; it does not depend on the choice of the 
sequence $\{f_n\}$ which converges to $f$. 
We define a map $T_e$ from $(\mcl A)^{-1}$ into $(\mcl B)^{-1}$ by $T_ef=F$. 
We show that $T_e$ is a surjection. Suppose that $F\in (\mcl B)^{-1}$. Then 
there 
is a sequence $\{F_n\}$ in $B$ such that $\|F_n-F\|_{\infty (M_B)}\to 0$ as $n 
\to \infty$. Since $F\in (\mcl B)^{-1}$ and $M_{\mcl B}=
M_B$, we may assume that $F_n\in B^{-1}$ for every $n$. 
Since $TA^{-1}=B^{-1}$, there exists a sequence $\{f_n\}$ in $A^{-1}$ 
with $Tf_n=F_n$ for every positive integer $n$.  As in a way 
similar to the above
we see $\{f_n\}$ is a Cauchy sequence which uniformly converges to some $f\in 
(\mcl A)^{-1}$. By the definition of $T_e$ we have that $T_ef=F$; $T_e$ is a 
surjection. Suppose that $f,g\in (\mcl A)^{-1}$. Then there are 
some $\{f_n\}$ and $\{g_n\}$ in $A^{-1}$ such that $f_n\to f$ and 
$g_n \to g$ as $n\to \infty$. Then 
we see that 
\[
\rr (Tf_nTg_n-1)=\rr (f_ng_n-1)
\]
and letting 
$n\to \infty$ we have that
\[
\|T_efT_eg-1\|_{\infty (M_B)}=\|fg-1\|_{\infty (M_A)}.
\] 
Applying Theorem \ref{main} to the map $T_e$ from $(\mcl A)^{-1}$ onto 
$(\mcl B)^{-1}$, we see that there exists a 
clopen subset $K$ of $\Ch (\mcl B)$ and $T_e$ is extended to a map 
$T_E$ from $\mcl A$ onto $\mcl B$ such that the equality
\begin{equation*}
T_Ef(y)=T1(y)\times
\begin{cases}
f(\phi (y)),& y\in K, \\
\overline{f(\phi (y))}, & y\in \Ch (\mcl B)\setminus K
\end{cases}
\end{equation*}
holds for every $f\in \mcl A$. It follows that $\frac{T_E}{T1}$ 
is a real-algebra isomorphism from $\mcl A$ onto $\mcl B$. 
We also see that $\frac{T_E}{T1}(A)=B$. (Since $T_E=T$ on $A^{-1}$ and 
$T1\in B^{-1}$ we see that $\frac{T_E}{T1}(A^{-1})=B^{-1}$. 
Suppose that $F\in B$. Then there exist an $F_0\in B^{-1}$ and a complex
number 
$\lambda$ with $F=F_0+\lambda$. If $\lambda =0$, then $F=F_0$ and 
there exists an $f_0\in A^{-1}$ with $Tf_0=T1F_0$, so that
$\frac{T_E}{T1}(f_0)=F_0=F$. If $\lambda\ne 0$, then there exist 
an $f_0\in A^{-1}$ and $f_{\lambda} \in A^{-1}$ with 
$Tf_0=T1F_0$ and $Tf_{\lambda}=\lambda T1$. Since $T_E$ is real-linear, 
we have that 
$T_E(f_0+f_{\lambda})=T1F$. We see that 
$\frac{T_E}{T1}(A)=B$.)

Finally we consider the general case; $B$ is not assumed to be semisimple. 
Let $\Gamma$ denote the Gelfand transform on $B$. Then $\Gamma \circ T$ is a 
function from $A^{-1}$ onto $(\hat B)^{-1}$, where $\hat B$ is the 
Gelfand transform of $B$. Then by the first part of the proof we see that 
there exist a homeomorphism $\phi$ from 
$\Ch (\mcl \hat B)$ onto $\Ch (\mcl A)$ 
and a clopen subset $K$ of $\Ch (\mcl \hat B)$ such that the equality
\begin{equation*}
\Gamma \circ Tf(y)=\Gamma \circ T1(y)\times 
\begin{cases}
f(\phi(y)), & y\in K, \\
\overline{f(\phi (y))}, & y\in \Ch (\mcl \hat B)\setminus K.
\end{cases}
\end{equation*}
holds for every $f\in A^{-1}$. 
In particular, we see that $\Gamma \circ T$ is an injection from $A^{-1}$ onto 
$(\hat B)^{-1}$, so $\Gamma$ is injective on $B^{-1}$ and so $\Gamma$ is injective 
on $B$ by a simple calculation. Thus we see that $B$ is semisimple. 
Then applying the first case we have the conclusion. 
\end{proof}

\begin{cor}\label{cor3.2}
Let $A$ be a unital semisimple commutative Banach algebra and $B$ a unital 
commutative Banach algebra. Suppose that $T$ is a map from $A^{-1}$ onto 
$B^{-1}$ which satisfies that the equality 
\[
\rr (TfTg-1)=\rr (fg-1)
\]
holds 
for every pair $f$ and $g$ in $A^{-1}$. 
Suppose that there exists a $\lambda \in {\mathbb C}\setminus 
\left({\mathbb R}\cup i{\mathbb R}\right)$ 
such that $T\lambda =\lambda$. Then $B$ is 
semisimple, $T$ is extended to a complex-algebra isomorphism $T_E$ from 
$\mcl A$ onto $\mcl B$ such that $T_EA=B$ and 
$T_E((\mcl A)^{-1})=(\mcl B)^{-1}$, and 
there exists a homeomorphism $\phi$ from $\Ch (\mcl B)$ 
onto $\Ch (\mcl A)$ such that the equality 
\[
T_Ef(y)=f(\phi (y)), \quad y\in \Ch (\mcl B)
\]
holds for every $f\in \mcl A$. 
\end{cor}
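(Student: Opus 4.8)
The plan is to reduce everything to Corollary \ref{semisimplemain} and then exploit the extra normalization $T\lambda=\lambda$ to pin down the two pieces of data ($T1$ and the clopen set $K$) that separate the real-algebra isomorphism of that corollary from a genuine complex-algebra isomorphism. Applying Corollary \ref{semisimplemain} to $T$ (its hypotheses are exactly those assumed here), we obtain at once that $B$ is semisimple, that $(T1)^2=1$, that $T$ extends to a surjection $T_E$ from $\mcl A$ onto $\mcl B$ with $T_EA=B$ and $T_E((\mcl A)^{-1})=(\mcl B)^{-1}$, and that there are a homeomorphism $\phi$ from $\Ch(\mcl B)$ onto $\Ch(\mcl A)$ and a clopen subset $K$ of $\Ch(\mcl B)$ with
\[
T_Ef(y)=T1(y)\times
\begin{cases}
f(\phi(y)), & y\in K\\
\overline{f(\phi(y))}, & y\in \Ch(\mcl B)\setminus K
\end{cases}
\]
for every $f\in \mcl A$, the quotient $T_E/T1$ being a real-algebra isomorphism. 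It therefore suffices to show that $T1=1$ and $K=\Ch(\mcl B)$.

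To do this I would substitute the constant function $\lambda$ into the displayed identity. Since $A$ is unital, $\lambda 1_A\in A^{-1}$, so $T_E\lambda=T\lambda=\lambda$, and the identity gives $\lambda=T1(y)\lambda$ for $y\in K$ and $\lambda=T1(y)\overline{\lambda}$ for $y\in\Ch(\mcl B)\setminus K$. The first equation forces $T1(y)=1$ for $y\in K$. For the second, recall $(T1)^2=1$, so $T1(y)\in\{1,-1\}$ on $\Ch(\mcl B)$: if $T1(y)=1$ we would get $\lambda=\overline{\lambda}$, i.e. $\lambda\in\mathbb R$, and if $T1(y)=-1$ we would get $\lambda=-\overline{\lambda}$, i.e. $\lambda\in i\mathbb R$, both contradicting $\lambda\notin\mathbb R\cup i\mathbb R$. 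Hence $\Ch(\mcl B)\setminus K=\emptyset$, that is $K=\Ch(\mcl B)$, and then $T1=1$ on $\Ch(\mcl B)$; since $\Ch(\mcl B)$ is a boundary for $\mcl B$, this yields $T1=1$.

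With $T1=1$ and $K=\Ch(\mcl B)$ the displayed identity reads $T_Ef(y)=f(\phi(y))$ for all $y\in\Ch(\mcl B)$ and all $f\in\mcl A$. As $\Ch(\mcl B)$ is a boundary for $\mcl B$, each $T_Ef$ is the unique element of $\mcl B$ whose restriction to $\Ch(\mcl B)$ is $f\circ\phi$, so the assignment $f\mapsto T_Ef$ is visibly complex-linear and multiplicative; together with the bijectivity already furnished by Corollary \ref{semisimplemain}, this makes $T_E$ a complex-algebra isomorphism from $\mcl A$ onto $\mcl B$. The remaining assertions $T_EA=B$, $T_E((\mcl A)^{-1})=(\mcl B)^{-1}$ and $T_Ef(y)=f(\phi(y))$ on $\Ch(\mcl B)$ are then exactly what has been established. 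There is essentially no obstacle beyond bookkeeping: one must only be careful that $T_E$ restricted to the constants coincides with $T$ and that ``$\Ch(\mcl B)$ is a boundary'' is what upgrades the pointwise identities on $\Ch(\mcl B)$ to identities in $\mcl B$; the real content is entirely inside Corollary \ref{semisimplemain}.
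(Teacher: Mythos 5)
Your proposal is correct and follows essentially the same route as the paper: invoke Corollary \ref{semisimplemain} to get the representation with $T1$ and the clopen set $K$, then evaluate at the constant $\lambda$ to rule out $\Ch(\mcl B)\setminus K$ (since $T1(y)\in\{1,-1\}$ would force $\lambda\in\mathbb{R}$ or $\lambda\in i\mathbb{R}$) and to conclude $T1=1$. The only difference is cosmetic: you extract $T1=1$ on $K$ from the first case of the identity, while the paper deduces it afterwards from $\lambda=T1\lambda$ on $\Ch(\mcl B)$.
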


\begin{proof}
By Corollary \ref{semisimplemain} and its proof, we see that 
$B$ is semisimple and $(T1)^2=1$, and $T$ is 
extended to a map $T_E$ from $\mcl A$ onto $\mcl B$ 
such that there exists a homeomorphism $\phi$ from 
$\Ch (\mcl B)$ onto $\Ch (\mcl A)$ and a clopen subset $K$ of $\Ch (\mcl B)$ 
which satisfies that the equation
\begin{equation*}
T_Ef(y)= T1(y) \times 
\begin{cases}
f(\phi (y)), & y\in K \\
\overline{f(\phi (y))}, & y\in \Ch (\mcl B)\setminus K
\end{cases}
\end{equation*}
holds for every $f\in \mcl A$. In particular, $\frac{T_E}{T1}$ 
is a real-algebra 
isomorphism from $\mcl A$ onto $\mcl B$ such that 
$\frac{T_E}{T1}(A)=B$. 
We show that $K=\Ch (\mcl B)$. Suppose not. Then there is a $y\in \Ch (\mcl B)
\setminus K$. So by the hypothesis, 
\[
\lambda = T\lambda (y) = T1(y)\overline{\lambda (\phi (y))}
=\bar{\lambda}T1(y), 
\]
which is a contradiction since $T1(y)=1$ or $-1$ and 
$\lambda \in {\mathbb C}\setminus 
\left({\mathbb R}\cup i{\mathbb R}\right)$. Then we also see that 
$T1=1$ since 
\[
\lambda=T\lambda =T_E\lambda =T1\lambda
\]
on $\Ch(\mcl B)$ 
\end{proof}

\begin{cor}\label{cor3.3}
Let $A$ and $B$ 
be unital semisimple commutative Banach algebras. 
Suppose that $T$ is a group homomorphism from $A^{-1}$ onto $B^{-1}$. 
Then the following are equivalent.

{\rm (i)} $T$ is isometry with respect to the spectral radius, that is, 
the equality 
\[
\rr (Tf-Tg)=\rr (f-g)
\]
holds 
for every pair $f$ and $g$ in $A^{-1}$. 

{\rm (ii)} The equality 
\[
\rr (Tf-1)=\rr (f-1)
\]
holds for every $f\in A^{-1}$. 

{\rm (iii)} There exist a homeomorphism from $\Ch (\mcl B)$ onto 
$\Ch (\mcl A)$ and a clopen subset $K$ of $\Ch (\mcl B)$ such that 
the equality
\begin{equation*}
Tf(y)=
\begin{cases}
f(\phi (y)), & y\in K \\
\overline{f(\phi (y))}, & y\in \Ch (\mcl B)\setminus K
\end{cases}
\end{equation*}
holds for every $f\in A^{-1}$.

\noindent
Thus if one of the above holds, then $T$ is extended to a real-algebra 
isomorphism from $A$ onto $B$; $A$ is real-algebraically isomorphic to $B$. 
\end{cor}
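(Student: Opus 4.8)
The plan is to prove the equivalence by establishing the cycle (i) $\Rightarrow$ (ii) $\Rightarrow$ (iii) $\Rightarrow$ (i), and then to read off the concluding assertion from the second implication. The first thing I would record is that a surjective group homomorphism $T$ sends the identity to the identity: $T1 = T(1\cdot 1) = (T1)^2$ forces $T1 = 1$ in $B^{-1}$. This is used throughout.

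For (i) $\Rightarrow$ (ii) I would simply put $g = 1$ in (i) and use $T1 = 1$ to get $\rr(Tf - 1) = \rr(Tf - T1) = \rr(f - 1)$. For (ii) $\Rightarrow$ (iii), the key point is that multiplicativity of $T$ on $A^{-1}$ lets me rewrite $TfTg = T(fg)$ with $fg \in A^{-1}$, so (ii) applied to $fg$ yields $\rr(TfTg - 1) = \rr(fg - 1)$ for all $f, g \in A^{-1}$; this is precisely the hypothesis of Corollary \ref{semisimplemain}. Invoking that corollary produces a homeomorphism $\phi$ from $\Ch(\mcl B)$ onto $\Ch(\mcl A)$, a clopen subset $K$ of $\Ch(\mcl B)$, and an extension $T_E$ of $T$ to a surjection from $\mcl A$ onto $\mcl B$ with $T_E(A) = B$, such that
\begin{equation*}
T_E f(y) = T1(y) \times
\begin{cases}
f(\phi(y)), & y \in K, \\
\overline{f(\phi(y))}, & y \in \Ch(\mcl B) \setminus K,
\end{cases}
\end{equation*}
for every $f \in \mcl A$. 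Since $T1 = 1$ and $T_E$ agrees with $T$ on $A^{-1}$, restricting this formula to $A^{-1}$ gives exactly (iii); moreover $\frac{T_E}{T1} = T_E$ being a real-algebra isomorphism of $\mcl A$ onto $\mcl B$ carrying $A$ onto $B$ is exactly the final ``real-algebraically isomorphic'' conclusion of the statement, so that part comes for free.

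For (iii) $\Rightarrow$ (i) I would argue that the representation in (iii) gives, for $f, g \in A^{-1}$ and $y \in \Ch(\mcl B)$, the equality $|Tf(y) - Tg(y)| = |(f - g)(\phi(y))|$ (the complex conjugation on $\Ch(\mcl B)\setminus K$ does not affect moduli). Since $\Ch(\mcl B)$ is a boundary for $\mcl B$, the supremum over $\Ch(\mcl B)$ of the modulus of an element of $\mcl B$ equals its spectral radius, and since $\phi$ is a bijection of $\Ch(\mcl B)$ onto $\Ch(\mcl A)$, taking suprema yields
\[
\rr(Tf - Tg) = \|Tf - Tg\|_{\infty(\Ch(\mcl B))} = \|f - g\|_{\infty(\Ch(\mcl A))} = \rr(f - g),
\]
noting $Tf - Tg \in B \subset \mcl B$ and $f - g \in A \subset \mcl A$. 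This is (i), closing the cycle, and the last sentence of the statement follows from the isomorphism produced in the (ii) $\Rightarrow$ (iii) step, which applies under any of the three hypotheses. The only substantive ingredient is Corollary \ref{semisimplemain}; everything else --- the identity $T1 = 1$, the passage from (ii) to its multiplicative form, and the boundary property giving $\|\cdot\|_{\infty(M_B)} = \|\cdot\|_{\infty(\Ch(\mcl B))}$ --- is routine, so I would not expect a real obstacle beyond correct bookkeeping of which space ($M$ versus $\Ch$) the various norms and spectral radii live on.
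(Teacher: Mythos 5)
Your proposal is correct and follows essentially the same route as the paper: the cycle (i) $\Rightarrow$ (ii) $\Rightarrow$ (iii) $\Rightarrow$ (i), with $T1=1$ making (i) $\Rightarrow$ (ii) immediate, multiplicativity reducing (ii) to the hypothesis of Corollary \ref{semisimplemain} for (ii) $\Rightarrow$ (iii), and the explicit representation giving (iii) $\Rightarrow$ (i); you merely spell out the steps the paper labels trivial. No issues.
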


\begin{proof}
The inclusion (iii) $\to$ (i) is trivial. Since $T1=1$, (i) implies (ii) is 
also trivial. By Corollary \ref{semisimplemain} we see that (ii) implies 
(iii). Thus $T$ is extended to a real-algebra isomorphism from 
$A$ onto $B$ if at least one of (i), (ii) and (iii) holds.
\end{proof}

The group isomorphism in Corollary \ref{cor3.3}   is not 
extended to the {\it complex} algebra isomorphism in general, as the following 
example shows.

\begin{example}
Let $A(\bar D)$ be the disk algebra on the closed unit disk; the algebra of 
all complex-valued continuous functions on the closed unit disk $\bar D$ 
which are analytic on the interior of the disk, and
$\overline{A(\bar D)}=\{f\in C(\bar D)|\bar f\in A(\bar D)\}$. 
Put ${\mathcal A}
=A(\bar D)\oplus {A(\bar D)}$, the direct sum of two copies of $A(\bar D)$ 
and 
${\mathcal B}
=A(\bar D)\oplus \overline{A(\bar D)}$, the direct sum of $A(\bar D)$ 
and $\overline{A(\bar D)}$. 
Then ${\mathcal A}$ and ${\mathcal B}$ are uniform algebras on 
$X=\bar D \times \{1,2\}$. 
Then $T(f\oplus g)=f\oplus \bar g$ defined on ${\mathcal A}^{-1}$ 
is a group isomorphisms onto ${\mathcal B}^{-1}$ such that 
\[
\|T(f\oplus g)-1\|_{\infty (X)}=\|f\oplus  g-1\|_{\infty (X)}. 
\]
On the other hand 
${\mathcal A}$ is not algebraically isomorphic to ${\mathcal B}$ 
as a complex algebra.
\end{example} 

\begin{cor}\label{cor3.4}
Let $A$ and $B $ be unital semisimple commutative Banach algebras. 
Suppose that $T$ is a group homomorphism 
 from $A^{-1}$ onto 
$B^{-1}$ which satisfies that the equality 
\[
\rr (Tf-1)=\rr (f-1)
\]
holds 
for every $f \in A^{-1}$. 
Suppose that there exists a $\lambda \in {\mathbb C}\setminus 
{\mathbb R}$ such that $T\lambda =\lambda$. Then $T$ is extended to a 
complex-algebra isomorphism $T_E$ from $\mcl A$ onto $\mcl B$ with $T_EA=B$, 
and 
there exists a homeomorphism $\phi$ from $\Ch (\mcl B)$ 
onto $\Ch (\mcl A)$ such that the equality 
\[
T_Ef(y)=f(\phi (y)), \quad y\in \Ch (\mcl B)
\]
holds for every $f\in \mcl A$. 
\end{cor}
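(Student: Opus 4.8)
The plan is to derive a representation of $T$ from the results already established for the non-symmetric multiplicatively norm-preserving setting, and then to use the extra fixed point $\lambda$ to eliminate the conjugation part of that representation.

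First I would check that the hypotheses of Corollary~\ref{semisimplemain} are met. Since $T$ is a group homomorphism of $A^{-1}$ onto $B^{-1}$ we have $T1=1$, and from $TfTg=T(fg)$ together with the assumption $\rr(Th-1)=\rr(h-1)$ for every $h\in A^{-1}$ we obtain $\rr(TfTg-1)=\rr(T(fg)-1)=\rr(fg-1)$ for every pair $f,g\in A^{-1}$. Applying Corollary~\ref{semisimplemain}, $B$ is semisimple, $T$ extends to a map $T_E$ from $\mcl A$ onto $\mcl B$ with $T_EA=B$ and $T_E((\mcl A)^{-1})=(\mcl B)^{-1}$, and there are a homeomorphism $\phi$ from $\Ch(\mcl B)$ onto $\Ch(\mcl A)$ and a clopen subset $K$ of $\Ch(\mcl B)$ with
\[
T_Ef(y)=T1(y)\times
\begin{cases}
f(\phi(y)), & y\in K,\\
\overline{f(\phi(y))}, & y\in\Ch(\mcl B)\setminus K
\end{cases}
\]
for every $f\in\mcl A$; since $T1=1$ the factor $T1(y)$ is identically $1$.

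Next I would show $K=\Ch(\mcl B)$. Since $\lambda\in\mathbb{C}\setminus\mathbb{R}$ we have $\lambda\ne0$, so the constant function $\lambda$ lies in $A^{-1}$, and $T_E\lambda=T\lambda=\lambda$ by hypothesis (as $T_E$ extends $T$). If there were a point $y_0\in\Ch(\mcl B)\setminus K$, then evaluating the displayed formula at $y_0$ with $f$ the constant $\lambda$ would give $\lambda=T_E\lambda(y_0)=\overline{\lambda(\phi(y_0))}=\overline{\lambda}$, contradicting $\lambda\notin\mathbb{R}$. Hence $\Ch(\mcl B)\setminus K=\emptyset$, and consequently $T_Ef(y)=f(\phi(y))$ for every $y\in\Ch(\mcl B)$ and every $f\in\mcl A$.

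Finally I would upgrade this to the asserted complex-algebra isomorphism. Because $\Ch(\mcl A)$ and $\Ch(\mcl B)$ are boundaries for $\mcl A$ and $\mcl B$, the restriction maps $\mcl A\to\mcl A|_{\Ch(\mcl A)}$ and $\mcl B\to\mcl B|_{\Ch(\mcl B)}$ are isometric complex-algebra isomorphisms, and composition with the homeomorphism $\phi$ carries $\mcl A|_{\Ch(\mcl A)}$ complex-algebra-isomorphically onto $\mcl B|_{\Ch(\mcl B)}$, membership of $f\circ\phi$ in $\mcl B|_{\Ch(\mcl B)}$ being guaranteed by $T_E(\mcl A)=\mcl B$ together with the formula $T_Ef=f\circ\phi$ on $\Ch(\mcl B)$. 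Composing these three maps shows $T_E$ is a complex-algebra isomorphism from $\mcl A$ onto $\mcl B$ extending $T$, and $T_EA=B$ is already recorded above, which gives the conclusion. I do not expect a genuine obstacle here: the one point requiring care is that, in contrast with Corollary~\ref{cor3.2} where a possibly nontrivial factor $T1(y)\in\{1,-1\}$ forces one also to exclude $i\mathbb{R}$, here the group-homomorphism hypothesis yields $T1=1$, so the weaker requirement $\lambda\notin\mathbb{R}$ already suffices to kill the complement of $K$.
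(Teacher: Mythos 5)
Your proposal is correct and follows essentially the same route as the paper: the paper invokes Corollary \ref{cor3.3} (itself just Corollary \ref{semisimplemain} applied to the group homomorphism, using $TfTg=T(fg)$ and $T1=1$), then uses the fixed non-real $\lambda$ to show $\Ch(\mcl B)\setminus K=\emptyset$ by the same evaluation $\lambda=\overline{\lambda}$ contradiction, and finishes with the extension/restriction argument of Theorem \ref{main}. Your direct appeal to Corollary \ref{semisimplemain}, and your remark that $T1=1$ is why $\lambda\notin\mathbb{R}$ suffices (versus $\mathbb{R}\cup i\mathbb{R}$ in Corollary \ref{cor3.2}), match the paper's reasoning.
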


\begin{proof}
By Corollary \ref{cor3.3} we see that 
there exist a homeomorphism from $\Ch (\mcl B)$ onto 
$\Ch (\mcl A)$ and a clopen subset $K$ of $\Ch (\mcl B)$ such that 
the equality
\begin{equation*}
Tf(y)=
\begin{cases}
f(\phi (y)), & y\in K \\
\overline{f(\phi (y))}, & y\in \Ch (\mcl B)\setminus K
\end{cases}
\end{equation*}
holds for every $f\in A^{-1}$. We also see that 
$K=\Ch (\mcl B)$. 
Suppose that $y\in \Ch (\mcl B)\setminus K$, then 
we have that 
\[
\lambda=T\lambda(y)=\overline{\lambda (\phi (y))}=
\bar{\lambda},
\]
which is a contradiction since $\lambda 
\in {\mathbb C}\setminus {\mathbb R}$. Thus $K=\Ch (\mcl B)$. 
As in the same way as the proof of Theorem \ref{main} $T$ is 
extended to the desired $T_E$.
\end{proof}

\begin{cor}\label{cor3.5}
Let $A$ 
be a unital semisimple commutative Banach algebra and 
$B$ a unital commutative Banach algebra. 
Suppose that $T$ is a map from $A^{-1}$ onto 
$B^{-1}$ and $\alpha$ is a non-zero complex number 
which satisfies that the equality 
\[
\rr (TfTg-\alpha)=\rr (fg-\alpha)
\]
holds 
for every pair $f$ and $g$ in $A^{-1}$. 
Then $B$ is 
semisimple, and $T$ is extended to a map $T_E$ from 
$\mcl A$ onto $\mcl B$ such that $T_EA=B$, 
and there exist an element $\eta \in B^{-1}$ such that 
$\eta ^{2}=1$, 
a homeomorphism $\phi$ from $\Ch (\mcl B)$ 
onto $\Ch (\mcl A)$ and a clopen subset $K$ of $\Ch (\mcl B)$ 
which satisfies that the equality 
\begin{equation*}
T_Ef(y)=\eta (y) \times 
\begin{cases}
f(\phi (y)), & y\in K \\
\frac{\alpha}{|\alpha |}\overline{f(\phi (y))}, & y\in 
\Ch (\mcl B)\setminus K
\end{cases}
\end{equation*}
holds for every $f\in \mcl A$. 
Furthermore, if $T1=1$, then 
the equality 
\begin{equation*}
T_Ef(y)=
\begin{cases}
f(\phi (y)), & y\in K, \\
\overline{f(\phi (y))}, & y \in \Ch (\mcl B)\setminus K
\end{cases}
\end{equation*}
holds for every $f\in \mcl A$. 
Thus the equality 
\[
\|T_EfT_Eg-\alpha \|_{\infty (M_B)}=\|fg-\alpha \|_{\infty (M_A)}
\]
holds for every pair $f$ and $g$ in $\mcl A$, and 
$T_E((\mcl A)^{-1})=(\mcl B)^{-1}$ holds. 
Furthermore, if $T1=1$ and $\alpha \in {\mathbb C}\setminus {\mathbb R}$ or 
there exists $\lambda \in {\mathbb C}\setminus {\mathbb R}$ such that 
$T\lambda =\lambda$, then the equality 
\[
T_Ef(y)=f(\phi (y)), \quad y\in \Ch (\mcl B)
\]
holds for every $f\in \mcl A$. In this case, $T_E$ is extended to a 
complex-algebra isomorphism from $\mcl A$ onto $\mcl B$ with 
$T_EA=B$. 
\end{cor}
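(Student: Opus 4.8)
The plan is to reduce the entire statement to the already-established case $\alpha=1$, i.e.\ to Corollary \ref{semisimplemain}, by an elementary rescaling. First I would fix a complex number $\beta$ with $\beta^{2}=\alpha$ (so $\beta\neq0$ and $|\beta|^{2}=|\alpha|$) and define $S\colon A^{-1}\to B^{-1}$ by $Sf=\beta^{-1}T(\beta f)$. Since $f\mapsto\beta f$ is a bijection of $A^{-1}$ and $T$ is onto $B^{-1}$, the map $S$ is onto $B^{-1}$. Applying the hypothesis with $\beta f$ and $\beta g$ in place of $f$ and $g$, and using $\beta^{2}=\alpha$ and $\beta f\cdot\beta g=\alpha fg$, one obtains
\[
\rr(SfSg-1)=\frac{1}{|\beta|^{2}}\,\rr(T(\beta f)T(\beta g)-\alpha)
=\frac{1}{|\alpha|}\,\rr(\alpha fg-\alpha)=\rr(fg-1)
\]
for every $f,g\in A^{-1}$, so $S$ satisfies the hypotheses of Corollary \ref{semisimplemain}.

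By Corollary \ref{semisimplemain}, $B$ is semisimple, $(S1)^{2}=1$, $S$ extends to a map $S_E$ from $\mcl A$ onto $\mcl B$ with $S_E A=B$ and $S_E((\mcl A)^{-1})=(\mcl B)^{-1}$, and there are a homeomorphism $\phi$ from $\Ch(\mcl B)$ onto $\Ch(\mcl A)$ and a clopen $K\subset\Ch(\mcl B)$ with $S_E h(y)=S1(y)h(\phi(y))$ on $K$ and $S_E h(y)=S1(y)\overline{h(\phi(y))}$ on $\Ch(\mcl B)\setminus K$. I would then put $\eta=S1=\beta^{-1}T(\beta)\in B^{-1}$, so $\eta^{2}=1$, and define $T_E h=\beta\,S_E(\beta^{-1}h)$ for $h\in\mcl A$. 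Since $Sf=\beta^{-1}T(\beta f)$ rewrites as $Th=\beta S(\beta^{-1}h)$ on $A^{-1}$, the map $T_E$ extends $T$; it maps $\mcl A$ onto $\mcl B$ with $T_E A=\beta S_E(\beta^{-1}A)=\beta B=B$, and it carries $(\mcl A)^{-1}$ onto $(\mcl B)^{-1}$. Substituting the formula for $S_E$ and using $\beta/\bar\beta=\beta^{2}/|\beta|^{2}=\alpha/|\alpha|$ gives exactly
\[
T_E f(y)=\eta(y)\times
\begin{cases}
f(\phi(y)), & y\in K,\\
\dfrac{\alpha}{|\alpha|}\,\overline{f(\phi(y))}, & y\in\Ch(\mcl B)\setminus K,
\end{cases}
\]
for every $f\in\mcl A$. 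The norm identity I would check pointwise on the boundary $\Ch(\mcl B)$: since $\eta^{2}=1$, on $K$ one has $T_E fT_E g(y)-\alpha=(fg)(\phi(y))-\alpha$, while on $\Ch(\mcl B)\setminus K$ one has $T_E fT_E g(y)-\alpha=\frac{\alpha^{2}}{|\alpha|^{2}}(\overline{(fg)(\phi(y))}-\bar\alpha)$, whose modulus equals $|(fg)(\phi(y))-\alpha|$; taking suprema over $\Ch(\mcl B)$ and transporting along $\phi$ yields $\|T_E fT_E g-\alpha\|_{\infty(M_B)}=\|fg-\alpha\|_{\infty(M_A)}$.

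Next I would treat the refinement under $T1=1$. Evaluating the displayed formula at $f=1$ and using $T_E1=T1=1$ forces $\eta=1$ on $K$ and $\eta=|\alpha|/\alpha$ on $\Ch(\mcl B)\setminus K$; since $\eta^{2}=1$ this leaves two possibilities: either $\Ch(\mcl B)\setminus K=\emptyset$, or $\alpha$ is real and $\eta$ equals the sign of $\alpha$ off $K$, in which case $\eta(y)\,\alpha/|\alpha|=1$ there. In both cases the formula collapses to $T_E f(y)=f(\phi(y))$ on $K$ and $T_E f(y)=\overline{f(\phi(y))}$ on $\Ch(\mcl B)\setminus K$, which is the refined form asserted. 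Finally, suppose moreover that $\alpha\notin\mathbb R$, or that $T\lambda=\lambda$ for some $\lambda\in\mathbb C\setminus\mathbb R$. If $\alpha\notin\mathbb R$, then the first alternative above gives $K=\Ch(\mcl B)$ directly; if instead $T\lambda=\lambda$ with $\lambda\notin\mathbb R$ and $\Ch(\mcl B)\setminus K$ were nonempty, evaluating the refined formula at the constant $\lambda$ at a point of $\Ch(\mcl B)\setminus K$ would give $\lambda=T_E\lambda(y)=\bar\lambda$, a contradiction; so again $K=\Ch(\mcl B)$. Then $T_E f(y)=f(\phi(y))$ for all $y\in\Ch(\mcl B)$, and since $\Ch(\mcl B)$ is a boundary for $\mcl B$ and $\phi$ is a homeomorphism, $T_E$ is a complex-algebra isomorphism of $\mcl A$ onto $\mcl B$ whose restriction is an isomorphism of $A$ onto $B$.

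I do not expect a genuine obstacle here: all the analytic content is already carried by Theorem \ref{main} and Corollary \ref{semisimplemain}, and what remains is bookkeeping with the scalar $\beta$ plus a short case split on whether $\alpha$ is real. The two places calling for a little care are the verification that $T_E$ is well defined on all of $\mcl A$ (inherited verbatim from $S_E$) and the check that, when $T1=1$ and $\alpha<0$, the sign picked up by $\eta$ exactly cancels the factor $\alpha/|\alpha|=-1$, so that the two branches of the formula really do merge into the clean conjugation formula.
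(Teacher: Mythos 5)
Your proposal is correct and follows essentially the same route as the paper: rescale by a square root $\beta$ of $\alpha$, reduce to Corollary \ref{semisimplemain}, set $\eta=S1$ and $T_Ef=\beta S_E(f/\beta)$, and use $\beta/\bar\beta=\alpha/|\alpha|$. The only (harmless) deviation is at the very end: when $\alpha\notin\mathbb{R}$ you deduce $K=\Ch(\mcl B)$ directly from the constraint $\eta\cdot\alpha/|\alpha|=1$ off $K$, whereas the paper first derives $T\alpha=\alpha$ from $\rr(T\alpha T1-\alpha)=0$ and then substitutes $f=\alpha$; both are valid.
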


\begin{proof}
Let $\beta$ be a complex number with $\beta^2=\alpha$. Put a function 
$T_{\beta}$ from $A^{-1}$ into $B^{-1}$ by 
\[
T_{\beta}f=\frac{1}{\beta}
T({\beta}f) \quad f\in A^{-1}.
\]
Then by a simple calculation $T_{\beta}A^{-1}=B^{-1}$.
Since $T_{\beta}fT_{\beta}g=\frac{1}{\alpha}T(\beta f)T(\beta g)$, we have 
that 
\begin{multline*}
\rr (T_{\beta}fT_{\beta}g-1)=\frac{1}{|\alpha|}\rr (T(\beta f)T(\beta g)-
\alpha ) \\
= \frac{1}{|\alpha|}\rr (\beta f\beta g- 
\alpha )=\rr (fg-1).
\end{multline*}
Then by Corollary \ref{semisimplemain}, $B$ is semisimple and 
$(T_{\beta}1)^2=1$. 
We also see by Corollary \ref{semisimplemain} that $T_{\beta}$ is 
extended to a function $(T_{\beta})_E$ from $\mcl A$ onto 
$\mcl B$ and that there exists a 
homeomorphism $\phi$ from $\Ch (\mcl B)$ onto $\Ch (\mcl A)$ and a clopen 
subset of $\Ch (\mcl B)$ such that the equation
\begin{equation*}
(T_{\beta})_Ef(y)=T_{\beta}1(y) \times 
\begin{cases}
f(\phi (y)), & y\in K, \\
\overline{f(\phi (y))}, & y\in \Ch (\mcl B)\setminus K
\end{cases}
\end{equation*}
holds for every $f\in \mcl A$. 
Put $\eta = T_{\beta}1$ and put 
\[
T_Ef=\beta (T_{\beta})_E(\frac{f}{\beta})
\]
for $f\in \mcl A$. By a simple calculation we see that $T_E$ is an 
extension of $T$ which maps from $\mcl A$ onto $\mcl B$ and we have that
\begin{equation*}
T_Ef(y)= \eta (y) \times 
\begin{cases}
f(\phi (y)), & y\in K, \\
\frac{\alpha}{|\alpha|}\overline{f(\phi (y))}, & y \in \Ch (\mcl B)
\setminus K
\end{cases}
\end{equation*}
since $\frac{\beta}{\bar \beta}=\frac{\alpha}{|\alpha|}$.

Suppose that $T1=1$. Then by the above equation we have that 
$1=\eta $ on $K$ and $1=\frac{\alpha}{|\alpha|}\eta $ on 
$\Ch (\mcl B)\setminus K$. Thus we see that the equality 
\begin{equation}\label{te}
T_Ef(y)= 
\begin{cases}
f(\phi (y)), & y\in K, \\
\overline{f(\phi (y))}, & y \in \Ch (\mcl B)\setminus K
\end{cases}
\end{equation}
holds for every $f\in \mcl A$. 

Furthermore suppose that $T1=1$, and $\alpha \in {\mathbb C}\setminus
{\mathbb R}$ or $T\lambda =\lambda$ for some 
$\lambda \in {\mathbb C}\setminus {\mathbb R}$. 
First we consider the case where 
$\alpha \in {\mathbb C}\setminus
{\mathbb R}$.
Since $B$ is semisimple and $T1=1$ we have that
\[
\rr (T\alpha -\alpha)=\rr (T\alpha T1-\alpha)=\rr (\alpha 1-\alpha)=0,
\]
so $T\alpha=\alpha$ on $\Ch (\mcl B)$. On the other hand 
by putting $f=\alpha$ in the above equation (\ref{te}) we have that
\begin{equation*}
T_E\alpha (y) =
\begin{cases}
\alpha, & y \in K, \\
|\alpha|, & y\in \Ch (\mcl B)\setminus K.
\end{cases}
\end{equation*}
Since $\alpha \in {\mathbb C}\setminus {\mathbb R}$ we see that 
$K=\Ch (\mcl B)$ and so the equality 
\[
T_Ef(y)=f(\phi (y)), \quad y \in \Ch (\mcl B)
\]
holds for every $f\in \mcl A$. 
Next we consider the case where 
$T\lambda =\lambda$ for some 
$\lambda \in {\mathbb C}\setminus {\mathbb R}$. 
Then by the above equation (\ref{te}) we have that 
\begin{equation*}
T\lambda (y)=T_E\lambda (y) =
\begin{cases}
\lambda, & y\in K, \\
\bar \lambda, & y\in \Ch (\mcl B)\setminus K. 
\end{cases}
\end{equation*}
It follows that $K=\Ch (\mcl B)$ since $T\lambda =\lambda$. So we have that 
\[
T_Ef(y)=f(\phi (y)), \quad y \in \Ch (\mcl B)
\]
holds for every $f\in \mcl A$.
\end{proof}

\begin{cor}\label{cor3.6}
Let $A$ be a unital semisimple commutative Banach algebra and 
$B$ a unital commutative Banach algebra. 
Suppose that $T$ is a group homomorphism 
 from $A^{-1}$ onto 
$B^{-1}$ which satisfies that there exists a nonzero 
complex number $\alpha$ such that 
the equality 
\[
\rr (Tf-\alpha)=\rr (f-\alpha)
\]
holds 
for every $f \in A^{-1}$. 
Then $B$ is semisimple, $T$ is extended to a real-algebra isomorphism 
$T_E$ from $\mcl A$ onto $\mcl B$ with $T_EA=B$ and 
$T_E((\mcl A)^{-1})=(\mcl B)^{-1}$,  and there exists a homeomorphism $\phi$ 
from $\Ch (\mcl B)$ onto $\Ch (\mcl A)$ and a clopen subset $K$ of 
$\Ch (\mcl B)$ such that 
\begin{equation*}
T_Ef(y)= 
\begin{cases}
f(\phi (y)), & y\in K, \\
\overline{f(\phi (y))}, & y\in \Ch (\mcl B)\setminus K
\end{cases}
\end{equation*}
holds for every $f\in \mcl A$. 
Furthermore if $\alpha \in {\mathbb C}\setminus {\mathbb R}$ or 
there exists a $\lambda \in {\mathbb C}\setminus {\mathbb R}$ 
such that $T\lambda = \lambda$, then 
\[
T_Ef(y)=f(\phi (y)), \quad y\in \Ch (\mcl B)
\]
holds for every $f\in \mcl A$; In this case $T_E$ is a 
complex-algebra isomorphism. 
\end{cor}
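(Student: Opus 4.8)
The plan is to derive everything from Corollary \ref{cor3.5}. First observe that $T1=1$: for every $g\in A^{-1}$ we have $T1\cdot Tg=T(1\cdot g)=Tg$, and since $Tg\in B^{-1}$ is invertible this forces $T1=1$. Next, for arbitrary $f,g\in A^{-1}$ the product $fg$ again lies in $A^{-1}$, so applying the hypothesis to $fg$ and using multiplicativity of $T$ gives
\[
\rr(TfTg-\alpha)=\rr(T(fg)-\alpha)=\rr(fg-\alpha)
\]
for every pair $f,g\in A^{-1}$. Thus $T$ satisfies the hypotheses of Corollary \ref{cor3.5} with this same $\alpha$.

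Now I would invoke Corollary \ref{cor3.5}. Since $T1=1$, its ``$T1=1$'' clause yields that $B$ is semisimple, that $T$ extends to a surjection $T_E$ from $\mcl A$ onto $\mcl B$ with $T_EA=B$ and $T_E((\mcl A)^{-1})=(\mcl B)^{-1}$, and that there are a homeomorphism $\phi$ from $\Ch(\mcl B)$ onto $\Ch(\mcl A)$ and a clopen subset $K$ of $\Ch(\mcl B)$ with
\[
T_Ef(y)=\begin{cases} f(\phi(y)), & y\in K,\\ \overline{f(\phi(y))}, & y\in \Ch(\mcl B)\setminus K\end{cases}
\]
for every $f\in\mcl A$. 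Because $\Ch(\mcl B)$ is a boundary for $\mcl B$, the restriction map identifies $\mcl B$ with $\mcl B|_{\Ch(\mcl B)}$ isometrically, exactly as in the proof of Theorem \ref{main}; and the map built from $\phi$, complex conjugation, and the split along the clopen decomposition $\Ch(\mcl B)=K\cup(\Ch(\mcl B)\setminus K)$ is additive, multiplicative and $\mathbb R$-homogeneous. Hence this formula exhibits $T_E$ as a real-algebra isomorphism from $\mcl A$ onto $\mcl B$, and restricting, $A$ is real-algebraically isomorphic to $B$ via $T_E$.

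For the final assertion, suppose $\alpha\in\mathbb C\setminus\mathbb R$ or $T\lambda=\lambda$ for some $\lambda\in\mathbb C\setminus\mathbb R$. Since $T1=1$, the last clause of Corollary \ref{cor3.5} applies verbatim and gives $T_Ef(y)=f(\phi(y))$ for every $y\in\Ch(\mcl B)$ and $f\in\mcl A$; equivalently $K=\Ch(\mcl B)$. In that case the displayed formula has no conjugation, so the same bookkeeping as above shows that $T_E$ is in fact $\mathbb C$-homogeneous, i.e.\ a complex-algebra isomorphism from $\mcl A$ onto $\mcl B$ with $T_EA=B$.

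I do not expect a genuine obstacle: the whole statement is a specialization of Corollary \ref{cor3.5} once one records the two elementary facts that a surjective group homomorphism satisfies $T1=1$ and that the one-variable hypothesis $\rr(Tf-\alpha)=\rr(f-\alpha)$ upgrades, via multiplicativity, to the symmetric two-variable hypothesis $\rr(TfTg-\alpha)=\rr(fg-\alpha)$. The only point that needs a sentence of care is that the explicit representation of $T_E$ really is a real- (resp.\ complex-) algebra isomorphism onto $\mcl B$, but this is precisely the identification already used in the proofs of Corollary \ref{cor3.2} and Corollary \ref{cor3.3}.
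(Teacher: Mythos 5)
Your proposal is correct and follows essentially the same route as the paper: use that a surjective group homomorphism satisfies $T1=1$ and $TfTg=T(fg)$ to upgrade the one-variable hypothesis to $\rr(TfTg-\alpha)=\rr(fg-\alpha)$, then invoke Corollary \ref{cor3.5} (with its $T1=1$ clauses) for the representation, the real-algebra isomorphism property, and the final complex-isomorphism assertion. The extra sentence you include verifying that the explicit formula defines a real- (resp.\ complex-) algebra isomorphism is a harmless elaboration of what the paper leaves implicit.
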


\begin{proof}
Since $T$ is a group homomorphism we see that
\[
\rr (TfTg-\alpha)= \rr(fg-\alpha)
\]
holds for every $f$ and $g$ in $A^{-1}$. 
Then $B$ is semisimple by Corollary \ref{cor3.5}. 
We also see by 
Corollary 
\ref{cor3.5} that 
$T$ is extended to a real-algebra isomorphism 
$T_E$ from $\mcl A$ onto $\mcl B$ with $T_EA=B$, and 
there exist a homeomorphism $\phi$ from 
$\Ch (\mcl B)$ onto $\Ch (\mcl A)$ and a clopen subset 
$K$ of $\Ch (\mcl B)$ which satisfies that the equality 
\begin{equation*}
T_Ef(y)=
\begin{cases}
f(\phi (y)), &y\in K, \\
\overline{f(\phi (y))},& y\in \Ch (\mcl B)\setminus K
\end{cases}
\end{equation*}
holds for every $f\in \mcl A$. 
Note that $T1=1$ since $T$ is a group homomorphism from 
$A^{-1}$ onto $B^{-1}$. 

Suppose that $\alpha \in {\mathbb C}\setminus {\mathbb R}$ or 
there exists a $\lambda \in {\mathbb C}\setminus 
{\mathbb R}$ such that $T\lambda=\lambda$. 
Then by Corollary \ref{cor3.5} 
we see that the equality
\[
T_Ef(y)=f(\phi (y)), \quad y\in \Ch (\mcl B)
\]
holds for every $f\in \mcl A$. 
\end{proof}

\section{Non-symmetric multiplicatively spectrum and peripheral 
spectrum-preserving maps between invertible groups}
In this section we consider non-symmetric multiplicatively (peripheral) 
spectrum-preserving maps. We say that 
a map $T$ from the invertible group $A^{-1}$ 
of a unital commutative Banach algebra $A$ into the invertible group
$B^{-1}$ of a unital commutative Banach algebra $B$ is non-symmetric 
multiplicatively (resp. peripheral) spectrum-preserving 
if there exists a nonzero complex 
number $\alpha$ such that 
\[
\sigma (TfTg-\alpha )=\sigma (fg-\alpha )
\]
\[
(\text{resp.} \sigma_{\pi}(TfTg-\alpha )= \sigma _{\pi}(fg-\alpha))
\]
holds for every pair $f$ and $g$ in $A^{-1}$. In this section we show, 
under some additional assumption, that non-symmetric multiplicatively 
peripheral spectrum-preserving maps from $A^{-1}$ onto $B^{-1}$ are 
extended to algebra isomorphisms from $A$ onto $B$. 
\begin{cor}\label{cor3.7}
Let $A$ be a unital semisimple commutative Banach algebra and $B$ a unital 
commutative Banach algebra. Suppose that $T$ is a map from $A^{-1}$ onto 
$B^{-1}$ which satisfies that 
there exists a nonzero complex number $\alpha$ such that  
\[
\sigma_{\pi}(TfTg-\alpha) \cap
\sigma_{\pi}(fg-\alpha) \ne \emptyset
\]
holds for every pair $f$ and $g$ in $A^{-1}$.
Then $B$ is semisimple, and $T$ is extended to a map $T_E$ from $\mcl A$ onto 
$\mcl B$ with $T_EA=B$,  and 
there exists  a homeomorphism 
$\phi$ from $\Ch (\mcl B)$ onto $\Ch (\mcl A)$ such that the equality 
\[
T_Ef(y)=T1(y)f(\phi (y)), \quad y\in \Ch (\mcl B)
\]
holds for every $f\in \mcl A$. In particular, $\frac{T}{T1}$ is 
extended to a 
complex-algebra isomorphism $\frac{T_E}{T1}$ from $\mcl A$ onto $\mcl B$ 
with $\frac{T_E}{T1}(A)=B$. 
\end{cor}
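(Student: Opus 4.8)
The plan is to deduce the statement from Corollary \ref{cor3.5} and then to rule out the complex-conjugation alternative appearing there.

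I would begin with the elementary observation that if $\sigma_{\pi}(P)\cap\sigma_{\pi}(Q)\neq\emptyset$ then $\rr(P)=\rr(Q)$, since any $\lambda$ in the intersection satisfies $|\lambda|=\rr(P)$ and $|\lambda|=\rr(Q)$. Applying this with $P=TfTg-\alpha$ and $Q=fg-\alpha$ shows that $\rr(TfTg-\alpha)=\rr(fg-\alpha)$ for all $f,g\in A^{-1}$, so the hypothesis of Corollary \ref{cor3.5} holds. That corollary then gives: $B$ is semisimple, $T$ extends to a surjection $T_E\colon\mcl A\to\mcl B$ with $T_EA=B$ and $T_E((\mcl A)^{-1})=(\mcl B)^{-1}$, and there exist $\eta\in B^{-1}$ with $\eta^{2}=1$, a homeomorphism $\phi\colon\Ch(\mcl B)\to\Ch(\mcl A)$, and a clopen subset $K$ of $\Ch(\mcl B)$ with
\[
T_Ef(y)=\eta(y)\times
\begin{cases}
f(\phi(y)), & y\in K,\\
\dfrac{\alpha}{|\alpha|}\,\overline{f(\phi(y))}, & y\in\Ch(\mcl B)\setminus K
\end{cases}
\]
for every $f\in\mcl A$. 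Putting $f=1$ gives $T1=\eta$ on $K$ and $T1=\frac{\alpha}{|\alpha|}\eta$ on $\Ch(\mcl B)\setminus K$, so the asserted formula $T_Ef(y)=T1(y)f(\phi(y))$ follows once we prove $K=\Ch(\mcl B)$, and everything else in the statement is then routine.

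The heart of the argument is thus $K=\Ch(\mcl B)$, and here — unlike in the spectral-radius version — one must use the full peripheral condition, since the conjugation $f\mapsto\bar f$ satisfies the radius identity but not the peripheral one. Suppose $\Ch(\mcl B)\setminus K\neq\emptyset$. The clopen set $K$ is cut out on $\Ch(\mcl B)$ by an element $\theta\in\mcl B$ with $\theta^{2}=-1$ produced in the course of the proof of Corollary \ref{cor3.5}; since $\Ch(\mcl B)$ is a boundary, $\theta^{2}=-1$ on all of $M_B$, so $\{\,y\in M_B:\theta(y)=i\,\}$ is clopen and meets $\Ch(\mcl B)$ exactly in $K$. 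Transporting this clopen set to $M_A$ through the homeomorphism of maximal ideal spaces underlying the isomorphisms of Corollary \ref{cor3.5} and applying the Shilov idempotent theorem to $A$, one obtains an idempotent $e\in A$ whose Gelfand transform is the characteristic function of a clopen set $\tilde L\subseteq M_A$ with $\tilde L\cap\Ch(\mcl A)=\phi(K)$. Fix a small $\varepsilon>0$ and set $w=\alpha\bigl(1+i\varepsilon(1-e)\bigr)\in A^{-1}$, whose inverse is $\frac1\alpha\bigl(e+(1+i\varepsilon)^{-1}(1-e)\bigr)$. Then $\widehat w-\alpha$ vanishes on $\tilde L$ and equals $i\varepsilon\alpha$ off $\tilde L$, so $\sigma_{\pi}(w\cdot 1-\alpha)=\{i\varepsilon\alpha\}$. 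On the other side, since $\eta^{2}=1$ the $\eta$-factors cancel in $TwT1=T_Ew\cdot T_E1$, and using $\widehat w(\phi(y))=\alpha$ for $y\in K$ and $\widehat w(\phi(y))=\alpha(1+i\varepsilon)$ for $y\in\Ch(\mcl B)\setminus K$ one computes, on $\Ch(\mcl B)$,
\[
TwT1-\alpha=
\begin{cases}
0, & y\in K,\\
\dfrac{\alpha^{2}}{|\alpha|^{2}}\,\overline{\alpha(1+i\varepsilon)}-\alpha=-\,i\varepsilon\alpha, & y\in\Ch(\mcl B)\setminus K,
\end{cases}
\]
so $\sigma_{\pi}(TwT1-\alpha)=\{-i\varepsilon\alpha\}$ because $\Ch(\mcl B)\setminus K\neq\emptyset$. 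Since $i\varepsilon\alpha\neq-i\varepsilon\alpha$, the sets $\sigma_{\pi}(TwT1-\alpha)$ and $\sigma_{\pi}(w\cdot1-\alpha)$ are disjoint, contradicting the hypothesis. Hence $K=\Ch(\mcl B)$.

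With $K=\Ch(\mcl B)$ the representation reads $T_Ef(y)=\eta(y)f(\phi(y))=T1(y)f(\phi(y))$ for all $y\in\Ch(\mcl B)$ and $f\in\mcl A$. Since $\Ch(\mcl A)$ and $\Ch(\mcl B)$ are boundaries, the restriction maps are isometric algebra isomorphisms of $\mcl A$ and $\mcl B$ onto $\mcl A|_{\Ch(\mcl A)}$ and $\mcl B|_{\Ch(\mcl B)}$; composing with the isomorphism $g\mapsto g\circ\phi$ shows that $\frac{T_E}{T1}$ is a complex-algebra isomorphism of $\mcl A$ onto $\mcl B$ extending $\frac{T}{T1}$, and $\frac{T_E}{T1}(A)=B$ since $T_EA=B$ and $T1\in B^{-1}$. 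I expect the one genuinely delicate point to be the construction of the test element $w$ in the middle paragraph: transporting the clopen structure of $K$ down to $M_A$ (so that $e$, and hence $w^{\pm1}$, really lie in $A$ and not merely in $\mcl A$) and passing the peripheral-spectrum computation from the Choquet boundary to $M_B$ via the uniqueness of functions in a uniform algebra determined on a boundary. This can be handled directly, or by first upgrading the hypothesis from $A^{-1}$ to $(\mcl A)^{-1}$ — the relation $\sigma_{\pi}(T_EfT_Eg-\alpha)\cap\sigma_{\pi}(fg-\alpha)\neq\emptyset$ survives the uniform limits through which $T_E$ is built, by upper semicontinuity of the spectrum — after which $w$ may be taken in $(\mcl A)^{-1}$ and a peaking-function variant of the computation above goes through.
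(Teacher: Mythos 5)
Your proof is correct in substance and follows the same overall strategy as the paper: pass from the peripheral-intersection hypothesis to the spectral-radius identity, invoke Corollary \ref{cor3.5}, and then kill the conjugate part $\Ch (\mcl B)\setminus K$ by exhibiting a test pair that violates the hypothesis. The difference lies in how the witness is built. The paper first reduces to $\alpha=1$ (via $T_{\beta}f=\frac{1}{\beta}T(\beta f)$, $\beta^2=\alpha$) and constructs its witness on the $B$ side: an invertible $U$ with $\mathrm{Im}\,U>0$ on $\Ch (\mcl B)$, $U(y_0)=i$ and $|U|<\frac13$ on $K$, whose preimage $u$ forces $\sigma_{\pi}(TuT1-1)$ and $\sigma_{\pi}(u-1)$ into opposite open half-planes. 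You keep $\alpha$ general and build the witness on the $A$ side from a Shilov idempotent, so that the two peripheral spectra become the exact singletons $\{i\varepsilon\alpha\}$ and $\{-i\varepsilon\alpha\}$; since $\eta^2=1$ and the $\frac{\alpha}{|\alpha|}$ factors cancel in your computation, you avoid the reduction to $\alpha=1$ altogether. This is a legitimate and arguably cleaner variant. The one step you must tighten is the ``transport'' of $K$ to a clopen subset of $M_A$: a real-algebra isomorphism does not directly induce a homeomorphism of maximal ideal spaces, and pulling back the element $\theta$ that cuts out $K$ is useless --- since $\frac{T_E}{T1}(i)$ equals $i$ on $K$ and $-i$ off $K$, that $\theta$ is just the image of the constant $i$, so its preimage is constant. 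The correct one-line fix goes the other way: take $u\in \mcl A$ with $\frac{T_E}{T1}(u)=i$ (the constant function in $\mcl B$); injectivity gives $u^2=\bigl(\frac{T_E}{T1}\bigr)^{-1}(i^2)=-1$, so $u(M_A)\subset\{i,-i\}$ and $\tilde L:=u^{-1}(i)$ is clopen in $M_A=M_{\mcl A}$, while the representation formula gives $u(\phi(y))=i$ for $y\in K$ and $u(\phi(y))=-i$ for $y\in \Ch (\mcl B)\setminus K$, whence $\tilde L\cap \Ch (\mcl A)=\phi(K)$; Shilov's idempotent theorem applied to $A$ itself (legitimate because $M_A=M_{\mcl A}$) then produces $e\in A$ and hence $w\in A^{-1}$. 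With that repair your argument is complete; note that the paper's mirror-image construction on the $B$ side sidesteps this issue entirely, because there the clopen decomposition of $M_B$ is directly available from $\bigl(T_Ei/T1\bigr)^2=-1$.
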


\begin{proof}
First we consider the case where $\alpha=1$. We have that the equality
\[
\rr (TfTg-1)=\rr (fg-1), \quad f,g\in A^{-1}
\]
holds since 
\[
\sigma_{\pi}(TfTg-1)\cap \sigma_{\pi}(fg-1)\ne
\emptyset
\]
holds for 
every pair $f$ and $g$ in $A^{-1}$. Thus by Corollary \ref{semisimplemain}
we see that $B$ is semisimple,  and 
$T$ is extended to the map $T_E$ from $\mcl A$ onto $\mcl B$, and 
there exist a homeomorphism 
$\phi$ from $\Ch (\mcl B)$ onto $\Ch (\mcl A)$ and a clopen subset 
$K$ of $\Ch (\mcl B)$ such that the equality
\begin{equation*}
T_Ef(y) = T1(y)\times 
\begin{cases}
f(\phi (y)), \quad& y\in K, \\
\overline{f(\phi (y))}, \quad &y \in \Ch(\mcl B)\setminus K
\end{cases}
\end{equation*}
holds for every $f\in \mcl A$, where $(T1)^2=1$. 
Let $\lambda \in \sigma_{\pi}(f)$ for an $f\in A^{-1}$ (resp. $B^{-1}$). 
Then there exists an $x\in M_A$ (resp. $M_B$) with $f(x)=\lambda$. 
Since $|\lambda|=\|f\|_{\infty (X)}$ (resp. $|\lambda|=\|f\|_{\infty (Y)}$), 
we have that $f^{-1}(\lambda)$ is a peak set for $\mcl A$ (resp. $\mcl B$). 
Thus there exists an $x_0\in \Ch (\mcl A)\cap f^{-1}(\lambda)$ 
(resp. $x_0\in \Ch (\mcl B)\cap f^{-1}(\lambda)$) by Corollary 
2.4.6 in \cite{B}. It follows that 
$\lambda =f(x_0)\in f(\Ch (\mcl A))$ (resp. 
$\lambda =f(x_0)\in f(\Ch (\mcl B))$), so that 
$\sigma_{\pi}(f) \subset f(\Ch (\mcl A))$ (resp. 
$\sigma_{\pi}(f) \subset f(\Ch (\mcl B))$) holds for every 
$f\in A^{-1}$ (resp. $B^{-1}$).

We show that $K=\Ch (\mcl B)$. Suppose not; There exists a $y_0 \in 
\Ch (\mcl B)\setminus K$. By the definition of $K$ we have that 
$\bar K \cap \overline{\Ch (\mcl B)
\setminus K}=\emptyset$, 
where $\bar \cdot$ denotes the closure in $M_B$. Thus there exists a 
$U\in B^{-1}$ such that $U(y_0)=i$, $|U|<\frac13$ on $K$, and 
$\mathrm{Im}U>0$ on $\Ch (\mcl B)$. Then there exists a $u\in A^{-1}$ with 
$Tu=T_Eu=\frac{U}{T1}$ since $TA^{-1}=B^{-1}$, so 
\begin{equation*}
U(y)=Tu(y)T1(y)=
\begin{cases}
u\circ \phi (y), & y\in K \\
\overline{u\circ \phi (y)}, & y\in \Ch (\mcl B)\setminus K
\end{cases}
\end{equation*}
since $(T1)^2=1$. Thus 
\begin{multline*}
\sigma_{\pi}(TuT1-1)=\sigma_{\pi}(U-1) \\
\subset U(\Ch (\mcl B))-1
\subset
\{z\in {\mathbb C}:\mathrm{Im}z>0\}.
\end{multline*}
Since 
\[
i=U(y_0)=
\overline{u\circ \phi (y_0)}, 
\]
we have $u(\phi (y_0))=-i$. Since $U=\overline{u\circ \phi}$ on 
$\Ch (\mcl B)\setminus K$, we have
$\mathrm{Im}u<0$ on $\phi (\Ch (\mcl B)\setminus K)$. 
Since $U=u\circ \phi$ on $K$, we have that 
$|u|<\frac13$ on $\phi (K)$. 
Since $(u-1)(\phi (y_0))=-i-1$, we have 
$$\sigma_{\pi}(u-1)\subset  \{z\in {\mathbb C}:|z|\ge \sqrt 2\}.$$
Thus $(u-1)(\phi (K))\cap \sigma_{\pi}(u-1)=\emptyset$. 
It follows that
\[
\sigma_{\pi}(u-1)\subset \{z\in {\mathbb  C}: \mathrm{Im}z<0\}.
\]
So we see that
\[
\sigma_{\pi}(TuT1-1)\cap \sigma_{\pi}(u-1)=\emptyset,
\]
which is a contradiction proving that $K=\Ch (\mcl B)$. 
It follows that the equality
\[
T_Ef(y)=T1(y)f(\phi (y))
\]
holds for every 
$f\in \mcl A$ and $y\in \Ch (\mcl B)$.

Finally we consider the general case for $\alpha$. 
Let $\beta$ be a complex number with $\beta ^2=\alpha$. 
Put a function $T_{\beta}$ from $A^{-1}$ onto $B^{-1}$ by 
$T_{\beta}f=\frac{1}{\beta}T({\beta}f)$ for $f\in A^{-1}$. 
Then $T_{\beta}$ is well-defined and we see by a simple calculation that 
$T_{\beta}A^{-1}=B^{-1}$. Then we see that the equalities 
\begin{multline*}
\sigma_{\pi}(T_{\beta}fT_{\beta}g-1) = 
\sigma_{\pi}(\frac{1}{\alpha}T(\beta f)T(\beta g)-1) \\
= 
\frac{1}{\alpha}\sigma_{\pi}(T(\beta f)T(\beta g)-\alpha)
=\sigma_{\pi}(fg-1)
\end{multline*}
hold for every pair $f$ and $g$ in $A^{-1}$. Thus by the first part of the 
proof we see that $B$ is semisimple, 
and $T_{\beta}$ is extended to a map $(T_{\beta})_E$ from
$\mcl A$ onto $\mcl B$, and there exists a homeomorphism 
$\phi$ from $\Ch (\mcl B)$ onto $\Ch (\mcl A)$ such that the equality 
\[
(T_{\beta})_Ef(y)=T_{\beta}1(y)f(\phi (y)), \quad y\in \Ch (\mcl B)
\]
holds for every $f\in \mcl A$. Put  $T_E:\mcl A\to \mcl B$ by 
$T_Ef=\beta (T_{\beta})_E(\frac{f}{\beta})$ for $f\in \mcl A$. Then 
since we see that the equality 
\[
T_Ef(y)={\beta}(T_{\beta})_E(\frac{f}{\beta})(y)=
T_{\beta}1 (y) f(\phi (y)),  \quad y\in \Ch (\mcl B)
\]
holds for every $f\in \mcl A$. 
Then we have 
\[
T1(y)=T_E1(y)=T_{\beta}1 (y),\quad y\in \Ch (\mcl B)
\]
holds. Thus we see that $T_{\beta}1 =T1$ and the conclusion holds. 
\end{proof}

\begin{cor}\label{cor3.8}
Let $A$ be a unital semisimple commutative Banach algebra and $B$ a unital 
commutative Banach algebra. Suppose that $T$ is a group homomorphism 
from $A^{-1}$ onto 
$B^{-1}$ which satisfies that 
there exists a non-zero complex number $\alpha$ such that 
\[
\sigma_{\pi}(Tf-\alpha) \cap
\sigma_{\pi}(f-\alpha) \ne \emptyset
\]
holds for every $f$ in $A^{-1}$.
Then $B$ is semisimple, and $T$ is extended to a complex-algebra 
isomorphism $T_E$ from $\mcl A$ onto $\mcl B$ with $T_EA=B$,  and 
there exists a homeomorphism 
$\phi$ from $\Ch (\mcl B)$ onto $\Ch (\mcl A)$ such that the equality 
\[
T_Ef(y)=f(\phi (y)), \quad y\in \Ch (\mcl B)
\]
holds for every $f\in \mcl A$. 
\end{cor}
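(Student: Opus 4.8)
The plan is to reduce the statement directly to Corollary \ref{cor3.7}. Since $T$ is a group homomorphism from $A^{-1}$ onto $B^{-1}$, for every pair $f,g\in A^{-1}$ we have $fg\in A^{-1}$ and $T(fg)=TfTg$, so applying the hypothesis to the single element $fg$ gives
\[
\sigma_{\pi}(TfTg-\alpha)\cap\sigma_{\pi}(fg-\alpha)
=\sigma_{\pi}(T(fg)-\alpha)\cap\sigma_{\pi}(fg-\alpha)\ne\emptyset
\]
for every pair $f,g\in A^{-1}$. Thus the hypotheses of Corollary \ref{cor3.7} are met. That corollary then yields that $B$ is semisimple, that $T$ extends to a map $T_E$ from $\mcl A$ onto $\mcl B$ with $T_EA=B$, and that there is a homeomorphism $\phi$ from $\Ch(\mcl B)$ onto $\Ch(\mcl A)$ with $T_Ef(y)=T1(y)f(\phi(y))$ for every $f\in\mcl A$ and $y\in\Ch(\mcl B)$; moreover $\frac{T_E}{T1}$ is a complex-algebra isomorphism from $\mcl A$ onto $\mcl B$ with $\frac{T_E}{T1}(A)=B$.

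Next I would observe that $T1=1$: since $T$ is a group homomorphism, $T1=T(1\cdot1)=(T1)^2$, and $T1\in B^{-1}$ is invertible, so $T1=1$. Substituting this into the formula supplied by Corollary \ref{cor3.7} gives $T_Ef(y)=f(\phi(y))$ for every $f\in\mcl A$ and $y\in\Ch(\mcl B)$, and it identifies $\frac{T_E}{T1}$ with $T_E$ itself, so $T_E$ is the desired complex-algebra isomorphism from $\mcl A$ onto $\mcl B$ with $T_EA=B$. This completes the argument.

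There is essentially no obstacle beyond the routine transfer of hypotheses: this corollary is precisely the specialization of Corollary \ref{cor3.7} to the case of a group homomorphism, and the only extra input is the trivial identity $T1=1$. All the substantive work — semisimplicity of $B$, the construction and continuity of $\phi$, and the elimination of the conjugation branch on $\Ch(\mcl B)\setminus K$ (i.e.\ the fact $K=\Ch(\mcl B)$) — is already carried out inside Corollaries \ref{semisimplemain} and \ref{cor3.7}.
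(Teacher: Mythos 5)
Your proposal is correct and follows exactly the paper's own argument: use the group homomorphism property to convert the one-variable hypothesis into the two-variable hypothesis of Corollary \ref{cor3.7}, invoke that corollary, and then note $T1=1$ (since $T1=(T1)^2$ with $T1$ invertible) to remove the factor $T1(y)$ from the representation. No gaps.
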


\begin{proof}
Since $T$ is a group homomorphism we see that
\[
\sigma_{\pi}(TfTg-\alpha)\cap \sigma_{\pi}(fg-\alpha)
=\sigma_{\pi}(T(fg)-\alpha)\cap \sigma_{\pi}(fg-\alpha)\ne
\emptyset
\]
holds for every pair $f$ and $g$ in $A^{-1}$. Thus by Corollary 
\ref{cor3.7} 
$T$ is extended to a map $T_E$ from $\mcl A$ onto $\mcl B$ with 
$T_EA=B$, and there exists 
a homeomorphism $\phi$ from $\Ch (\mcl B)$ onto $\Ch (\mcl A)$ 
such that the equality 
\[
T_Ef(y)=T1(y)f(\phi (y)), \quad y\in \Ch (\mcl B)
\]
holds for every $f\in \mcl A$. 
Since $T$ is a group homomorphism from $A^{-1}$ onto $B^{-1}$ we have 
that $T1=1$, so 
we conclude that the equation 
\[
T_Ef(y)=f(\phi (y)), \quad y\in \Ch (\mcl B)
\]
holds for every $f\in \mcl A$. 
\end{proof}

\section{Surjections between commutative Banach algebras}
Let $A$ and $B$ be unital semisimple commutative Banach algebras. In this 
section we consider the maps from $A$ onto $B$, which satisfy the similar 
conditions for maps from $A^{-1}$ onto $B^{-1}$ in the previous sections. 
Multiplicatively spectrum-preserving maps are 
initiated by Moln\'ar \cite{m1}, Rao and Roy \cite{rr1} and Hatori, 
Miura and Takagi \cite{hmt1} extended the results of Moln\'ar for 
uniform algebras. 
Luttman and Tonev \cite{lt} extended the results of 
Rao and Roy and Hatori, Miura and Takagi (for uniform algebras) 
 in the case where the maps 
between uniform algebras are 
multiplicatively peripheral spectrum-preserving. Inspired by the 
theorem of Luttman and Tonev we have considered the 
following question. 
\begin{quest}
Suppose that $\A$ and $\B$ are uniform algebras and $T$ is a map from $\A$ 
onto $\B$. Suppose that 
\[
\|TfTg+1\|_{\infty (Y)}=\|fg+1\|_{\infty (X)}
\]
holds for every pair $f$ and $g$ in $\A$ and $T\lambda =\lambda$ for 
every complex number $\lambda$. Does it follow that $T$ is an algebra 
isomorphism from 
$\A$ onto $\B$?
\end{quest}
In this section we give a complete solution to the above question in more 
general form (cf. Theorem \ref{semisimpleconjecture} and Corollary 
\ref{conjecture}). We also 
give a generalization of a theorem of Luttman and Tonev (cf. Corollary 
\ref{glt}.)
\begin{theorem}\label{0korovkinAB}
Let $\A$ and $\B$ be uniform algebras on compact Hausdorff spaces 
$X$ and $Y$ respectively and $S$ a map from $\A$ onto $\B$. 
Suppose that the equality $\|SfSg\|_{\infty (Y)}=
\|fg\|_{\infty (X)}$ holds for every pair $f$ and $g$ in $\A$. 
Then there exists a homeomorphism $\phi$ from $\Ch (\B)$ onto 
$\Ch (\A)$ such that the equality
\[
|Sf(y)|=|f(\phi (y)|, \quad y\in \Ch (\B)
\]
holds for every $f\in \A$. 
\end{theorem}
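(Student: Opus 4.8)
The plan is to mimic the proof of Theorem~\ref{0korovkin} almost verbatim, replacing $T,\A^{-1},\B^{-1}$ by $S,\A,\B$, with one simplification and one genuinely new point. The simplification: the single–variable norm identity $\|Sf\|_{\infty(Y)}=\|f\|_{\infty(X)}$ holds for every $f\in\A$ for free, because in a uniform algebra $\|g\|_\infty^2=\|g^2\|_\infty$, so that $\|Sf\|_{\infty(Y)}^2=\|SfSf\|_{\infty(Y)}=\|ff\|_{\infty(X)}=\|f\|_{\infty(X)}^2$ (in particular $S0=0$). Thus, unlike in Theorem~\ref{0korovkin}, there is no need to prove separately that $|S1|=1$ on $\Ch(\B)$, and the only place where the old proof used invertibility was when Lemma~\ref{bishop} was applied to a function at a point where it might now vanish. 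I would handle every such instance uniformly: to peak a $g\in\A$ (or a $Sg\in\B$) at a point $p\in\Ch(\A)$ (resp. $\Ch(\B)$) where it may vanish, I replace $g$ by $g+\lambda$ with $\lambda\ne 0$ small, apply Lemma~\ref{bishop} to $g+\lambda$, and use $\|gu\|_\infty\le\|(g+\lambda)u\|_\infty+|\lambda|$ to get $u\in P^0$ peaking at $p$ with $\|gu\|_\infty$ as small as we wish.

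First I would fix $y\in\Ch(\B)$ and set
\[
L_y=\{x\in X:\ |f(x)|=1\ \text{for every}\ f\in\A\ \text{with}\ |Sf(y)|=1=\|Sf\|_{\infty(Y)}\},
\]
and show, exactly as in Theorem~\ref{0korovkin} (finite intersection property applied to finite products $F=\prod_j Sf_j\in\B$, lifted through the surjection $S$, together with Lemma~\ref{bishop} and the perturbation trick above), that $L_y$ is a singleton whose unique point $\phi(y)$ lies in $\Ch(\A)$. With $\phi$ in hand I would record, by the same contradiction argument as in Theorem~\ref{0korovkin} (again inserting the perturbation trick so that $Sh$ may be peaked even if it vanishes at $y$), the fact that
\[
|h(\phi(y))|=1=\|h\|_{\infty(X)}\ \Longrightarrow\ |Sh(y)|=1 ;
\]
in particular $|Sh(y)|=1$ for every peaking function $h$ peaking at $\phi(y)$, and in particular for every $h\in P_\A^0(\phi(y))$.

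The new point is to establish $|Sf(y)|=|f(\phi(y))|$ for every $f\in\A$, whereas in Theorem~\ref{0korovkin} both $f(\phi(y))$ and $Tf(y)$ were automatically nonzero. I would argue as follows. If $f(\phi(y))\ne 0$, Lemma~\ref{bishop} gives $h\in P_\A^0(\phi(y))$ with $\sigma_{\pi}(fh)=\{f(\phi(y))\}$, and since $|Sh(y)|=1$ we get $|f(\phi(y))|=\|fh\|_{\infty(X)}=\|SfSh\|_{\infty(Y)}\ge|Sf(y)|$. If $Sf(y)\ne 0$, Lemma~\ref{bishop} gives $H\in P_\B^0(y)$ with $\sigma_{\pi}(SfH)=\{Sf(y)\}$, a lift $h$ with $Sh=H$ has $|h(\phi(y))|=1$ by the definition of $L_y$, and $|Sf(y)|=\|SfH\|_{\infty(Y)}=\|fh\|_{\infty(X)}\ge|f(\phi(y))|$. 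Finally, in the two remaining mixed cases the perturbation version of these two computations produces a single element whose value at $\phi(y)$ (resp. at $y$) forces an inequality $|c|\le\|fh\|_{\infty(X)}=\|SfSh\|_{\infty(Y)}<|c|$, a contradiction; hence $Sf(y)=0$ if and only if $f(\phi(y))=0$, and in all cases $|Sf(y)|=|f(\phi(y))|$.

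To finish, I would proceed as in Theorem~\ref{0korovkin}: $\phi$ is continuous because $\{|f|:f\in\A\}$ separates the points of $X$ ($\A$ contains the constants) and hence induces the original topology of $X$ by the Alexandroff theorem, while $|f(\phi(y_\alpha))|=|Sf(y_\alpha)|\to|Sf(y)|=|f(\phi(y))|$ along any convergent net; the symmetric construction with $K_x=\{y\in Y:\ |Sf(y)|=1\ \text{for all}\ f\in\A\ \text{with}\ |f(x)|=1=\|f\|_{\infty(X)}\}$ (here the relevant finite products are $f=\prod_j f_j\in\A$ and $F=Sf$) yields a continuous $\psi:\Ch(\A)\to\Ch(\B)$ with $|Sf(\psi(x))|=|f(x)|$ for all $f\in\A$; and then $|Sf(y)|=|f(\phi(y))|=|Sf(\psi(\phi(y)))|$ for all $f\in\A$ together with the surjectivity of $S$ (so that $\{|F|:F\in\B\}$ separates the points of $Y$) forces $\psi\circ\phi=\mathrm{id}$, and symmetrically $\phi\circ\psi=\mathrm{id}$, so $\phi$ is the desired homeomorphism. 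I expect the main obstacle to be the bookkeeping around the possible zeros of $f$ and of $Sf$, i.e. making sure the perturbation device is inserted at every place where the proof of Theorem~\ref{0korovkin} tacitly exploited invertibility, rather than any essentially new idea.
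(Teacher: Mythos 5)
Your proposal follows essentially the same route as the paper, which itself proves this theorem by adapting the proof of Theorem \ref{0korovkin}: the sets $L_y$ and $K_x$, the maps $\phi$ and $\psi$, the separate treatment of the cases $Sf(y)=0$ and $f(\phi(y))=0$ via small products with nonvanishing peaking functions, and the Alexandroff-theorem continuity argument all coincide with the paper's sketch. Your two refinements --- deriving $\|Sf\|_{\infty (Y)}=\|f\|_{\infty (X)}$ directly from $\|Sf\|_{\infty (Y)}^2=\|SfSf\|_{\infty (Y)}=\|f^2\|_{\infty (X)}=\|f\|_{\infty (X)}^2$ instead of first proving $|S1|=1$ on $\Ch (\B)$, and the explicit perturbation $g\mapsto g+\lambda$ giving $u\in P_{\A}^0$ (or $P_{\B}^0$) with $\|gu\|_{\infty}<\varepsilon$ at points where $g$ vanishes, where the paper merely asserts that such functions exist --- are correct and only make the argument more self-contained.
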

Note that $S$ need not be injective. 
\begin{proof}
We can prove Theorem \ref{0korovkinAB} in a way similar to the proof of 
Theorem \ref{0korovkin} and we only show a sketch of the proof. 

In a way similar in the proof of Theorem \ref{0korovkin} we see that
$|S1(y)|=1$ for every $y\in \Ch (\B)$. Thus we see that the equality 
$\|Sf\|_{\infty (Y)}=\|f\|_{\infty (X)}$ holds for every $f\in \A$. 
Let $y\in \Ch (\B)$ and put
\begin{multline*}
L_y=\{x\in X:\text{$|f(x)|=1$ for every $f\in \A$ with} \\ 
|Sf(y)|=1=\|Sf\|_{\infty (Y)}\}.
\end{multline*}
Then $L_y$ is a singleton whose element is a point in $\Ch (\A)$. Put 
a function $\phi$ from $\Ch (\B)$ into $\Ch (\A)$ by $\phi (y)=$ the 
unique element of $L_y$. Then in the same way as in the proof of Theorem 
\ref{0korovkin} we see that 
$|Sf(y)|=|f(\phi (y))|$ holds for every $f\in \A$ and $y\in \Ch(\B)$ 
if $Sf(y)\ne0$ and $f(\phi(y))\ne 0$. 
We show that $|Sf(y)|=|f(\phi (y))|$ holds even if $Sf(y)=0$ or 
$f(\phi (y))=0$. Suppose that $Sf(y)=0$. Then for every positive $\varepsilon$ 
there exists an $H_{\varepsilon}\in P_{\B}^0(y)$ such that 
$\|SfH_{\varepsilon}\|_{\infty (Y)}<\varepsilon$. Since $S$ is a 
surjection there is an $h_{\varepsilon}\in \A$ with 
$Sh_{\varepsilon}=H_{\varepsilon}$. Then by the definition of $\phi (y)$, 
$|h_{\varepsilon}(\phi (y))|=1$ holds since 
\[
Sh_{\varepsilon}(y)=H_{\varepsilon}(y)=1=
\|H_{\varepsilon}\|_{\infty (Y)}=\|Sh_{\varepsilon}\|_{\infty (Y)}.
\] 
Thus we have that 
\[
\varepsilon > \|SfH_{\varepsilon}\|_{\infty (Y)}
=
\|fh_{\varepsilon}\|_{\infty (X)}\ge
|f(\phi (y))h_{\varepsilon}(\phi (y))|=|f(\phi (y))|,
\]
so the equalities $f(\phi (y))=0=Sf(y)$ holds since $\varepsilon$ is 
arbitrary. Suppose that $f(\phi (y))=0$. Then for every positive 
$\varepsilon$, there exists a $u_{\varepsilon}\in P_{\A}^0(\phi (y))$ 
such that $\|fu_{\varepsilon}\|_{\infty (X)}<\varepsilon$. 
We see that $|Su_{\varepsilon}(y)|=1$. Suppose not. Then 
$|Su_{\varepsilon}(y)|<1$ since $\|Su_{\varepsilon}\|_{\infty (Y)}
=\|u_{\varepsilon}\|_{\infty (X)}=1$. So there exists an $H\in 
P_{\B}^0(y)$ with $\|Su_{\varepsilon}H\|_{\infty (Y)}<1$. 
Since $S$ is a surjection there is an $h\in \A$ with 
$Sh=H$. Then by the definition of $\phi(y)$ we see that 
$|h(\phi (y)|=1$ since $Sh(y)=1=\|Sh\|_{\infty (Y)}$. 
It follows that 
\[
1>\|Su_{\varepsilon}H\|_{\infty (Y)}=\|u_{\varepsilon}h\|_{\infty (X)}
\ge |u_{\varepsilon}(\phi (y))h(\phi (y))|=|u_{\varepsilon}(\phi (y))|,
\]
which is a contradiction proving that $|Su_{\varepsilon} (y)|=1$. Then 
we have that
\[
\varepsilon >\|fu_{\varepsilon}\|_{\infty (X)}=
\|SfSu_{\varepsilon}\|_{\infty (Y)}\ge |Sf(y)Su_{\varepsilon}(y)|
=|Sf(y)|,
\]
and so we have that $Sf(y)=0=f(\phi (y))$ since $\varepsilon$ is 
arbitrary. We conclude that 
the equality 
\[
|Sf(y)|=|f(\phi (y))|,\quad y\in \Ch (\B)
\]
holds for every $f\in \A$. In the same way as in the proof of 
Theorem \ref{0korovkin} we see that $\phi$ is continuous. 

Next let $x\in \Ch (\A)$ and put
\begin{multline*}
K_x=\{y\in Y:\text{$|Sf(y)|=1$ for every $f\in \A$ with} \\ 
|f(x)|=1=\|f\|_{\infty (X)}\}.
\end{multline*}
In a way similar in the proof of Theorem \ref{0korovkin} we see that 
$K_x$ is a singleton which consists of a point in $\Ch (\B)$. Put a function 
$\psi$ from $\Ch (\A)$ into $\Ch (\B)$ such that $\psi (x)=$ the unique 
element in $K_x$. In a way similar in the proof of Theorem 
\ref{0korovkin} and the first part of the proof we see that the 
equality
\[
|Sf(\psi (x))|=|f(x)|, \quad x\in \Ch (\A)
\]
holds for every $f\in \A$ and $\psi$ is continuous on $\Ch (\A)$. 
Again in a way similar in the proof of Theorem \ref{0korovkin} we 
see that $\phi\circ\psi$ and $\psi\circ\phi$ are identity functions 
on $\Ch (\A)$ and $\Ch (\B)$ respectively, so that $\phi$ is a homeomorphism 
from $\Ch (\B)$ onto $\Ch (\A)$. 
\end{proof}

The following is a generalization of a theorem of 
Luttman and Tonev \cite{lt} and it is related to Corollary 3 in 
\cite{l}. 

\begin{cor}\label{glt}
Let $\A$ and $\B$ be uniform algebras on compact Hausdorff spaces 
$X$ and $Y$ respectively. Suppose that $S$ is a map from 
$\A$ onto $\B$ such that the inclusion
\[
\sigma_{\pi}(SfSg)\subset \sigma_{\pi}(fg)
\]
holds for every pair $f$ and $g$ in $\A$. Then 
$(S1)^2=1$ and there exists a homeomorphism $\phi$ from $\Ch (\B)$ onto 
$\Ch (\A)$ such that the equality 
\[
Sf(y)=S1(y)f(\phi (y)), \quad y\in \Ch (\B)
\]
holds for every $f\in \A$. In particular, $\frac{S}{S1}$ is an isometrical 
algebra isomorphism from $\A$ onto $\B$. 
\end{cor}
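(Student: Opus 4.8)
The plan is to run the argument of Corollary~\ref{vlt} almost verbatim, with Theorem~\ref{0korovkin} replaced by its analogue for maps of the whole algebras, Theorem~\ref{0korovkinAB}, paying attention at the two spots where $Sf$ or $S1$ may vanish (which cannot happen in Corollary~\ref{vlt}, where the images lie in an invertible group). First I would note that in a uniform algebra the peripheral spectrum $\sigma_{\pi}(h)$ of any element is nonempty and all of its elements have modulus $\|h\|_{\infty}$; hence $\sigma_{\pi}(SfSg)\subset\sigma_{\pi}(fg)$ forces $\|SfSg\|_{\infty(Y)}=\|fg\|_{\infty(X)}$ for all $f,g\in\A$, so Theorem~\ref{0korovkinAB} provides a homeomorphism $\phi$ of $\Ch(\B)$ onto $\Ch(\A)$ with $|Sf(y)|=|f(\phi(y))|$ for every $f\in\A$ and $y\in\Ch(\B)$. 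Taking $f=g=1$, the inclusion $\sigma_{\pi}((S1)^{2})\subset\sigma_{\pi}(1)=\{1\}$ together with nonemptiness gives $\sigma_{\pi}((S1)^{2})=\{1\}$, so $1$ is the only value of $(S1)^{2}$ of modulus $\|(S1)^{2}\|_{\infty(Y)}=1$; since $|S1(y)|=|1(\phi(y))|=1$ on $\Ch(\B)$ as just observed, $(S1)^{2}=1$ on $\Ch(\B)$, hence on $Y$ because $\Ch(\B)$ is a boundary for $\B$.

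Next I would treat the case $S1=1$. For $y\in\Ch(\B)$ and $u\in P_{\A}(\phi(y))$ one has $\sigma_{\pi}(Su)=\sigma_{\pi}((Su)(S1))\subset\sigma_{\pi}(u\cdot1)=\{1\}$, so $\sigma_{\pi}(Su)=\{1\}$; thus $Su$ is a peaking function, and since $|Su(y)|=|u(\phi(y))|=1$ it peaks at $y$, i.e.\ $Su(y)=1$. Now fix $f\in\A$ and $y\in\Ch(\B)$. If $f(\phi(y))=0$, then $Sf(y)=0$ by Theorem~\ref{0korovkinAB}, so $Sf(y)=f(\phi(y))$. If $f(\phi(y))\neq0$, Lemma~\ref{bishop} gives $u\in P_{\A}^{0}(\phi(y))$ with $\sigma_{\pi}(fu)=\{f(\phi(y))\}$, whence $\sigma_{\pi}(SfSu)\subset\{f(\phi(y))\}$ and, being nonempty, $\sigma_{\pi}(SfSu)=\{f(\phi(y))\}$; evaluating at $y$ and using $Su(y)=1$ and $|Sf(y)|=|f(\phi(y))|$ gives
\[
|f(\phi(y))|=\|SfSu\|_{\infty(Y)}\ge|Sf(y)Su(y)|=|Sf(y)|=|f(\phi(y))|,
\]
so $Sf(y)Su(y)=Sf(y)$ attains the norm of $SfSu$, hence lies in $\sigma_{\pi}(SfSu)=\{f(\phi(y))\}$, i.e.\ $Sf(y)=f(\phi(y))$. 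Therefore $Sf(y)=f(\phi(y))$ on $\Ch(\B)$ for every $f\in\A$ when $S1=1$.

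For the general case, since $(S1)^{2}=1$ the element $S1$ is invertible with $(S1)^{-1}=S1$, so $\widetilde S:=S/S1$ is a well-defined surjection of $\A$ onto $\B$ with $\widetilde S1=1$ and $\sigma_{\pi}(\widetilde Sf\,\widetilde Sg)=\sigma_{\pi}(SfSg)\subset\sigma_{\pi}(fg)$; applying the previous case to $\widetilde S$ yields a homeomorphism $\phi$ with $\widetilde Sf(y)=f(\phi(y))$ on $\Ch(\B)$, that is, $Sf(y)=S1(y)f(\phi(y))$ on $\Ch(\B)$. For the last assertion, let $R\colon\B\to\B|_{\Ch(\B)}$ be the bijective isometrical algebra restriction isomorphism and $T_{e}\colon\A\to\B|_{\Ch(\B)}$ the map $T_{e}f=f\circ\phi$; since $\phi$ is a homeomorphism of $\Ch(\B)$ onto $\Ch(\A)$ and $\Ch(\A)$ is a boundary for $\A$, $T_{e}$ is a unital multiplicative isometric linear map, and it is onto because $R\circ\widetilde S=T_{e}$ while $\widetilde S$ is onto $\B$; hence $\widetilde S=R^{-1}\circ T_{e}$ is an isometrical algebra isomorphism of $\A$ onto $\B$, which is the claim for $S/S1$. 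I expect the only genuine obstacle beyond transcribing the proof of Corollary~\ref{vlt} to be the handling of boundary points where $f\circ\phi$ vanishes and of a possibly vanishing $S1$; this is why the full strength of Theorem~\ref{0korovkinAB}, which already covers those vanishing values, is invoked there in place of Lemma~\ref{bishop}.
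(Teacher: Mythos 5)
Your proposal is correct and follows essentially the same route as the paper: the inclusion forces $\|SfSg\|_{\infty(Y)}=\|fg\|_{\infty(X)}$, Theorem \ref{0korovkinAB} supplies $\phi$ with $|Sf(y)|=|f(\phi(y))|$, Lemma \ref{bishop} peaking functions give $Sf(y)=f(\phi(y))$ when $S1=1$ (with the vanishing case handled by the modulus identity), and the general case is reduced by dividing by $S1$ after $(S1)^2=1$. Your derivation of $(S1)^2=1$ directly from $|S1|=1$ on $\Ch(\B)$ and $\sigma_{\pi}((S1)^2)=\{1\}$ is a harmless (indeed slightly cleaner) variant of the argument the paper imports from Corollary \ref{vlt}.
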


\begin{proof}
A proof is similar to that of Corollary \ref{vlt} and we sketch a proof. 
First we consider the case where $S1=1$. By the inclusion 
$\sigma_{\pi}(SfSg)\subset \sigma_{\pi}(fg)$, we have that 
$\|SfSg\|_{\infty (Y)}=\|fg\|_{\infty (X)}$ holds for every pair 
$f$ and $g$ in $\A$. Thus by Theorem \ref{0korovkinAB} 
there exists a homeomorphism $\phi$ from 
$\Ch (\B)$ onto $\Ch (\A)$ such that the equality 
\begin{equation}\label{=}
|Sf(y)|=|f(\phi (y))|, \quad y\in \Ch (\B)
\end{equation}
holds for every $f\in \A$. We show that the equality 
\[
Sf(y)=f(\phi (y)), \quad y\in \Ch (\B)
\]
holds for every $f\in \A$. 
If $f(\phi(y))\ne 0$, then the proof is similar to that in the proof of 
Corollary \ref{vlt}. 
If $f(\phi (y))=0$, then $Sf(y)=f(\phi (y))$ holds 
by the above equation \ref{=}.

Finally we consider the general case; We do not assume $S1=1$. 
In a way similar to the proof of Corollary \ref{vlt} we see that 
$(S1)^2=1$. So $S1\in \B^{-1}$.
Put a map $\tilde S$ from $\A$ into $\B$ by $\tilde Sf
=\frac{Sf}{S1}$. Then $\tilde S$ is a surjection and the inclusion
\[
\sigma_{\pi}(\tilde Sf\tilde Sg)\subset \sigma_{\pi}(fg)
\]
holds for every pair $f$ and $g$ in $\A$ since $(S1)^2=1$. 
By the first part of the proof we see that the conclusion holds.
\end{proof}

\begin{theorem}\label{semisimpleconjecture}
Let $A$ be a unital semisimple commutative Banach algebra and $B$ 
a unital commutative Banach algebra. Suppose that $S$ is a map from 
$A$ onto $B$ 
which satisfies that there exists a non-zero complex number $\alpha$ 
such that the equality
\[
\rr (SfSg-\alpha )=\rr (fg-\alpha )
\]
holds for every pair $f$ and $g$ in $A$. Then $B$ is semisimple and 
there exist an $\eta\in B$ with $\eta ^2=1$, a homeomorphism $\phi$ 
from $\Ch (\mcl B)$ onto 
$\Ch (\mcl A)$ and a clopen subset $K$ of $\Ch (\mcl B)$ such that the 
equality 
\begin{equation*}
Sf(y)=\eta (y)\times
\begin{cases}
f(\phi (y)), & y\in K, \\
\frac{\alpha}{|\alpha|}\overline{f(\phi (y))}, 
& y \in \Ch (\mcl B)\setminus K
\end{cases}
\end{equation*}
holds for every $f\in A$. Furthermore if $S1=1$, then $\eta=1$. 
Furthermore if $S1=1$, and $\alpha \in {\mathbb C}\setminus{\mathbb R}$ or 
there exists a $\lambda \in {\mathbb C}\setminus {\mathbb R}$ such that 
$S\lambda = \lambda$, then $K=\Ch (\mcl B)$ and the equality 
\[
Sf(y)=f(\phi (y)), \quad y\in \Ch (\mcl B)
\]
holds for every $f\in A$. 
\end{theorem}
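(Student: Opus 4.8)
The plan is to imitate the proofs of Corollaries~\ref{semisimplemain} and~\ref{cor3.5}, the one genuinely new point being the promotion of the description from $A^{-1}$ to all of $A$. First I would remove the parameter $\alpha$ exactly as in the proof of Corollary~\ref{cor3.5}: choosing $\beta$ with $\beta^2=\alpha$ and setting $S_\beta f=\beta^{-1}S(\beta f)$, one checks that $S_\beta$ is a surjection of $A$ onto $B$ with $\rr(S_\beta fS_\beta g-1)=\rr(fg-1)$, and the representation obtained for $S_\beta$ transfers to $S$ via $Sf=\beta S_\beta(\beta^{-1}f)$ with $\eta=\beta^{-1}S(\beta)$; the three ``Furthermore'' clauses come out just as there. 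So assume from now on that $\alpha=1$.

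Next I would restrict $S$ to the invertible groups. For $f\in A^{-1}$ we have $\rr(Sf\,S(f^{-1})-1)=\rr(ff^{-1}-1)=0$, so $Sf\,S(f^{-1})-1$ lies in the radical of $B$; hence $Sf\,S(f^{-1})$ is invertible and therefore $Sf\in B^{-1}$. Conversely, given $G\in B^{-1}$, pick $f,g\in A$ with $Sf=G$ and $Sg=G^{-1}$; then $\rr(fg-1)=\rr(GG^{-1}-1)=0$, and since $A$ is semisimple $fg=1$, so $f\in A^{-1}$. Thus $S|_{A^{-1}}$ is a surjection of $A^{-1}$ onto $B^{-1}$ with $\rr(SfSg-1)=\rr(fg-1)$, and Corollary~\ref{semisimplemain} applies: $B$ is semisimple, $(S1)^2=1$, and $S|_{A^{-1}}$ extends to a map $T_E\colon\mcl A\to\mcl B$ with $T_E(A)=B$ which, together with a homeomorphism $\phi\colon\Ch(\mcl B)\to\Ch(\mcl A)$ and a clopen $K\subset\Ch(\mcl B)$, realizes the stated formula and satisfies $\|T_EfT_Eg-1\|_{\infty(M_B)}=\|fg-1\|_{\infty(M_A)}$ for all $f,g\in\mcl A$; from its representation $T_E$ is a real-algebra isomorphism up to the factor $S1$ and preserves the spectral radius. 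Dividing $S$ by $S1$ (licit since $(S1)^2=1$) I may assume $S1=1$, so that $T_E$ is a real-algebra isomorphism with $T_Ef(y)=f(\phi y)$ on $K$ and $\overline{f(\phi y)}$ off $K$; the general case is recovered by multiplying back by $S1$.

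It remains to show $S=T_E$ on all of $A$. I would put $U:=T_E^{-1}\circ S\colon A\to A$; it is a surjection, it is the identity on $A^{-1}$ (since $S=T_E$ there), and because $T_E$ is a real-algebra isomorphism which preserves the spectral radius, $\rr(Uf\,Ug-1)=\rr(T_E(UfUg)-1)=\rr(SfSg-1)=\rr(fg-1)$ for all $f,g\in A$. Taking $g\in A^{-1}$ (so $Ug=g$) and then, by the density of $A^{-1}$ in $(\mcl A)^{-1}$ — valid because a nowhere-vanishing element of $A$ is invertible in $A$ — letting $g$ range over $(\mcl A)^{-1}$, one gets $\|\widehat{Uf}\,g-1\|_{\infty(M_A)}=\|\hat f\,g-1\|_{\infty(M_A)}$ for every $g\in(\mcl A)^{-1}$; replacing $g$ by $\lambda g$ ($|\lambda|=1$) and by $tg$ ($t>0$) upgrades this to $\|\widehat{Uf}\,g-c\|_{\infty(M_A)}=\|\hat f\,g-c\|_{\infty(M_A)}$ for all $c\in\mathbb{C}$. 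Now fix $x_0\in\Ch(\mcl A)$ with $\hat f(x_0)\ne0$; by Lemma~\ref{bishop} there is $u\in P_{\mcl A}^0(x_0)$ with $\sigma_\pi(\hat f u)=\{\hat f(x_0)\}$. Since the map $c\mapsto\|h-c\|_\infty$ determines the closed convex hull of the range of $h$, the ranges of $\widehat{Uf}\,u$ and $\hat f\,u$ have the same closed convex hull, whose only point of maximal modulus is $\hat f(x_0)$; as $(\widehat{Uf}u)(x_0)=\widehat{Uf}(x_0)$ lies in this hull, $|\widehat{Uf}(x_0)|\le|\hat f(x_0)|$, and the symmetric argument (together with the shrinking-peaking-function device in the proof of Theorem~\ref{0korovkinAB} for the boundary points where $\hat f$ or $\widehat{Uf}$ vanishes) yields first $|\widehat{Uf}(x_0)|=|\hat f(x_0)|$ and then $\widehat{Uf}(x_0)=\hat f(x_0)$. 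Hence $\widehat{Uf}=\hat f$ on the boundary $\Ch(\mcl A)$, so on $M_A$, and by semisimplicity of $A$, $Uf=f$; thus $S=T_E$ and the representation of $S$ is exactly that of $T_E$.

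The step I expect to be the main obstacle is this last one: the available non-symmetric machinery (Lemmas~\ref{0}--\ref{8} and Theorem~\ref{main}) is built for maps between invertible groups and rests on forming products of invertible and peaking elements, whereas here $S$ is a priori only defined on $A$. The device that makes the transfer work is passing to $U=T_E^{-1}\circ S$, which reduces the task to showing that a surjection of $A$ that is the identity on $A^{-1}$ and satisfies $\rr(UfUg-1)=\rr(fg-1)$ must be the identity; the two facts that power this are the density of $A^{-1}$ in $(\mcl A)^{-1}$ and the elementary remark that $c\mapsto\|h-c\|_\infty$ already detects the convex hull of the range of $h$. Some care is still needed at the boundary points where $\hat f$ or $\widehat{Uf}$ vanishes, to be dealt with exactly as in the proof of Theorem~\ref{0korovkinAB}.
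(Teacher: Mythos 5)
Your proposal is correct and follows essentially the same route as the paper: show $S(A^{-1})=B^{-1}$, apply Corollary \ref{cor3.5} (equivalently Corollary \ref{semisimplemain} after removing $\alpha$) to the restriction $S|_{A^{-1}}$, and then identify $S$ with the extension on all of $A$ by comparing $\|SfG-\alpha\|_{\infty(M_B)}$ with $\|T_EfG-\alpha\|_{\infty(M_B)}$ as $G$ ranges over $(\mcl B)^{-1}$ (using density of $A^{-1}$ in $(\mcl A)^{-1}$) and evaluating at Choquet boundary points via Lemma \ref{bishop}. The only differences are cosmetic: your radical argument for invertibility of $Sf$ (the paper first assumes $B$ semisimple and removes that assumption afterwards via the Gelfand transform), your transfer to $U=T_E^{-1}\circ S$, and your farthest-distance/convex-hull device in the last step, where the paper instead rescales $G$ by $n$ to isolate the modulus and then rotates by $\mu=-\alpha\overline{Sf(y)}/|Sf(y)|$ to pin down the argument --- all of which accomplish the same thing.
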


\begin{proof}
First we consider the case where $B$ is semi-simple. We show that 
$SA^{-1}=B^{-1}$. Let $f\in A^{-1}$. Put $g=\alpha f^{-1}$. Then we have that 
\[
0=\rr (fg-\alpha) = \rr (SfSg-\alpha ), 
\]
so $SfSg=\alpha$ for $B$ is semisimple. Since $\alpha$ is a non-zero 
complex number we see that $Sf\in B^{-1}$; 
we have proved that $SA^{-1}\subset B^{-1}$. Suppose that $F\in B^{-1}$. 
Since $SA=B$, there exist an $f$ and a $g$ in $A$ with $Sf=F$ and 
$Sg= \alpha F^{-1}$. Then we have that 
\[
0=\rr (SfSg-\alpha )=\rr (fg - \alpha ),
\]
so $fg=\alpha$ for $A$ is semisimple. Thus we see that $f\in A^{-1}$. 
It follows that $SA^{-1}=B^{-1}$. Applying Corollary \ref{cor3.5} to 
$S|A^{-1}$ we see that there corresponds the extended map $(S|A^{-1})_E$ from 
$\mcl A$ onto $\mcl B$ such that the equality
\begin{equation}\label{7.0}
\|(S|A^{-1})_Ef(S|A^{-1})_Eg-\alpha \|_{\infty (M_B)} 
=\|fg-\alpha \|_{\infty (M_A)}
\end{equation}
holds for every pair $f$ and $g$ in $\mcl A$. 
We also see by Corollary \ref{cor3.5} that there exist an $\eta\in B^{-1}$ 
with $\eta ^2=1$, a homeomorphism $\phi$ from $\Ch (\mcl B)$ onto 
$\Ch (\mcl A)$ and a clopen subset $K$ of $\Ch (\mcl B)$ which satisfy that 
\begin{equation*}
((S|A^{-1})_E(f))(y)= \eta (y) \times 
\begin{cases}
f(\phi (y)), & y\in K, \\
\frac{\alpha}{|\alpha|}\overline{f(\phi (y))},& y\in \Ch (\mcl B)\setminus K.
\end{cases}
\end{equation*}
If $S1=1$, and $\alpha \in {\mathbb C}\setminus{\mathbb R}$ or 
there exists a $\lambda \in {\mathbb C}\setminus {\mathbb R}$ with 
$S\lambda = \lambda$,
then
\[
((S|A^{-1})_E(f))(y)=f(\phi (y)), \quad y\in \Ch (\mcl B)
\]
holds for every $f\in A$ by Corollary \ref{cor3.5}.

We show that $(S|A^{-1})_E=S$ on $A$. 
By the definition of 
$(S|A^{-1})_E$ we have that for every $g\in (\mcl A)^{-1}$ 
there exists a sequence $\{g_n\}$ in $A^{-1}$ with 
$\|g-g_n\|_{\infty (M_A)}\to 0$ as $n\to \infty$, and 
the 
equality
\[
\|(S|A^{-1})_Eg-Sg_n\|_{\infty (M_B)}\to 0
\]
holds as $n\to \infty$. Thus for every 
$f\in A$ and $g\in (\mcl A)^{-1}$ the equality 
\[
\|SfSg_n-\alpha\|_{\infty (M_B)}=\|fg_n-\alpha\|_{\infty (M_A)}
\]
holds, where
$\{g_n\}\subset A^{-1}$ and $\|g-g_n\|_{\infty (M_A)} \to 0$ 
as $n\to \infty$.
Letting $n\to \infty$ we see that the equality
\begin{equation}\label{7.1}
\|Sf(S|A^{-1})_Eg-\alpha\|_{\infty (M_B)}=\|fg-\alpha \|_{\infty (M_A)}
\end{equation}
holds for every $f\in A$ and $g\in (\mcl A)^{-1}$. Since 
$(S|A^{-1})_E((\mcl A)^{-1})=(\mcl B)^{-1}$ holds  
we see that the equality
\begin{equation*}
\|SfG-\alpha\|_{\infty (M_B)}=\|(S|A^{-1})_EfG-\alpha\|_{\infty (M_B)}
\end{equation*}
holds for every $f\in A$ and $G\in (\mcl B)^{-1}$ by the equations 
(\ref{7.0}) and (\ref{7.1}). Applying 
peaking function argument as before it follows that the equality
\[
Sf(y)=(S|A^{-1})_Ef(y), \quad y\in \Ch (\mcl B)
\]
holds for every $f\in A$. 
We show a proof for a convenience. 
Substituting $G$ by $nG$ for positive integer $n$, we have that
\[
\|Sf(nG)-\alpha\|_{\infty (M_B)}=\|(S|A^{-1})_Ef(nG)-\alpha\|_{\infty (M_B)}, 
\]
so
\[
\|SfG-\frac{\alpha}{n}\|_{\infty (M_B)}=
\|(S|A^{-1})_EfG-\frac{\alpha}{n}\|_{\infty (M_B)}, 
\]
and letting $n\to \infty$ we have
\[
\|SfG\|_{\infty (M_B)}=\|(S|A^{-1})_EfG\|_{\infty (M_B)}
\]
holds for every pair $f\in A$ and $G\in (\mcl B)^{-1}$. 
Suppose that $f\in A$ and $y\in \Ch (\mcl B)$. If 
$Sf(y)=0$, then there exists a sequence $\{G_n\}$ in $P_{\mcl B}^0(y)$ 
with 
\[
\|SfG_n\|_{\infty (M_B)}\to 0,
\]
as $n\to \infty$ so that
\[
\|(S|A^{-1})_EfG_n\|_{\infty (M_B)}\to 0.
\]
as $n\to \infty$. 
It follows that $(S|A^{-1})_Ef(y)=0$. In the same way we see that $Sf(y)=0$ 
if $(S|A^{-1})_Ef(y)=0$. Suppose that $Sf(y)\ne 0$ and 
$(S|A^{-1})_Ef(y)\ne 0$. Applying Lemma \ref{bishop} we see that 
there exists a $G\in P_{\mcl B}^0(y)$ such that 
\begin{equation}\label{2}
\sigma_{\pi}(SfG)=\{Sf(y)\}, \quad \sigma_{\pi}
((S|A^{-1})_EfG)=\{(S|A^{-1})_Ef(y)\}.
\end{equation}
Thus we see that 
\begin{equation}\label{3}
|Sf(y)|=\|SfG\|_{\infty (M_B)}=
\|(S|A^{-1})_EfG\|_{\infty (M_B)}
=|(S|A^{-1})_Ef(y)|.
\end{equation}
Put $\mu= \frac{-\alpha \overline{Sf(y)}}{|Sf(y)|}$. 
Then we have
\[
\|\mu Sf G-\alpha\|_{\infty (M_B)} = |\alpha|
\|\frac{\overline{Sf(y)}}{|Sf(y)|}SfG+1\|_{\infty (M_B)}=
|\alpha |\left(|Sf(y)|+1\right).
\]
On the other hand 
\begin{multline*}
\|\mu SfG - \alpha\|_{\infty (M_B)}= 
\|\mu (S|A^{-1})_EfG-\alpha\|_{\infty (M_B)}\\
=|\alpha |\|\frac{\overline{Sf(y)}}{|Sf(y)|}(S|A^{-1})_EfG+1\|_{\infty (M_B)}.
\end{multline*}
Applying the equations (\ref{2}) and (\ref{3}) we see that 
$Sf(y)=(S|A^{-1})_Ef(y)$. 
Thus we see that $S=(S|A^{-1})_E$ on $A$. 

Finally we consider the general case, where we do not assume that $B$ is 
semisimple. Let $\Gamma$ be the Gelfand transform on $B$. By applying the 
conclusion of the first part of the proof, we see that the map 
$\Gamma \circ S$ from $A$ onto $\hat B$, the Gelfand transform of $B$, is 
injective. It follows that $\Gamma$ is injective. Thus we see that 
$B$ is semisimple. 
Applying the first part of the proof 
we see that the conclusion holds. 
\end{proof}

Since uniform algebras are unital semisimple commutative 
Banach algebras, 
we see that the following holds.

\begin{cor}\label{conjecture}
Let $\A$ and $\B$ be uniform algebras on compact Hausdorff spaces 
$X$ and $Y$ respectively. Suppose that $S$ is a map from 
$\A$ onto $\B$ which satisfies that there exists a nonzero complex 
number $\alpha$ such that the equality
\[
\|SfSg-\alpha \|_{\infty (Y)}=\|fg-\alpha\|_{\infty (X)}
\]
holds for every pair $f$ and $g$ in $A$. Then there exist an $\eta \in B$ 
with $\eta ^2=1$, a homeomorphism $\phi$ from $\Ch (\B)$ onto $\Ch (\A)$, 
and a clopen subset $K$ of $\Ch (\B)$ such that the equality
\begin{equation*}
Sf(y)=\eta (y)\times
\begin{cases}
f(\phi (y)), &y\in K, \\
\frac{\alpha}{|\alpha |}\overline{f(\phi (y))}, & y\in \Ch (\B)\setminus K
\end{cases}
\end{equation*}
holds for every $f\in \A$. Furthermore if $T1=1$, then the equality
\begin{equation*}
Sf(y)=
\begin{cases}
f(\phi (y)),& y\in K, \\
\overline{f(\phi (y))}. & y\in \Ch (\B)\setminus K
\end{cases}
\end{equation*}
holds for every $f\in \A$. Furthermore if $T1=1$, and $\alpha \in {\mathbb C}
\setminus {\mathbb R}$ or there exists a $\lambda \in 
{\mathbb C}\setminus {\mathbb R}$ with $T\lambda = \lambda$, then the 
equality
\[
Tf(y)=f(\phi (y)), \quad y\in \Ch (\B)
\]
holds for every $f\in \A$. 
\end{cor}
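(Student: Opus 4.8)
The plan is to obtain Corollary \ref{conjecture} as a direct specialization of Theorem \ref{semisimpleconjecture}, the only work being the standard identification of a uniform algebra, viewed as an abstract Banach algebra, with itself as a function algebra on its maximal ideal space.

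First I would record the relevant properties of a uniform algebra $\A$ on a compact Hausdorff space $X$. It is a unital (it contains the constants), semisimple (a function vanishing on $M_\A$ in particular vanishes on $X$, hence is $0$), commutative Banach algebra, and for $f\in\A$ one has $\rr(f)=\|\hat f\|_{\infty(M_\A)}=\|f\|_{\infty(X)}$, since $X$ is a closed norming set for $\A$ inside $M_\A$. Because $\A\to\hat\A$ is then an isometric isomorphism onto a complete, hence closed, subalgebra of $C(M_\A)$, we have $\mcl\A=\hat\A$: the uniform closure of the Gelfand transform of $\A$ is just $\A$ regarded on $M_\A$. Moreover $X$ is compact, hence closed in $M_\A$, so the \v{S}ilov boundary of $\A$ is contained in $X$ and therefore $\Ch(\mcl\A)$ (the Choquet boundary of $\hat\A$ on $M_\A$) is naturally identified with $\Ch(\A)$ (the Choquet boundary relative to $X$); indeed a point of $X$ is a $p$-point for $\A$ relative to $X$ if and only if it is one relative to $M_\A$. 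The identical remarks apply to $\B$ on $Y$, and there $\B$ is automatically semisimple.

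With these identifications the hypothesis $\|SfSg-\alpha\|_{\infty(Y)}=\|fg-\alpha\|_{\infty(X)}$ for all $f,g\in\A$ reads exactly as $\rr(SfSg-\alpha)=\rr(fg-\alpha)$ for all $f,g\in\A$. I would then apply Theorem \ref{semisimpleconjecture} with $A=\A$ and $B=\B$ to get $\eta\in\B$ with $\eta^2=1$, a homeomorphism $\phi$ from $\Ch(\mcl\B)=\Ch(\B)$ onto $\Ch(\mcl\A)=\Ch(\A)$, and a clopen subset $K$ of $\Ch(\mcl\B)=\Ch(\B)$ such that the stated formula for $Sf$ holds on $\Ch(\B)$ for every $f\in\A$; since $\Ch(\B)$ is a boundary for $\B$, this determines $Sf\in\B$ on all of $Y$ in the indicated sense. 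The ``furthermore'' clauses---the case $S1=1$, and the further case in which additionally $\alpha\in{\mathbb C}\setminus{\mathbb R}$ or $S\lambda=\lambda$ for some $\lambda\in{\mathbb C}\setminus{\mathbb R}$---follow verbatim from the corresponding assertions of Theorem \ref{semisimpleconjecture} (the symbol $T$ appearing in those clauses of the statement is a misprint for $S$).

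I do not expect a genuine obstacle: the corollary is a ``translation'' statement. The only point needing a word of care is the identification $\Ch(\mcl\A)\cong\Ch(\A)$ used above, which rests on the classical facts that the spectral radius of a uniform algebra equals the supremum norm on $X$, and that the Choquet boundary is contained in the \v{S}ilov boundary and hence in the compact norming set $X$; everything else is routine invocation of Theorem \ref{semisimpleconjecture}.
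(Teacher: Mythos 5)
Your proposal is correct and follows essentially the same route as the paper, which deduces the corollary directly from Theorem \ref{semisimpleconjecture} with the sole remark that uniform algebras are unital semisimple commutative Banach algebras; your added care about identifying $\rr(\cdot)$ with $\|\cdot\|_{\infty(X)}$, $\mcl \A$ with $\A$, and $\Ch(\mcl \A)$ with $\Ch(\A)$ is exactly the (standard) content implicit in that remark. You are also right that the occurrences of $T$ in the statement are misprints for $S$.
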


\begin{cor}\label{spec}
Let $A$ be a unital semisimple commutative Banach algebra and 
$B$ a unital commutative Banach algebra. Suppose that $S$ is a map from 
$A$ onto $B$ which satisfies that there exists a non-zero complex number 
$\alpha$ such that 
\[
\sigma_{\pi}(SfSg-\alpha)\cap \sigma_{\pi}(fg-\alpha)\ne \emptyset
\]
holds for every pair $f$ and $g$ in $A$. Then $B$ is semisimple and 
there exists a homeomorphism $\phi$ from $\Ch (\mcl B)$ onto $\Ch (\mcl A)$ 
such that the equality 
\[
Sf(y)=S1(y)f(\phi (y)),\quad y \in \Ch (\mcl B)
\]
holds for every $f\in A$. 
\end{cor}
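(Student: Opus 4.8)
The plan is to deduce the statement from Theorem \ref{semisimpleconjecture} together with the extra rigidity carried by the \emph{intersection} hypothesis, following the pattern of the proof of Corollary \ref{cor3.7}. First observe that if two nonempty peripheral spectra meet, then they have the same spectral radius; hence the hypothesis gives $\rr(SfSg-\alpha)=\rr(fg-\alpha)$ for all $f,g\in A$, and Theorem \ref{semisimpleconjecture} applies. It yields that $B$ is semisimple, that $SA^{-1}=B^{-1}$ (this is shown inside the proof of that theorem), and that there are $\eta\in B$ with $\eta^2=1$, a homeomorphism $\phi\colon\Ch(\mcl B)\to\Ch(\mcl A)$ and a clopen $K\subset\Ch(\mcl B)$ with
\[
Sf(y)=\eta(y)\times
\begin{cases}
f(\phi(y)),& y\in K,\\
\dfrac{\alpha}{|\alpha|}\,\overline{f(\phi(y))},& y\in\Ch(\mcl B)\setminus K,
\end{cases}
\]
for every $f\in A$. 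Setting $f=1$ gives $\eta=S1$ on $K$, so the entire content of the corollary is the assertion $K=\Ch(\mcl B)$: once this is known the representation collapses to $Sf(y)=\eta(y)f(\phi(y))=S1(y)f(\phi(y))$ on $\Ch(\mcl B)$.

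To prove $K=\Ch(\mcl B)$ I would first reduce to the case $\alpha=1$ exactly as in the proof of Corollary \ref{cor3.7}: replace $S$ by $S_\beta f=\beta^{-1}S(\beta f)$ with $\beta^2=\alpha$, a surjection of $A$ onto $B$ for which $\sigma_\pi(S_\beta f S_\beta g-1)\cap\sigma_\pi(fg-1)\ne\emptyset$; proving the conclusion for $S_\beta$ and then setting $f=1$ in $Sf=\beta S_\beta(f/\beta)$ transports it back to $S$ with $S1$ in place of $S_\beta1$. So assume $\alpha=1$. Then $\eta=S1$ on all of $\Ch(\mcl B)$, $(S1)^2=1$, and $Sf(y)=S1(y)f(\phi(y))$ on $K$ while $Sf(y)=S1(y)\overline{f(\phi(y))}$ on $\Ch(\mcl B)\setminus K$. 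Suppose, for a contradiction, that there is a $y_0\in\Ch(\mcl B)\setminus K$. As in the proof of Theorem \ref{main}, the closures $\overline K$ and $\overline{\Ch(\mcl B)\setminus K}$ in $M_B$ are disjoint, so the peaking-function construction already used in the proof of Corollary \ref{cor3.7} produces a $U\in B^{-1}$ with $U(y_0)=i$, $|U|<\tfrac13$ on $K$ and $\mathrm{Im}\,U>0$ on $\Ch(\mcl B)$. Since $SA^{-1}=B^{-1}$, choose $u\in A^{-1}$ with $Su=U/S1$; because $(S1)^2=1$ this gives $U(y)=u(\phi(y))$ for $y\in K$ and $U(y)=\overline{u(\phi(y))}$ for $y\in\Ch(\mcl B)\setminus K$, and $SuS1=U$.

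Now the contradiction is extracted as in Corollary \ref{cor3.7}. From $\mathrm{Im}\,U>0$ on $\Ch(\mcl B)$ and $\sigma_\pi(SuS1-1)=\sigma_\pi(U-1)\subset(U-1)(\Ch(\mcl B))$ we get $\sigma_\pi(SuS1-1)\subset\{z\in\mathbb{C}:\mathrm{Im}\,z>0\}$. On the other hand $U(y_0)=i$ forces $u(\phi(y_0))=-i$, hence $\mathrm{Im}\,u<0$ on $\phi(\Ch(\mcl B)\setminus K)$ and $|u|<\tfrac13$ on $\phi(K)$; since $|(u-1)(\phi(y_0))|=\sqrt2$ while $|u-1|<\tfrac43<\sqrt2$ on $\phi(K)$, the peripheral spectrum of $u-1$, which is contained in $(u-1)(\Ch(\mcl A))=(u-1)(\phi(\Ch(\mcl B)))$, avoids $(u-1)(\phi(K))$ and therefore lies in $\{z\in\mathbb{C}:\mathrm{Im}\,z<0\}$. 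These two inclusions make $\sigma_\pi(SuS1-1)\cap\sigma_\pi(u-1)$ empty, contradicting the hypothesis applied with $f=u$, $g=1$. Hence $K=\Ch(\mcl B)$, which finishes the $\alpha=1$ case and, via the $S_\beta$ reduction, the general case. The only step that is not a direct transcription of earlier arguments is the construction of the separating invertible element $U$, and that is carried out precisely as in the proof of Corollary \ref{cor3.7}; everything else follows by replacing Corollary \ref{semisimplemain} there with Theorem \ref{semisimpleconjecture}.
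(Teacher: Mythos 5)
Your proposal is correct and follows essentially the same route as the paper: reduce the intersection hypothesis to $\rr(SfSg-\alpha)=\rr(fg-\alpha)$, invoke Theorem \ref{semisimpleconjecture}, and then eliminate the conjugate-linear part on $\Ch(\mcl B)\setminus K$ by the $\mathrm{Im}\,U>0$ peripheral-spectrum separation argument. The only organizational difference is that the paper obtains $K=\Ch(\mcl B)$ by applying Corollary \ref{cor3.7} to the restriction $S|_{A^{-1}}$ (after verifying $SA^{-1}=B^{-1}$ directly from the peripheral-spectrum hypothesis) and then extends the formula from $A^{-1}$ to $A$ via the real-linearity of $S$, whereas you start from the representation on all of $A$ and transcribe the same $U$-construction in place; both versions of the argument are sound.
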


\begin{proof}
Since
\[
\sigma_{\pi}(SfSg-\alpha)\cap \sigma_{\pi}(fg-\alpha)\ne \emptyset, 
\]
holds for every $f,g\in A$, then
\[
\rr(SfSg-\alpha)=\rr(fg-\alpha)
\]
holds for every $f,g\in A$. Then by Theorem \ref{semisimpleconjecture} 
we see that 
$B$ is semisimple and 
$S$ is real-linear. 
On the other hand we see that 
$SA^{-1}=B^{-1}$. Suppose that $f\in A^{-1}$. Then 
\[
\sigma_{\pi}(f(\alpha f^{-1})-\alpha )=\{0\},
\]
so
\[
\sigma_{\pi}(SfS(\alpha f^{-1})-\alpha )\supset \{0\}.
\]
It follows that 
\[
\sigma_{\pi}(SfS(\alpha f^{-1})-\alpha )= \{0\}.
\]
We see that $Sf\in B^{-1}$. Thus we see that $SA^{-1}\subset B^{-1}$. 
Suppose conversely that $F\in B^{-1}$. then there exist an $f$ and a 
$g$ in $A$ with $Sf=F$ and $Sg=\alpha F^{-1}$. 
Then 
\[
\sigma_{\pi}(SfSg-\alpha)=\{0\},
\]
so
\[
\sigma_{\pi}(fg-\alpha)=\{0\}.
\]
We see that $f\in A^{-1}$. Thus we see that $B^{-1}\subset A^{-1}$, and 
thus $SA^{-1}=B^{-1}$. Then by Corollary \ref{cor3.7} 
there exists a homeomorphism $\phi$ from $\Ch (\mcl B)$ onto 
$\Ch (\mcl A)$ such that the equality
\begin{equation}\label{last}
Sf(y)=S1(y)f(\phi (y)),\quad y\in \Ch (\mcl B)
\end{equation}
holds for every $f\in A^{-1}$. We show that the equation 
(\ref{last}) holds for every $f\in A$. Let $f\in A$. Then there exist 
$f_0\in A^{-1}$ and a complex number $\mu$ with 
$f=f_0+\mu$. If $\mu=0$, then
\[
Sf(y)=Sf_0(y)=S1(y)f(\phi (y))=S1(y)f(\phi (y)), \quad y\in \Ch (\mcl B)
\]
holds. If $\mu \ne 0$, then, 
the by real-linearity of $S$ and 
the equation 
(\ref{last}) we have
\[
Sf(y)=Sf_0(y)+S\lambda (y)=S1(y)f_0(\phi (y))+S1(y)\lambda 
=S1(y)f(\phi (y))
\]
holds for every $y\in \Ch (\mcl B)$. 
\end{proof}

The authors do not know if a corresponding result for $\alpha =0$
holds.

\end{document}